\def\maketag@@@#1{\hbox{\m@th\normalfont\normalsize#1}}
\newtheorem*{thm-no-num}{Theorem}
\newtheorem*{df-no-num}{Definition}
\newtheorem{thm}{Theorem} [section]
\newtheorem{prop}[thm]{Proposition} 
\newtheorem{lm}[thm]{Lemma} 
\newtheorem{cor}[thm]{Corollary} 
\theoremstyle{remark}
\newtheorem{rmk}[thm]{Remark}
\newtheorem{ex}[thm]{Example}
\newcommand{\bA}{\mathbb{A}}
\newcommand{\Brm}{{\rm B}}
\newcommand{\PP}{\mathbb{P}}
\newcommand{\ZZ}{\mathbb{Z}}
\newcommand{\cl}[1]{\mathcal{#1}}
\newcommand*{\sheafhom}{\mathcal{H}\kern -.5pt om}
\newcommand{\Ccal}{\cl{C}}
\newcommand{\Mcal}{\cl{M}}
\newcommand{\Ocal}{\cl{O}}
\newcommand{\Scal}{\cl{S}}
\newcommand{\Hcal}{\cl{H}}
\newcommand{\Lcal}{\cl{L}}
\newcommand{\Xcal}{\cl{X}}
\newcommand{\Ycal}{\cl{Y}}
\newcommand{\Gcal}{\cl{G}}
\newcommand{\Pcal}{\cl{P}}
\newcommand{\Gm}{\mathbb{G}_{\rm m}}
\newcommand{\GLt}{\textnormal{GL}_3}
\newcommand{\PGLt}{\textnormal{PGL}_3}
\newcommand{\bfk}{\mathbf{k}}
\newcommand{\pr}{{\rm pr}}
\newcommand{\sm}{_{\rm sm}}
\renewcommand{\H}{{\rm H}}
\newcommand{\K}{{\rm K}}
\newcommand{\M}{{\rm M}}
\newcommand{\Inv}{{\rm Inv}^{\bullet}}
\newcommand{\Spec}{{\rm Spec}}
\newcommand{\irr}{\textnormal{irr}}
\newcommand{\binod}{\textnormal{bin}}
\newcommand{\fr}{\textnormal{fr}}
\begin{document}
	\title[Brauer groups of $\Mcal_3$, $\mathcal{A}_3$ and moduli of plane curves]{The Brauer groups of moduli of genus three curves, abelian threefolds and plane curves}
	\author[A. Di Lorenzo]{Andrea Di Lorenzo}
	\address{Università di Pisa, Dipartimento di matematica, Largo Bruno Pontecorvo 5, 56127 Pisa, Italy}
	\email{andrea.dilorenzo@unipi.it}
	\author[R. Pirisi]{Roberto Pirisi}
	\address{Università degli Studi di Napoli Federico II, Dipartimento di matematica e applicazioni R. Caccioppoli, Via Cintia, Monte S. Angelo I-80126, Napoli, Italy}
	\email{roberto.pirisi@unina.it}	
	\date{\today}
\begin{abstract}
We compute the $\ell$-primary torsion of the Brauer group of the moduli stack of smooth curves of genus three over any field of characteristic different from two and the Brauer group of the moduli stacks of smooth plane curves of degree $d$ over any algebraically closed field of characteristic different from two, three and coprime to $d$. We achieve this result by computing the low degree cohomological invariants of these stacks. As a corollary we are additionally able to compute the $\ell$-primary torsion of the Brauer group of the moduli stack of principally polarized abelian varieties of dimension three over any field of characteristic different from two.
\end{abstract}
\maketitle

\section*{Introduction}

\subsection*{Brauer groups of moduli problems}
Moduli stacks of curves are among the most studied objects in algebraic geometry. The problem of computing invariants of $\Mcal_g$ has attracted many mathematicians in the last sixty years: for instance, the Picard group of $\Mcal_g$ has been the subject of a number of papers, such as \cites{MumPic, ArbCor87, FulOl, FriViv}, and the same can be said for its rational Chow ring (see for instance \cites{MumEnum, Fab, Iza, CL}).

The Brauer group is a widely studied invariant in number theory, dating back to work of Brauer and Noether on the Brauer group of a field. It was later generalized to algebraic varieties and schemes, and up to the vast generality of Topoi, in Grothendieck's works. Therefore, one can naturally ask what the Brauer group of $\Mcal_g$ is, that is in which ways we can functorially associate to any smooth curve  $C \to S$ of genus $g$ an element $\phi(C) \in {\rm Br}(S)$.

So far, we know the following:
\begin{itemize}
    \item[$g=1$:] Using classical methods, Antieau and Meier \cite{AntMeiEll} computed the Brauer group of $\Mcal_{1,1}$ over a variety of bases, most notably the integers. Following this Shin \cite{Shi} computed the group over all finite or algebraically closed fields. The authors then introduced new techniques to extend the computation to arbitrary fields \cite{DilPirPositive}.
    \item[$g=2$:] The authors computed (up to $p$-primary torsion) the Brauer group of the stack $\Hcal_g$ of hyperelliptic curves of genus $g$ and its compactifications over any field of characteristic different from $2$ \cites{DilPirBr, DilPirRS}, thus obtaining a presentation for the Brauer group of $\Mcal_2$.
    \item[$g\geq 4$:] When the base field is the complex numbers, one can consider the analytic Brauer group, which can be computed through topological methods and is related to the regular Brauer group; in particular, putting together works of Arbarello-Cornalba \cite{ArbCor87}, Korkmaz-Stipsicz \cite{KorSt03} and Schr\"oer \cite{Sch05}, we get that over the complex numbers the Brauer group of $\Mcal_g$ is zero for $g \geq 4$ (see for example \cite{FriPirBrauer}*{Theorem 4.1} for how to derive the result).
\end{itemize}
To the authors' knowledge all that is known for $g=3$, using \cite{KorSt03}*{Theorem 1.2} in the same way as for $g\geq 4$, is that over $\mathbb{C}$ the Brauer group of $\Mcal_3$ is contained in $\ZZ/2\ZZ$. This paper's main aim is to close this gap:
\begin{thm-no-num}
Let $\bfk$ be a field of characteristic different from $2$, and let $\Mcal_3$ be the moduli stack of smooth genus three curves over $\bfk$. 
\begin{itemize}
    \item If ${\rm char}(\bfk)=0$ the Brauer group of $\Mcal_3$ is isomorphic to ${\rm Br}(\bfk) \oplus \ZZ/2\ZZ$. 
    \item If ${\rm char}(\bfk)=p>2$ then ${\rm Br}(\Mcal_3)={\rm Br}(\bfk) \oplus \ZZ/2\ZZ \oplus B_p$ where $B_p$ is a $p$-primary torsion subgroup.
\end{itemize}
\end{thm-no-num}
In particular, our theorem shows that even over $\mathbb{C}$ the Brauer group of $\Mcal_3$ is not trivial, differently from what happens for $g\geq 4$. 

Interestingly, the $\ZZ/2\ZZ$ factor in the presentation above is related to the theory of theta-characteristics of genus three curves, which is in turn related to two famous enumerative results: the $28$ bitangents of a smooth quartic and the $27$ lines over a smooth cubic surface.

As a relatively straightforward corollary of our result, we show that the same statement holds for the moduli stack of three dimensional principally polarized abelian varieties.

\begin{thm-no-num}
Let $\bfk$ be a field of characteristic different from $2$, and let $\mathcal{A}_3$ be the moduli stack of three dimensional principally polarized abelian varieties over $\bfk$. 
\begin{itemize}
    \item If ${\rm char}(\bfk)=0$ the Brauer group of $\mathcal{A}_3$ is isomorphic to ${\rm Br}(\bfk) \oplus \ZZ/2\ZZ$. 
    \item If ${\rm char}(\bfk)=p>2$ then ${\rm Br}(\mathcal{A}_3)={\rm Br}(\bfk) \oplus \ZZ/2\ZZ \oplus B'_p$ where $B'_p$ is a $p$-primary torsion subgroup.
\end{itemize}
\end{thm-no-num}

Moreover, let $\Xcal_{d}$ be the moduli stack parametrizing smooth plane curves of degree $d$. As an additional application of our techniques we compute, up to $p$-primary torsion, the Brauer group of $\Xcal_d$ over an algebraically closed field of characteristic different from $2,3$ and coprime to $d$.

\begin{thm-no-num}
Let $\bfk$ be an algebraically closed field of characteristic different from $2,3$ and coprime to $d$, and let $\Xcal_d$ be the moduli stack of smooth plane curves of degree $d$ over $\bfk$. Then:
\begin{itemize}
    \item If ${\rm char}(\bfk)=0$ the Brauer group of $\Xcal_d$ is isomorphic to $\ZZ/\langle d,6 \rangle$. 
    \item If ${\rm char}(\bfk)=p>0$ then ${\rm Br}(\Xcal_d)$ is isomorphic to $\ZZ/\langle d,6\rangle \oplus B_{d,p}$ where $B_{d,p}$ is a $p$-primary torsion subgroup.
\end{itemize}
\end{thm-no-num}

\subsection*{Cohomological invariants}

Our main tool will be the theory of cohomological invariants for algebraic stacks, first introduced by the second author in \cite{PirAlgStack} and expanded to the generality we use here by the authors in \cite{DilPirBr}; we should note that an even more general definition is given in \cite{DilPirPositive} to consider $p$-torsion coefficients in characteristic $p$, but it will not be needed here. 

Classically, cohomological invariants resemble characteristic classes: as codified in \cite{GMS}, they are considered for an algebraic group $G/\bfk$ and consist of ways to functorially assign a class $\alpha(E) \in \M^{\bullet}(F)$ to each $G$-torsor $E$ over a field $F$, where ${\rm M}$ is a cycle module (the main examples of Rost's cycle modules \cite{Rost} are Milnor's $\K$-theory and twisted Galois cohomology $\H^i_{D}(F)=\H^i_{\textnormal{Gal}}(F,D(i))$). As $G$-torsors over $F$ correspond to $F$-valued points of the classifying stack $\Brm G$, one can think of cohomolgical invariants as invariants of the classifying stack $\Brm G$ rather than the group. 

The idea can be readily extended to general algebraic stacks \cite{DilPirBr}*{Definition 2.3} by defining a cohomological invariant as a functorial assignment of an element $\alpha(P) \in \M(\bfk(P))$ to each point of $\Xcal$ (up to isomorphism), subject to a \emph{continuity condition} that is automatically satisfied in the classical case. When $\Xcal=\Brm G$ this definition agrees with the classical one, while when $\Xcal=X$ is a scheme we retrieve another known invariant, unramified cohomology. The cohomological invariants of $\Xcal$ with coefficients in $\M$ form a graded group $\Inv(\Xcal,\M)$ and when $\Xcal$ is a smooth quotient stack we get the equality
\[
{\rm Inv}^2(\Xcal, {\rm H}_{\mu_\ell^{\vee}})={\rm Br}'(\Xcal)_{\ell}
\]
where ${\rm H}^i_{\mu_\ell^{\vee}}(F)={\rm H}^i_{\textnormal{Gal}}(F,\ZZ/\ell\ZZ(i-1))$ denotes $(-1)$-twisted Galois cohomology and ${\rm Br}'(\Xcal)_\ell$ is the $\ell$-torsion of the cohomological Brauer group of $\Xcal$. Our result on the Brauer group of $\Mcal_3$ is in fact a consequence of the computation of the low degree cohomological invariants of $\Mcal_3$:

\begin{thm-no-num}
Let $\bfk$ be a field, let $\ell$ be a positive integer coprime to ${\rm char}(\bfk)$ and let $\M$ be a $\ell$-torsion cycle module. Then 
\[
{\rm Inv}^{\leq 2}(\Mcal_3,\M) = \M^{\leq 2}(\bfk) \oplus \alpha_2\cdot \M^{0}(\bfk)_2
\]
where $\alpha_2 \in {\rm Inv}^2(\Xcal, \K_2)$ is obtained on the open subset $\Mcal_3 \smallsetminus \Hcal_3$ as a pullback of the second Galois-Stiefel-Whitney class $\alpha_2 \in {\rm Inv}^2({\rm BS}_{28},\K_2)$.
\end{thm-no-num}

Our computation of the Brauer group of the stack $\Xcal_d$ of smooth plane curves of degree $d$, on the other hand, depends on the computation of the low degree cohomological invariants of the stack $\Xcal_d^{\textnormal{fr}}$ of \emph{framed} smooth plane curves of degree $d$, which is a $\Gm$-gerbe over $\Xcal_d$.

\begin{thm-no-num}
Let $\bfk$ be an algebraically closed field of characteristic different from $2$ and coprime to $d$, and let $\ell$ be an even positive integer that's not divisible by ${\rm char}(\bfk)$. Finally let $i_d$ be equal to $1$ if $3$ divides $d$, and $0$ otherwise. Then
\[{\rm Inv}^{\leq 2}(\Xcal_d^{\textnormal{fr}},\K_\ell)=\ZZ/\ell\ZZ \oplus \ZZ/\langle\ell,i_d(d-1)^2 \rangle\left[1\right] \oplus \ZZ/\langle 2,d \rangle\left[2\right].\] 
\end{thm-no-num}

\subsection*{Notation}

Throughout the paper every scheme and stack will be assumed to be of finite type (or a localization thereof) over a field $\bfk$ of characteristic different from $2$. Moreover when working with the stacks of degree $d$ plane curves we will always assume that ${\rm char}(\bfk)$ does not divide $d$. 

With the letter $\ell$ we will always mean a positive integer coprime to the characteristic of $\bfk$. If $A$ is an abelian group, by $A_r$ we will always mean the $r$-torsion subgroup of $A$. If $A^{\bullet}$ is a graded group we write $A^{\bullet}\left[ s \right]$ for the graded group obtained by shifting $A^{\bullet}$ up in degree by $s$. We write $A^{\leq d}$ for the subgrooup of elements of degree at most $d$.

With the notation ${\rm H}^i(\Xcal,A)$ we always mean \'etale cohomology with coefficients in $A$, or lisse-\'etale if $\Xcal$ is not a Deligne-Mumford stack. If $R$ is a $\bfk$-algebra we will often write ${\rm H}^i(R,A)$ for ${\rm H}^i(\Spec(R),A)$. In particular for a field $F/\bfk$ we have ${\rm H}^i(F,A)={\rm H}^i_{\textnormal{\'et}}(\Spec(F),A)={\rm H}_{\textnormal{Gal}}^i(F,A)$.

\subsection*{Acknowledgements} We wish to thank the referee for a thorough review containing a number of very helpful suggestions.

\section{Preliminaries}
In this section, we recall the theory of (equivariant) Chow groups with coefficients in a cycle module (\Cref{sec:cycle modules} and \Cref{sec:equi chow}). We then give a brief overview of the theory of cohomological invariants (\Cref{sec:coh}) and explain its connection to Chow groups with coefficients.
\subsection{Cycle modules}\label{sec:cycle modules}

Let $F$ be a field, and let ${\rm T}^{\bullet}(F)=\oplus_{i} (F^*)^{\otimes i}$ be the tensor algebra over $F$. We will write $\lbrace a_1, \ldots, a_n \rbrace$ for the element $a_1 \otimes \ldots \otimes a_n \in {\rm T}^{\bullet}(F)$, so that the multiplication reads $\lbrace a_1, \ldots, a_r \rbrace \cdot \lbrace b_1, \ldots, b_d \rbrace = \lbrace a_1, \ldots, a_r, b_1, \ldots, b_d \rbrace$.

Milnor's K-theory ring of $F$ is the graded ring $\K^{\bullet}_{\M}(F)={\rm T}^{\bullet}(F)/I$, where $I$ is the bilateral ideal generated by elements $\lbrace a_1, \ldots, a_r \rbrace$ such that $a_1 + \ldots + a_r = 1$. Some properties of $\K_{\rm Mil}^{\bullet}(F)$ we'll need are:
\begin{itemize}
    \item $\K_{\rm Mil}^{\bullet}(F)$ is graded-commutative, i.e. \[\lbrace a_1, \ldots , a_r, b_1, \ldots, b_d \rbrace = (-1)^{rd} \lbrace b_1, \ldots, b_d, a_1, \ldots , a_r \rbrace.\]
    \item $\lbrace a_1, \ldots, a_n \rbrace$ is zero if $a_1 + \ldots + a_n = 0$.
    \item $\lbrace a, a \rbrace = \lbrace -1, a \rbrace.$
\end{itemize}
Crucially, Milnor's K-theory induces a functor from the category of finitely generated field extensions of $\bfk$, and it has a series of important functorial properties
\begin{enumerate}
    \item For any finitely generated extension $f:F \to F'$ a map 
    \[f^*:\K^{\bullet}_{\M}(F) \to \K^{\bullet}_{\M}(F'), \quad f^*\lbrace a \rbrace = \lbrace f(a) \rbrace.\]
    \item For any finite extension $f:F \to F'$ a map 
    \[f_*:\K^{\bullet}_{\M}(F') \to \K^{\bullet}_{\M}(F), \quad f_*\lbrace a \rbrace = \lbrace {\rm N}^{F'}_F(a) \rbrace.\]
    \item For a DVR $(R,v)$ with quotient field $\bfk(R)$ and residue field $\bfk_v$ a ramification map
    \[\partial_v:\K^{\bullet}_{\M}(\bfk(R)) \to \K^{\bullet}_{\M}(\bfk_v), \quad \partial_v(\lbrace \pi, a_1, \ldots, a_r \rbrace)=\lbrace \overline{a_1}, \ldots, \overline{a_r} \rbrace \]
    where $\pi$ is a uniformizer for $R$, which lowers degree by one.
\end{enumerate}
The maps above satisfy the expected functorial compatibilities and a projection formula $f_*(f^*(\lbrace a_1, \ldots a_r\rbrace))={\rm deg}(F'/F)\lbrace a_1, \ldots a_r\rbrace$. Moreover we have the following exact sequence for Milnor's K-theory of a purely transcendental extension
\[0 \to \K_{\rm Mil}^{\bullet}(F) \to \K_{\rm Mil}^{\bullet}(F(t)) \xrightarrow{\partial} \bigoplus_{x \in \bA^1_F} \K_{\rm Mil}^{\bullet}(\bfk(x)) \to 0 \]
where the map $\partial$ is given by the sum of all ramification maps coming from points of codimension one in $\bA^1_F$.

A \emph{cycle module} $\M$ is a functor from the category $(\mathcal{F}/\bfk)$ of finitely generated extensions of $\bfk$ to graded groups, such that $\M^{\bullet}(F)$ is a graded $\K^{\bullet}_{\M}(F)$-module, that is equipped with all of the maps and properties above and compatible with the respective maps and properties for Milnor's K-theory. The crucial examples to keep in mind are
\begin{itemize}
    \item $\K_{\ell}^{\bullet}(F) = \K_{\rm Mil}^{\bullet}(F)/\ell$ where $\ell$ is a positive integer coprime to ${\rm char}(\bfk)$.
    \item $\H_D^{\bullet}(F) = \oplus_i \H^i(F,D(i))$ where $D$ is a $\ell$-torsion Galois module over $\bfk$ and $D(i)=D\otimes \mu_\ell^{\otimes i}$.
\end{itemize}
The norm-residue isomorphism shows that $\K_\ell = \H_{\ZZ/\ell\ZZ}$, which also explains the $\K_{\rm Mil}$-module structure of $\H_{D}$ as coming from cup product.

\begin{rmk}\label{rmk:ellproduct}
Note that if $x \in \M^{\bullet}(F)$ is an $\ell$-torsion element and $\alpha \in \K^{\bullet}(F)$ then the product $\alpha \cdot x$ only depends on the class of $\alpha$ in $\K^{\bullet}_\ell(F)$. This shows that we have a well defined product
\[\K^{\bullet}_\ell (F) \times \M^{\bullet}(F)_\ell \to \M^{\bullet}(F)_{\ell}, \quad (\left[\alpha\right],x) \mapsto \alpha \cdot x\]
where $\alpha \in \K^{\bullet}(F)$ is any representative of $\left[ \alpha \right] \in \K^{\bullet}_{\ell}(F)$.
\end{rmk}

\subsection{Equivariant Chow groups with coefficients}\label{sec:equi chow}

Chow groups with coefficients were introduced by Rost \cite{Rost} and their equivariant counterpart was first used by Guillot \cite{Guil}, adapting ideas by Edidin-Graham and Totaro \cites{EG,Tot} to compute classical cohomological invariants. 

Let $X$ be a scheme of finite type over $\bfk$, and define 
\[C_i(X,\M)=\bigoplus_{V \subset X, {\rm dim}(V)=i} \M^{\bullet}(\bfk(V)) \]
where the sum is over all irreducible subvarieties of dimension $i$ of $X$. We have a complex
\[0 \leftarrow C_0(X,\M) \xleftarrow{\partial_1} C_1(X,\M) \xleftarrow{\partial_2} \ldots \xleftarrow{\partial_{{\rm dim}(X)}} C_{{\rm dim}(X)}(X,\M) \leftarrow 0 \]
where the differential is obtained from the ramification map $\partial_v$ we described earlier. We define the $i$-th dimensional Chow group with coefficients of $X$ as
\[{\rm A}_i(X, \M)= {\rm Ker}(\partial_i)/{\rm Im}(\partial_{i+1}). \]
If $X$ is equidimensional we can also order by codimension and define ${\rm A}^i(X,\M)={\rm A}_{{\rm dim}(X)-i}(X,\M)$.

The groups ${\rm A}_*(X,\M)$ and ${\rm A}^*(X,\M)$ are bigraded, with the two degrees coming from dimension (resp. codimension) and the grading of $\M$. We will always refer to the first grading as dimension or codimension and to the second grading as degree. When $\M=\K_{\rm Mil}$ the zero degree part of ${\rm A}_i(X,\K_{\rm Mil})$ retrieves the usual Chow group ${\rm CH}_i(X)$ and when $\M=\K_{\ell}$ the zero degree part of ${\rm A}_i(X,\K_\ell)$ is equal to ${\rm CH}_i(X)/\ell$.

Often we will need to keep track of a fixed degree component. To do that write
\[C_i(X,\M^d)=\bigoplus_{{\rm dim}(V)=i} \M^{d}(\bfk(V)),\quad \partial_i^d: C_i(X,\M^d) \to C_{i-1}(X,\M^{d-1})\]
so that we can define the quotient
\[{\rm A}_i(X, \M^d)= {\rm Ker}(\partial^d_i)/{\rm Im}(\partial_{i+1}^{d+1}) \]
as the component of dimension $i$ and degree $d$ of ${\rm A}_\bullet(X,\M)$. Correspondigly we define ${\rm A}^i(X,\M^d)={\rm A}_{{\rm dim}(X)-i}(X,\M^d)$. We will also often use the notation
\[ {\rm A}^i(X,\M^{\leq d}) = \bigoplus_{d' \leq d} {\rm A}^i(X,\M^{d'}).\]

Chow groups with coefficients enjoy all the properties of Chow groups and more. In the following list we'll often use codimensional notation because it's the notation that will most often appear in the rest of the paper.

\begin{enumerate}[label=\Alph*)]
    \item Given a morphism $f:Y \to X$ there's a functorial pullback map $f^i:{\rm A}^i(X,\M) \to {\rm A}^i(Y,\M)$ if $f$ is flat equidimensional or $X$ is smooth.
    \item Given a proper morphism $f:Y \to X$ there's a functorial pushforward map $f_*:{\rm A}_i(Y,\M) \to {\rm A}_i(X,\M)$.
    \item The pullback and pushforward map are compatible along cartesian diagrams.
    \item Given a closed immersion $i:V \to X$ of codimension $c$ with complement $U$ there's a ramification map $\partial_V:{\rm A}^*(U, \M) \to {\rm A}^*(V,\M)$ and a long exact sequence
\[
\ldots \to {\rm A}^{i}(X,\M) \to {\rm A}^{i}(U,\M) \xrightarrow{\partial_V} {\rm A}^{i-c+1}(V,\M) \xrightarrow{i_*} {\rm A}^{i+1}(X,\M) \to \ldots
\] 
    \item Given a vector bundle $E \to X$ there are Chern classes 
    \[c_i(E):{\rm A}^*(X, \M) \to {\rm A}^{*-i}(X,\M)\]
    satisfying the usal properties of Chern classes.
    \item Given a vector bundle $E \to X$ we have the projective bundle formula
    \[{\rm A}^{i}(\PP(E),\M)=\bigoplus_{r+s=i}c_1(\mathcal{O}_{\PP(E)}(-1))^r \cdot {\rm A}^{s}(X,\M).\]
    \item If $\M =\K_{\rm Mil}$ or $\M=\K_{\ell}$ and $X$ is smooth ${\rm A}^*(X,\M)$ forms a ring which is graded-commutative with respect to degree.
    \item If $X$ is smooth, then ${\rm A}^*(X, \M)$ is a ${\rm A}^*(X,\K_{\rm Mil})$-module. if moreover $\M$ is $\ell$-torsion then ${\rm A}^*(X, \M)$ is a ${\rm A}^*(X,\K_{\ell})$-module.
    \item The module structure above is compatible with pullbacks, pushfrowards and ramification maps. Moreover, there is a projection formula \[f_*(f^*(x)\cdot y)=x \cdot f_*(y).\]
\end{enumerate}

If our scheme is being acted upon by a smooth algebraic group $G/\bfk$ then we can form the equivariant Chow groups with coefficients by equivariant approximation, that is we take a representation $V$ of $G$ such that the action is free on an open subset $U$ whose complement has codimension greater than ${\rm dim}(X)-i$ and define
\[{\rm A}_{i}^G(X,\M)={\rm A}_{i+{\rm dim}(V)-{\rm dim}(G)}((X\times U)/G,\M).\]
Similarly if $X$ is equidimensional and the codimension of the complement of $U$ is greater than $i$ we define
\[{\rm A}_{G}^i(X,\M)={\rm A}^{i}((X\times U)/G,\M).\]
These groups do not depend on the choice of representation, and in fact do not even depend on the particular presentation we chose for $\Xcal=\left[X/G\right]$: if $\Xcal=\left[X/G\right]=\left[Y/G'\right]$ are two different presentations of the same stack as a quotient by a smooth algebraic group over $\bfk$ then we have 
\[
{\rm A}_i^{G}(X,\M)={\rm A}^{G'}_i(Y,\M)
\]
for all $i$. Thus we will sometimes write ${\rm A}_i(\Xcal,\M)$ or ${\rm A}^i(\Xcal,\M)$ when the chosen presentation is irrelevant.

Equivariant Chow groups with coefficients enjoy all the properties that ordinary Chow groups with coefficients do, as long as everything is $G$-equivariant.

\subsection{Cohomological invariants}\label{sec:coh}

In this subsection we will briefly recall the properties of cohomological invariants which will be needed throughout the paper. For an in-depth treatment of the subject the reader can refer to \cite{DilPirBr}*{Section 2}.

As mentioned in the introduction, given an algebraic stack $\Xcal/\bfk$ we consider \cite{DilPirBr}*{Definition 2.3} the functor 
\[{\rm Pt}_{\Xcal}: (\mathcal{F}/\bfk) \to ({\rm Set})\]
assigning to a finitely generated extension $F/\bfk$ the set $\Xcal(F)/\!\sim$ of $F$-valued points of $\Xcal$ modulo isomorphism. A cohomological invariant of $\Xcal$ will be a natural transformation
\[\alpha: {\rm Pt}_{\Xcal} \to \M\]
satisfying a \emph{continuity condition} which basically says that the natural transformation is well-behaved in regards to specializations. The cohomological invariants of $\Xcal$ with coefficients in $\M$ form a graded group $\Inv(\Xcal,\M)$. Cohomological invariants enjoy the following properties

\begin{enumerate}
    \item A morphism of algebraic stacks $f:\Ycal \to \Xcal$ induces a pullback map $f^*$ on cohomological invariants defined by $f^*(\alpha)(P) = \alpha(f(P))$.
    \item Cohomological invariants form a sheaf in the \emph{smooth-Nisnevich} topology, where coverings are smooth representable maps $\Xcal \to \Ycal$ such that any $F$-valued point of $\Ycal$ lifts to a $F$-valued point of $\Xcal$.
    \item In particular if $\M=\H_D$ and $\Xcal$ is smooth then cohomological invariants with coefficients in $\H_D$  are the sheafification of cohomology with coefficients in $D$ on the smooth-Nisnevich site of $\Xcal$.
    \item If $\Xcal$ is smooth over $\bfk$ an open immersion $\mathcal{U} \to \Xcal$ induces an injective map on cohomological invariants.
    \item If $\Xcal$ is smooth over $\bfk$ and $f:\Ycal \to \Xcal$ is either an affine bundle, the projectivization of a vector bundle or an open immersion whose complement has codimension $\geq 2$ then $f^*:\Inv(\Xcal, \M) \to \Inv(\Ycal, \M)$ is an isomorphism.
    \item If $\Xcal$ is smooth over $\bfk$ and $\Xcal_{\mathcal{D},\ell}$ is the order $\ell$ root stack of $\Xcal$ at an irreducible Cartier divisor $\mathcal{D}$ then the cohomological invariants of $\Xcal_{D,\ell}$ are the invariants of $\Xcal \smallsetminus \mathcal{D}$ whose ramification at $\mathcal{D}$ is of $\ell$-torsion.
    \item If $\Xcal=\left[ X/G\right]$ is the quotient of a smooth scheme by a smooth algebraic group over $\bfk$ then 
    \[
{\rm Inv}^1(\Xcal,\K_\ell)={\rm H}^1_{\textnormal{lis-\'et}}(\Xcal,\mu_\ell) \textnormal{ and } {\rm Inv}^2(\Xcal, {\rm H}_{\mu_\ell^{\vee}})={\rm Br}'(\Xcal)_{\ell}.
\]
    \item If $\Xcal=\left[ X/G\right]$ is the quotient of a smooth scheme by a smooth algebraic group over $\bfk$ then
    \[\Inv(\Xcal,\M)={\rm A}^0_{G}(X,\M).\]
\end{enumerate}
Therefore, computing cohomological invariants with values in a cycle module $\M$ of a smooth quotient stack $[X/G]$ with $G$ smooth over $\bfk$ is equivalent to computing the zero-codimension $G$-equivariant Chow group $A^0_G(X,\M)$. Cohomological invariants of a stack in turn give access to the Brauer group: this will be our main technical tool for the computation of the Brauer group of the moduli stacks of interest.

\section{Brauer groups of moduli of plane curves}
The main goal of this section is the computation of the Brauer group of $\Xcal_d$, the moduli stack of smooth plane curves of degree $d$, when the ground field is algebraically closed (\Cref{thm:brauer Xd}). This result is achieved via first computing the cohomological invariants of $\Xcal_d^\fr$, the moduli stack of \emph{framed} smooth plane curves (\Cref{prop: InvX4}). For this, we also need some preliminary results involving computations in the equivariant Chow groups, which are the focus of \Cref{sub:equi comp}.

Recall that, as mentioned in the notations, throughout the section we will assume that the characteristic of $\bfk$ is different from $2$ and does not divide $d$.
\subsection{Moduli stacks of smooth plane curves}\label{sub:plane curves}
The moduli stack $\Xcal_d$ of smooth plane curves of degree $d$ is the stack whose objects are given by the data 
\[(C\overset{\iota}{\hookrightarrow} P \to S)\]
where $P\to S$ is a Severi--Brauer variety whose fibers have dimension two (sometimes these varieties are called $\PP^2$-bundles), the scheme $C\to S$ is a smooth and proper morphism whose fibers have dimension one, and $\iota$ is a closed embedding whose restriction over any geometric point $s$ of $S$ realizes $C_s$ as a smooth plane curve of degree $d$ in $P_s\simeq \PP^2_s$. Recall from our notations that we will always assume that the characteristic of our base field $\bfk$ does not divide $d$.

Let $W_d$ be the vector space of trinary forms of degree $d$, so that $\PP(W_d)$ can be identified with the Hilbert scheme of plane curves of degree $d$ in $\PP^2$. Let $Z\subset\PP(W_d)$ be the divisor of singular forms. Then the moduli stack $\Xcal_d$ admits the following presentation as a quotient stack:
\[\Xcal_d \simeq [ \PP(W_d)\smallsetminus Z / \PGLt ],\]
where the action of $\PGLt$ on $\PP(W_d)$ is inherited from the natural action of $\GLt$ on $W_d$.

A related stack, useful for our computations, is the stack $\Xcal_d^\fr$ of \emph{framed} smooth plane curves of degree $d$. Its objects are pairs 
\[(E\to S,C\overset{\iota}{\hookrightarrow} \PP(E) \to S)\]
where $E\to S$ is a vector bundle of rank three, and $\iota$ is a closed embedding that over every geometric point $s$ of $S$ embeds $C_s$ in $\PP(E)_s\simeq\PP^2_s$ as a smooth plane curve of degree $d$. This stacks admits the following presentation:
\[ \Xcal_d^\fr := [(\PP(W_d)\smallsetminus Z)/\GLt], \]
where $\PP(W_d)$ inherits the $\GLt$-action from $W_d$.
\begin{rmk}
    The stack of framed smooth plane curves is of additional interest for us because, by \Cref{prop:presentation M3 minus H3}, we have that $\Mcal_3\smallsetminus\Hcal_3 \to \Xcal^{\fr}_4$ is a $\Gm$-torsor. This fact will allow us to compute the invariants of $\Mcal_3\smallsetminus\Hcal_3$ once those of $\Xcal^{\fr}_4$ are known.
\end{rmk}
The $\GLt$-invariant divisor $Z\subset \PP(W_d)$ can be further stratified. For our study, we will only deal with the following two $\GLt$-invariant, locally closed subschemes:
\begin{align*} 
Z_{\textnormal{irr}} &= \textnormal{subscheme of irreducible, singular curves of degree \textit{d}} \\
Z_{\binod} &= \textnormal{subscheme of irreducible curves of degree \textit{d} with exactly two nodes.}
\end{align*}
The subscheme $Z_{\textnormal{irr}}$ is open in $Z$, and $\overline{Z_{\binod}}$ has codimension one in $Z$. The divisor $Z\subset \PP(W_d)$ admits the following desingularization: let $\widetilde{Z}\subset \PP(W_d)\times \PP^2$ be the singular locus of the universal curve over $\PP(W_d)$. More explicitly, we have
\[ \widetilde{Z} = \left\{ ([f],p)\textnormal{ such that }\partial_{x_i} f (p) = 0,\textnormal{ for }i=0,1,2. \right\}\]
where $[f]$ is a point of $\PP(W_d)$ and $p$ is a point of $\PP^2$. 
\begin{prop}
The two projections
\[ \widetilde{Z} \longrightarrow \PP^2, \quad \widetilde{Z} \longrightarrow \PP(W_d) \]
realizes $\widetilde{Z}$ respectively as a projective bundle over $\PP^2$, and as a birational model of $Z$. In particular, the induced morphism $\widetilde{Z} \to Z$ is an isomorphism over $Z_{\textnormal{irr}}\smallsetminus Z_{\binod}$ and is \'{e}tale of degree two over $Z_{\binod}$.
\end{prop}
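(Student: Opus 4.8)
The plan is to study the two projections separately: the projection to $\PP^2$ pins down the geometry of $\widetilde{Z}$, while the projection to $\PP(W_d)$ lets one compare it with $Z$. For the first projection $q\colon\widetilde{Z}\to\PP^2$, I would realise $\widetilde{Z}$ as the projectivization of the kernel of the gradient map of vector bundles
\[\nabla\colon W_d\otimes\OO_{\PP^2}\longrightarrow \OO_{\PP^2}(d-1)^{\oplus 3},\qquad f\longmapsto(\partial_{x_0}f,\partial_{x_1}f,\partial_{x_2}f),\]
obtained by composing the three partial derivatives $W_d\to W_{d-1}$ with the tautological evaluation $W_{d-1}\otimes\OO_{\PP^2}\to\OO_{\PP^2}(d-1)$. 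By construction $\widetilde{Z}=\PP(\ker\nabla)$, so it suffices to check that $\nabla$ is a surjection of bundles, i.e. has fibrewise rank three. By $\GLt$-homogeneity of the action on $\PP^2$ it is enough to test this at $p=[1:0:0]$, where a direct expansion shows that $\partial_{x_0}f(p),\partial_{x_1}f(p),\partial_{x_2}f(p)$ compute, up to the scalar $d$ (invertible since $\mathrm{char}(\bfk)\nmid d$), three distinct coefficients of $f$ and are therefore linearly independent functionals on $W_d$. Euler's identity $\sum_i x_i\partial_{x_i}f=d\,f$ further forces $f(p)=0$ once the gradient vanishes at $p$, so the fibre of $q$ over $p$ is exactly the space of forms singular at $p$. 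Hence $\ker\nabla$ is a subbundle of rank $\dim W_d-3$ and $q$ exhibits $\widetilde{Z}$ as a projective bundle; in particular $\widetilde{Z}$ is smooth and irreducible of dimension $\dim\PP(W_d)-1$.

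For the second projection $\pi\colon\widetilde{Z}\to\PP(W_d)$, a point $([f],p)$ lies in $\widetilde{Z}$ precisely when $p$ is a singular point of $C_f=\{f=0\}$, so the set-theoretic image of $\pi$ is the singular-form locus $Z$, and the scheme-theoretic fibre over $[f]$ is the singular (Jacobian) subscheme $\mathrm{Sing}(C_f)$, which by Euler's identity coincides with $V(\partial_{x_0}f,\partial_{x_1}f,\partial_{x_2}f)$. Restricting over $Z_{\irr}$, the curves are reduced and hence have finite singular locus, so $\pi$ is finite there. I would then invoke the classical fact that a general singular plane curve has a single node; combined with the dimension count of the previous paragraph (both $\widetilde{Z}$ and $Z$ have dimension $\dim\PP(W_d)-1$), this shows $\pi$ is generically one-to-one, hence birational.

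Over the one-nodal locus the local model $xy=0$ of a node has Jacobian ideal $(x,y)$, so $\mathrm{Sing}(C_f)$ is a single reduced point whose location depends algebraically on $[f]$; thus $\pi$ is the graph of a regular map $[f]\mapsto p([f])$ and restricts to an isomorphism there, accounting for the isomorphism over $Z_{\irr}\smallsetminus Z_{\binod}$. The remaining and most delicate point, which I expect to be the main obstacle, is the behaviour over $Z_{\binod}$. By definition a curve in $Z_{\binod}$ has exactly two nodes, so set-theoretically the fibre consists of two distinct points, each a reduced point of $\mathrm{Sing}(C_f)$ by the node computation above; hence $\pi$ restricts to a finite morphism $\pi^{-1}(Z_{\binod})\to Z_{\binod}$ of degree two. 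To upgrade ``degree two'' to ``\'etale of degree two'' I would argue that $\pi^{-1}(Z_{\binod})$ is smooth, being open in the smooth $\widetilde{Z}$, and of the same dimension as $Z_{\binod}$, so by miracle flatness the finite map is flat; since each fibre is reduced of length two the map is unramified, hence \'etale. The two sheets correspond to the choice of one of the two nodes, and the monodromy exchanging them is exactly what will later produce the nontrivial invariants.

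The technical care concentrates in the last step: one must verify that $Z_{\binod}$ is smooth of the expected dimension (so that the equidimensional hypothesis of miracle flatness applies) and that the scheme-theoretic fibre $\mathrm{Sing}(C_f)$ really is reduced of length two along $Z_{\binod}$, i.e. that the two nodes remain distinct and each contributes a single reduced point. Both reduce to the local analysis of the node singularity $xy=0$ and a transversality statement for how nodes propagate in families, after which the \'etale degree-two conclusion follows formally.
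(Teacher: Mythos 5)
The paper itself offers no proof of this proposition (the projective-bundle claim is later attributed to \cite{FulVis}*{Proposition 4.2}), so your argument has to stand on its own. Its main lines are correct and standard: checking surjectivity of the gradient map of vector bundles at a single point of $\PP^2$ and spreading it out by $\GLt$-homogeneity, using Euler's identity together with ${\rm char}(\bfk)\nmid d$ to see that vanishing of the gradient at $p$ forces $f(p)=0$, and identifying the scheme-theoretic fibres of $\pi\colon\widetilde{Z}\to\PP(W_d)$ with the Jacobian subscheme so that the computation of the Jacobian ideal of a node gives reduced fibres. These are exactly the key inputs.

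There is, however, one step that is false as written: $\pi^{-1}(Z_{\binod})$ is \emph{not} open in $\widetilde{Z}$. Since $Z_{\binod}$ is locally closed of codimension one in $Z$ and $\pi$ is finite over $Z_{\irr}$, the preimage $\widetilde{Z}_{\binod}$ is locally closed of codimension one in $\widetilde{Z}$, so you cannot deduce its smoothness from that of $\widetilde{Z}$, and the miracle-flatness argument does not launch. The fix is cheap and avoids any smoothness or equidimensionality hypothesis on $Z_{\binod}$: over every point of $Z_{\binod}$ the scheme-theoretic fibre of $\pi$ is the singular subscheme of a curve whose only singularities are two nodes, hence is geometrically reduced of length two; a finite morphism with fibres of constant length over a reduced base has locally free pushforward and is therefore flat, and geometric reducedness of the fibres gives unramifiedness, so the map is \'etale of degree two. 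Separately, you should flag that the isomorphism statement cannot hold literally over all of $Z_{\irr}\smallsetminus Z_{\binod}$ as those loci are defined: that set still contains cuspidal curves (where the fibre is a non-reduced point of length two) and irreducible curves with three or more nodes. Your argument proves the isomorphism precisely over the locus of uninodal curves, which is what is true and what the paper actually uses; the discrepancy is an imprecision in the statement rather than in your proof, but it should be recorded rather than silently absorbed into the phrase ``accounting for the isomorphism over $Z_{\irr}\smallsetminus Z_{\binod}$.''
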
 
We will denote $\widetilde{Z}_\irr$ (respectively $\widetilde{Z}_\binod$) the preimage of $Z_\irr$ (respectively $Z_\binod$) along $\widetilde{Z} \to Z$. The closure of $\widetilde{Z}_\binod$ in $\widetilde{Z}$ will be denoted $\widetilde{Z}_\binod^*$.
\subsection{Computation of equivariant fundamental classes}\label{sub:equi comp}
To compute the (low degree) cohomological invariants of $\Xcal_d^\fr$, we will need explicit expressions of $[Z]_{\GLt}$ in ${\rm CH}^*_{\GLt}(\PP(W_d)\times\PP^2)$ and of $[\widetilde{Z}_\binod^*]_{\GLt}$ in ${\rm CH}^*_{\GLt}(\widetilde{Z})$. For a brief review of the basic tools of equivariant intersection theory that are used below, the reader can consult \cite{DilFulVis}*{Section 2}.
Recall that the $\GLt$-equivariant Chow ring of a point is 
\[{\rm CH}_{\GLt}(\Spec(\bfk)) \simeq \ZZ[c_1,c_2,c_3] \]
where $c_i$ is the $i^{\rm th}$ equivariant Chern class of the standard $\GLt$-representation.

Let $T\subset\GLt$ the torus of diagonal matrices. Then the $T$-equivariant Chow ring of a point is 
\[ {\rm CH}_T(\Spec(\bfk)) \simeq \ZZ[l_1,l_2,l_3]\]
where $l_i$ is the $T$-equivariant first Chern class of the rank one representation determined by the formula
$(u_1,u_2,u_3)\cdot x := u_i^{-1}x$.
With this choice of the generators, the embedding of equivariant Chow rings $${\rm CH}_{\GLt}(\Spec(\bfk)) \simeq \ZZ[c_1,c_2,c_3]\longrightarrow {\rm CH}_T(\Spec(\bfk))$$ is given by  $$c_1\mapsto -(l_1+l_2+l_3), \quad c_2\mapsto l_1l_2+l_1l_3+l_2l_3, \quad c_3\mapsto-l_1l_2l_3.$$
Therefore, when working with $T$-equivariant Chow rings, the classes $c_i$ denote the cycles above. 
%Beware that our notation here differs slightly from the one used in \cite{DilFulVis}*{Section 2}, where instead of $\ell_i$ those classes are called $l_i$.
\subsubsection{The fundamental class of $Z$}\label{sec:class Z}
The locus $\widetilde{Z}\subset \PP(W_d)\times \PP^2$ is a complete intersection of the three $T$-invariant hypersurfaces
\[ H_i := \{([f],p) \text{ such that }\partial_{x_i}f(p)=0\}, \quad i=1,2,3. \]
The $T$-equivariant fundamental class of $H_i$ is then \cite{EF}*{Lemma 2.4}
\[ [H_i]_T = (h+(d-1)t+l_i) \]
where $h$ (respectively $t$) is the hyperplane class of $\PP(W_d)$ (respectively $\PP^2$). It follows then that
\begin{align*}
    [\widetilde{Z}]_T&= \prod_{i=1}^{3}(h+(d-1)t+l_i) \\
    &= (d-1)^3t^2 + (d-1)^2t^2 (3h + l_1 + l_2 + l_3) + t\cdot p + q,
\end{align*}
where $p$ and $q$ are polynomials in $h$ and $l_i$. Using the fact that $\pr_1|_{\widetilde{Z}}\colon\widetilde{Z}\to Z$ is generically an isomorphism, together with the formulas 
$$\pr_{1*}(1)=\pr_{1*}(t)=0, \quad \pr_{1*}(t^2)=1,\quad \pr_{1*}(t^3)=-c_1 $$
we obtain
\begin{align*}
    [Z]_T &= \pr_{1*}[\widetilde{Z}]_T \\
    &= -(d-1)^3c_1 + (d-1)^2(3h-c_1) \\
    &= (d-1)^2(3h-dc_1).
\end{align*}
We have proven the following.
\begin{lm}\label{lm:class Z}
    The $\GLt$-equivariant fundamental class of $Z$ in the equivariant Chow ring of $\PP(W_4)$ is equal to
    \[ [Z]_{\GLt} = (d-1)^2(3h-dc_1).  \]
\end{lm}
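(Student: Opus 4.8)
The plan is to realize the class of $Z$ as a pushforward from the desingularization $\widetilde{Z}$, exploiting that $\widetilde{Z}$ is cut out transversally by three explicit hypersurfaces. I will work $T$-equivariantly throughout: since the restriction ${\rm CH}_{\GLt}(-)\hookrightarrow {\rm CH}_T(-)$ is injective, it suffices to compute $[Z]_T$, and the final expression — written purely in terms of $h$ and $c_1$ — will automatically lie in the image of the $\GLt$-equivariant ring, yielding the stated formula.

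First I would record the complete-intersection structure: $\widetilde{Z}=H_1\cap H_2\cap H_3$ inside $\PP(W_d)\times\PP^2$, where $H_i=\{\partial_{x_i}f(p)=0\}$ is $T$-invariant. The intersection is proper, since the fiber over each point $p\in\PP^2$ is the linear space of forms whose first partials all vanish at $p$; hence $[\widetilde{Z}]_T=\prod_{i=1}^3[H_i]_T$. The class of each $H_i$ is that of a section of bidegree $(1,d-1)$ twisted by the $i$-th torus character, namely $[H_i]_T=h+(d-1)t+l_i$ by \cite{EF}*{Lemma 2.4}. Expanding the product and grouping by powers of $t$ is purely formal, and the terms of $t$-degree $\leq 1$ will be irrelevant for the next step.

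Next I would push forward along $\pr_1\colon\PP(W_d)\times\PP^2\to\PP(W_d)$. Because the preceding proposition guarantees that $\pr_1|_{\widetilde{Z}}$ is birational onto $Z$, we have $\pr_{1*}[\widetilde{Z}]_T=[Z]_T$. The pushforward of the powers of $t$ is dictated by the projective bundle structure of the trivial $\PP^2$-bundle, i.e. by the defining relation of $t$ in ${\rm CH}_T(\PP^2)$ under the chosen weight normalization: concretely $\pr_{1*}(t^j)=0$ for $j\leq 1$, $\pr_{1*}(t^2)=1$ and $\pr_{1*}(t^3)=-c_1$. Only the $t^2$- and $t^3$-terms of the expansion survive, and substituting $l_1+l_2+l_3=-c_1$ collapses the result to $-(d-1)^3c_1+(d-1)^2(3h-c_1)=(d-1)^2(3h-dc_1)$.

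There is no deep obstacle here: the real content is imported from the geometric proposition (birationality of $\widetilde{Z}\to Z$) and from \cite{EF}. The only genuinely error-prone points are bookkeeping the torus weights $l_i$ in $[H_i]_T$ and pinning down the sign in $\pr_{1*}(t^3)=-c_1$, which is tied to the convention $(u_1,u_2,u_3)\cdot x:=u_i^{-1}x$ fixed above; a sign slip there would distort the coefficient of $c_1$ and hence the factor of $d$ in the answer.
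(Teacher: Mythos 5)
Your proposal is correct and follows essentially the same route as the paper: realize $\widetilde{Z}$ as the transversal intersection of the three hypersurfaces $H_i$ with $[H_i]_T=h+(d-1)t+l_i$, expand the product, and push forward along $\pr_1$ using $\pr_{1*}(t^2)=1$, $\pr_{1*}(t^3)=-c_1$ and the birationality of $\widetilde{Z}\to Z$. The arithmetic, including the substitution $l_1+l_2+l_3=-c_1$, matches the paper's computation exactly.
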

\subsubsection{The fundamental class of $\widetilde{Z}_\binod^*$}
This section is devoted to the computation of the $\GLt$-equivariant fundamental class of $\widetilde{Z}_\binod^*$ in ${\rm CH}_{\GLt}(\widetilde{Z})$. 

The $\GLt$-action on $\PP(W_d)\times\PP^2\times\PP^2$ can be used to define a $T$-action on the same scheme. We have
\[ {\rm CH}_{T}(\PP(W_4)\times\PP^2\times\PP^2) \simeq \ZZ[l_1,l_2,l_3,h,s,t]/I, \]
where $h$ (respectively $s$, $t$) is the hyperplane section of $\PP(W_d)$ (respectively of the first $\PP^2$, of the second $\PP^2)$.

For $i=1,2,3$, let $H_i\subset \PP(W_d)\times\PP^2\times\PP^2$ be the hypersurface of equation
\[ H_i:=\{([f],p,q) \text{ such that }\partial_{x_i}f(p)=0\},\]
where $x_1,x_2$ and $x_3$ are the variables of the trinary form $f$.
Similarly, we define the hypersurface $F_i \subset \PP(W_4) \times \PP^2 \times \PP^2$ as
\[ F_i:=\{([f],p,q) \text{ such that }\partial_{x_i}f(q)=0\}.\]
We have \cite{EF}*{Lemma 2.4}
\[ [H_i]_T = (h+(d-1)s+l_i),\quad [F_i]_T=(h+(d-1)t+l_i).\]
Set $Y=\left(\cap_{i=1}^{3} H_i\right)\cap\left(\cap_{i=1}^{3} F_i\right)$ to be the set-theoretic intersection, and let
\[ \xi := \prod_{i=1}^{3} (h+(d-1)s+l_i) \cdot \prod_{i=1}^{3}(h+(d-1)t+l_i) \]
be the intersection-theoretic product. Observe that $Y$ is supported on the union $B\cup R$, where
\begin{itemize}
    \item the scheme $B$ is the locus of triples $([f],p,q)$ such that $[f]$ belongs to $\overline{Z}_{\binod}$ and $p$, $q$ are singular points;
    \item the scheme $R$ is the locus of triples $([f],p,q)$ where $p=q$, or equivalently is the image of $\widetilde{Z}$ through the morphism ${\rm id}\times\delta:\PP(W_4)\times\PP^2\to \PP(W_4)\times\PP^2\times\PP^2$.
\end{itemize}
Therefore, as the intersection is transversal on $B$, we have $\xi=[B]_T+\rho$ where $\rho$ is a cycle supported on $R$. The cycle $\rho$ can be computed via a simple version of the residual intersection formula \cite{Ful}*{Proposition 9.11}. Consider the closed embedding 
\[ \iota: \widetilde{Z} \overset{j}{\longrightarrow} \PP(W_d) \times \PP^2 \overset{{\rm id}\times \delta}{\longrightarrow} \PP(W_d) \times \PP^2 \times \PP^2. \]
For what follows, we set the following notation:
\begin{itemize}
    \item the hyperplane class of $\widetilde{Z}$, regarded as a projective bundle over $\PP^2$, is denoted $h_{\widetilde{Z}}$;
    \item the hyperplane class of $\PP^2$, pulled back along $\widetilde{Z}\to\PP^2$, is denoted $u$.
\end{itemize}
With this notation, we see that $\iota^*h=h_{\widetilde{Z}}$ and $\iota^*s=\iota^*t=u$. Then the residual intersection formula tells us that
\begin{align*}
    \rho &= \iota_*\{ \prod_{i=1}^{3} c^T(N_{H_i}|_{\widetilde{Z}})\cdot \prod_{i=1}^{3} c^T(N_{F_i}|_{\widetilde{Z}}) \cdot c^T(N_{\iota})^{-1}\}_1 \\
    &= \iota_*(\sum_{i=1}^{3}( c^T_1(N_{H_i}|_{\widetilde{Z}})+c^T_1(N_{F_i}|_{\widetilde{Z}}))-c^T_1(N_{\iota})) \\
    &= \iota_*(2(\sum_{i=1}^{3} h_{\widetilde{Z}} + (d-1)u + l_i)-c_1^T(N_{\iota}))
\end{align*}
where $N_{\iota}$ is the normal bundle relative to the closed embedding $\iota:\widetilde{Z} \hookrightarrow \PP(W_4)\times\PP^2\times\PP^2$. Using the exact sequence of normal bundles
\[ 0 \longrightarrow N_{j} \longrightarrow N_{\iota} \longrightarrow j^*N_{{\rm id}\times\delta} \longrightarrow 0 \]
we obtain
\begin{align*}
     c_1^T(N_{\iota}) &= c_1^T(N_{j}) + j^*(c_1^T(N_{{\rm id}\times\delta}))\\
     &=(\sum_{i=1}^{3} h_{\widetilde{Z}} + (d-1)u + l_i) + 3u + l_1+l_2+l_3.
\end{align*}
In the last equality above we used the fact that for any smooth variety $X$, the diagonal embedding $\delta\colon X \hookrightarrow X\times X$ is a regular embedding and we have $N_{\delta}\simeq \Omega_X^\vee$, and $c_1^G(\Omega_X^\vee)= -c_1^G(\det(\Omega_X))$. This, applied to $X=\PP^2$, gives us $c_1^T(N_\delta)=-c_1^T(\omega_{\PP^2})=3u+l_1+l_2+l_3$.

From this we deduce
\begin{align*} 
\rho = \iota_*(3h_{\widetilde{Z}} + 3(d-2)u)  = \iota_*(\iota^*(3h + 3(d-2)s)) = (3h+3(d-2)s)\cdot [R]_T.
\end{align*}
This implies that 
\begin{align*} 
[B]_T &= \xi - \rho =  \prod_{i=1}^{3} (h+(d-1)s+l_i) \cdot \prod_{i=1}^{3}(h+(d-1)t+l_i) - (3h+3(d-2)s)\cdot [R]_T \\
&= \pr_{12}^*[\widetilde{Z}]_T\cdot ( \prod_{i=1}^{3}(h+(d-1)t+l_i)) - (3h+3(d-2)s)\cdot [R]_T,
\end{align*}
where $\pr_{12}:\PP(W_d)\times\PP^2\times\PP^2\to \PP(W_d)\times\PP^2$ is the projection on the first two factors. Observe that $\pr_{12*}([B]_T)=[\widetilde{Z}_\binod^* \subset \PP(W_d)\times \PP^2]_T$, i.e. the equivariant fundamental class of $\widetilde{Z}_\binod^*$ in the Chow ring of $\PP(W_d)\times\PP^2$. Using the previous computation of $[B]_T$, and applying the projection formula, we get
\begin{align*}
    \pr_{12*}[B]_T &= [\widetilde{Z}]_T\cdot \pr_{12*}(\prod_{i=1}^{3}(h+(d-1)t+l_i)) - (3h+3(d-2)s)\pr_{12*}(\iota_*[\widetilde{Z}]_T)\\
    &= [\widetilde{Z}]_T\cdot (\pr_{12*}(\prod_{i=1}^{3}(h+(d-1)t+l_i)) - (3h+3(d-2)s)).
\end{align*}
Observe now that we have $\pr_{12*}(1)=\pr_{12*}(t)=0$, $\pr_{12*}(t^2)=1$. Moreover, using the relation $t^3+c_1t^2+c_2t+c_3=0$, we deduce that $\pr_{12*}(t^3)=-c_1$. Therefore we get
\begin{align*}
     \pr_{12*}(\prod_{i=1}^{3}(h+(d-1)t&+l_i)) - (3h+3(d-2)s)= \\ 
     &=3((d-1)^2-1)h - ((d-1)^3+(d-1)^2)c_1 - 3(d-2)s.
\end{align*}
Let $[\widetilde{Z}_{\binod}^*]_T$ be the fundamental class of $\widetilde{Z}_\binod^*$ in the Chow ring of $\widetilde{Z}$, so that we have $[\widetilde{Z}_\binod^*]_T=p(h_{\widetilde{Z}},u,c_1)$. As the pullback homomorphism 
\[j^*: {\rm CH}_T^*(\PP(W_d)\times\PP^2) \longrightarrow {\rm CH}_T^*(\widetilde{Z}) \]
is surjective, it follows that
\[[\widetilde{Z}_\binod^* \subset \PP(W_d)\times\PP^2 ]_T = j_*[\widetilde{Z}_\binod^*]_T = p(h,s,c_1)\cdot [\widetilde{Z}]_T. \]
It is easy to check that there is no non-zero polynomial $q(h,s,c_1)$ of degree one such that $q\cdot [\widetilde{Z}]_T =0$: indeed, we have $[\widetilde{Z}]_T=h^3+f(h,s,c_1,c_2,c_3)$ (see \Cref{sec:class Z}), where $f$ has degree two with respect to the variable $h$. The subgroup of relations in ${\rm CH}_T^4(\PP(W_d)\times\PP^2)$ is generated by the cycles
\[ h(s^3+c_1s^2+c_2s+c_3),\quad s(s^3+c_1s^2+c_2s+c_3), \quad  l_i(s^3+c_1s^2+c_2s+c_3) \text{ for }i=1,2,3. \]
In particular, there is no relation in degree four containing a monomial divisible by $h^3$, from which follows that $q\cdot [\widetilde{Z}]_T=0$ if and only if $q=0$.

As we have proved that $$p(h,s,c_1)\cdot [\widetilde{Z}]_T = (3((d-1)^2-1)h - ((d-1)^3+(d-1)^2)c_1 - 3(d-2)s) \cdot [\widetilde{Z}]_T$$ where $p(h_{\widetilde{Z}},u,c_1)=[\widetilde{Z}_\binod^*]_T$, we finally deduce the following.
\begin{lm}\label{lm:class D}
    The $\GLt$-equivariant fundamental class of $\widetilde{Z}_\binod^*$ in the equivariant Chow ring of $\widetilde{Z}$ is equal to
    \[ [\widetilde{Z}_{\binod}^*]_{\GLt} = (3d(d-2)h_{\widetilde{Z}} - d(d-1)^2c_1 - 3(d-2)u). \]
\end{lm}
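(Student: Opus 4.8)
The plan is to work $T$-equivariantly, where $T\subset\GLt$ is the maximal torus of diagonal matrices, since the embedding ${\rm CH}_{\GLt}(\Spec(\bfk))\hookrightarrow{\rm CH}_T(\Spec(\bfk))$ lets one recover the $\GLt$-class from its $T$-refinement, while the $T$-equivariant rings are polynomial and therefore tractable. The geometric idea is that a binodal curve carries an \emph{ordered} pair of distinct singular points, so I would introduce two auxiliary copies of $\PP^2$ and work inside $\PP(W_d)\times\PP^2\times\PP^2$, cutting out the hypersurfaces $H_i$ (the $i$-th partial vanishes at the first marked point) and $F_i$ (the same at the second). The set-theoretic intersection $Y=\bigcap_i H_i\cap\bigcap_i F_i$ is then supported on the union of the ``honest'' binodal locus $B$, where $[f]$ is binodal and the two marked points are its singular points, and the diagonal locus $R$ where the two points collide, which is exactly the image of $\widetilde{Z}$ under ${\rm id}\times\delta$.

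First I would compute the intersection-theoretic product $\xi=\prod_i[H_i]_T\cdot\prod_i[F_i]_T$ using the divisor classes $[H_i]_T=h+(d-1)s+l_i$ and $[F_i]_T=h+(d-1)t+l_i$ from \cite{EF}*{Lemma 2.4}. As the intersection is transverse along $B$, one gets $\xi=[B]_T+\rho$ with $\rho$ an excess contribution supported on $R$. The key step is to isolate $\rho$ via the residual intersection formula \cite{Ful}*{Proposition 9.11}, which expresses it as the pushforward along $\iota\colon\widetilde{Z}\hookrightarrow\PP(W_d)\times\PP^2\times\PP^2$ of the degree-one part of the product of normal-bundle Chern classes, corrected by $c^T(N_\iota)^{-1}$. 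To evaluate $c_1^T(N_\iota)$ I would peel off the diagonal using the normal-bundle sequence $0\to N_j\to N_\iota\to j^*N_{{\rm id}\times\delta}\to0$ together with $N_\delta\simeq\Omega_{\PP^2}^\vee$, which contributes $c_1^T(N_\delta)=3u+l_1+l_2+l_3$ and yields $\rho=(3h+3(d-2)s)\cdot[R]_T$.

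With $[B]_T=\xi-\rho$ in hand, I would push forward along $\pr_{12}$ into ${\rm CH}_T(\PP(W_d)\times\PP^2)$, where $\pr_{12*}[B]_T$ is precisely the class of $\widetilde{Z}_\binod^*$; this uses the fiber-integration rules $\pr_{12*}(1)=\pr_{12*}(t)=0$, $\pr_{12*}(t^2)=1$, and $\pr_{12*}(t^3)=-c_1$, the last coming from the relation $t^3+c_1t^2+c_2t+c_3=0$. The final and most delicate step is to transfer the answer from ${\rm CH}_T(\PP(W_d)\times\PP^2)$ to ${\rm CH}_T(\widetilde{Z})$: since $j^*$ is surjective the class equals $p(h_{\widetilde{Z}},u,c_1)$ for some degree-one $p$ with $j_*p(h_{\widetilde{Z}},u,c_1)=p(h,s,c_1)\cdot[\widetilde{Z}]_T$, and to pin down $p$ uniquely I would show that multiplication by $[\widetilde{Z}]_T$ is injective on degree-one polynomials in $h,s,c_1$, exploiting that $[\widetilde{Z}]_T=h^3+(\text{lower order in }h)$ while no relation in ${\rm CH}^4_T(\PP(W_d)\times\PP^2)$ contains a monomial divisible by $h^3$. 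I expect the residual-intersection bookkeeping—correctly separating the diagonal contribution $\rho$ and tracking the $T$-weights $l_i$ throughout—to be the main obstacle, with the concluding injectivity argument being the essential but more routine device for reading off the coefficients.
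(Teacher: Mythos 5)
Your proposal follows the paper's own proof essentially step for step: the same passage to $T$-equivariant classes in $\PP(W_d)\times\PP^2\times\PP^2$, the same decomposition of the set-theoretic intersection into $B\cup R$, the same use of the residual intersection formula with the normal-bundle sequence and $N_\delta\simeq\Omega_{\PP^2}^\vee$ to extract $\rho=(3h+3(d-2)s)\cdot[R]_T$, the same pushforward rules along $\pr_{12}$, and the same concluding injectivity argument for multiplication by $[\widetilde{Z}]_T$ on degree-one polynomials. This is correct and matches the paper's approach.
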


\subsection{Cohomological invariants of $\Xcal_d^\fr$ in low degree}

%In this subsection we will go as far with computing the cohomological invariants of $\Mcal_3$ as it is possible using equivariant Chow rings with coefficients. It will turn out to be enough to compute them over an algebraically closed field; the explicit construction in the next section will conclude our computation for an abitrary base field of characteristic different from $2$. 

%We are going to proceed from bottom to top, starting from the highest codimensional part of the stratification of $\Xcal_{4}$, working our way up to (almost) understanding the invariants of $\Xcal_4$ and then turning our attention to the $\Gm$-torsor $\Mcal_3 \smallsetminus \Hcal_3 \to \Xcal_4$. First we need a lemma on ${\rm GL}_3$-equivariant Chow groups.
In this section, we study the low degree cohomological invariants of $\Xcal_d^{\fr}$ with coefficients in a cycle module $\M$, leveraging their interpretation as the codimension zero $\GLt$-equivariant Chow group of $\PP(W_d)\smallsetminus Z$ with coefficients in $\M$. We start with a simple lemma.
\begin{lm}[\cite{DilPirBr}*{Proposition 4.2}]
Let $\mathcal{E}=\left[ \bA^3/{\rm GL}_3\right] \to {\rm BGL}_3$ be the standard representation of ${\rm GL}_3$ and let $c_i = c_i(\mathcal{E})$ for $i = 1,2,3$. Then 
\[{\rm A}^{\bullet}_{{\rm GL}_3}(\Spec(\bfk),\M)=\ZZ\left[c_1, c_2, c_3 \right] \otimes \M^{\bullet}(\bfk).\]
\end{lm}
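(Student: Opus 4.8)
The plan is to reduce the computation to the maximal torus via the projective bundle formula and then to descend back to $\GLt$. To pin down the generators I first realize $\Brm\GLt$ by equivariant approximation: writing $U\subset\mathrm{Mat}_{3\times N}$ for the open locus of matrices of maximal rank, one has $[U/\GLt]=\mathrm{Gr}(3,N)$, and for $N$ large the complement of $U$ has codimension exceeding any fixed degree, so that ${\rm A}^{\bullet}_{\GLt}(\Spec(\bfk),\M)={\rm A}^{\bullet}(\mathrm{Gr}(3,N),\M)$ in every bounded degree. Under this identification the classes $c_i=c_i(\mathcal{E})$ are the Chern classes of the tautological rank-three bundle, so it is enough to understand these groups and then let $N\to\infty$.

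Next I would pass to the complete flag bundle $\pi\colon\mathrm{Fl}(\mathcal{E})\to\Brm\GLt$ of the universal rank-three bundle, which is constructed as an iterated projectivization (first $\PP(\mathcal{E})$, then the projectivization of the universal quotient, and so on) and which is nothing but the classifying stack $\Brm B$ of the Borel subgroup $B\subset\GLt$. Applying the projective bundle formula (property (F)) three times exhibits ${\rm A}^{\bullet}(\mathrm{Fl}(\mathcal{E}),\M)$ as a free ${\rm A}^{\bullet}_{\GLt}(\Spec(\bfk),\M)$-module of rank $3!=6$, with basis the standard monomials $m_j=l_1^{a_1}l_2^{a_2}$ ($0\le a_1\le 2$, $0\le a_2\le 1$) in the Chern roots $l_1,l_2,l_3$ of $\mathcal{E}$, where $m_1=1$ and $\pi^*(c_i)$ is the $i$-th elementary symmetric function $e_i(l_1,l_2,l_3)$ up to sign. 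On the other hand the projection $\Brm B\to\Brm T$ has unipotent, hence affine-space, fibres, so homotopy invariance of Chow groups with coefficients (Rost) gives ${\rm A}^{\bullet}(\mathrm{Fl}(\mathcal{E}),\M)={\rm A}^{\bullet}_T(\Spec(\bfk),\M)$. The right-hand side is computed directly by approximating $\Brm T=(\Brm\Gm)^3$ with a product of projective spaces and iterating the projective bundle formula over a point:
\[{\rm A}^{\bullet}_T(\Spec(\bfk),\M)=\M^{\bullet}(\bfk)[l_1,l_2,l_3],\]
a free $\M^{\bullet}(\bfk)$-module on the monomials in the $l_i$.

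Setting $R:={\rm A}^{\bullet}_{\GLt}(\Spec(\bfk),\M)$, $S:=\M^{\bullet}(\bfk)[l_1,l_2,l_3]$, and letting $\phi\colon\M^{\bullet}(\bfk)[c_1,c_2,c_3]\to R$ be the map $c_i\mapsto c_i(\mathcal{E})$, the two descriptions say that $\pi^*$ embeds $R$ in $S$ with $S$ free of rank $6$ over $\pi^*(R)$ on the basis $\{m_j\}$, while $\pi^*\phi$ identifies $\M^{\bullet}(\bfk)[c_1,c_2,c_3]$ with the symmetric subring $\M^{\bullet}(\bfk)[e_1,e_2,e_3]\subseteq S$, over which $S$ is again free of rank $6$ on the same basis $\{m_j\}$---this being Artin's classical freeness of $\ZZ[l_1,l_2,l_3]$ over its symmetric subring, which survives $\otimes_{\ZZ}\M^{\bullet}(\bfk)$ because that extension is free. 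Since $\pi^*\phi(\M^{\bullet}(\bfk)[c_1,c_2,c_3])\subseteq\pi^*(R)$ and both admit $\{m_j\}$ as a free basis with $m_1=1$, expanding any $r\in\pi^*(R)$ in the symmetric basis and comparing with its unique expression $r=r\cdot m_1$ in the $\pi^*(R)$-basis forces $r\in\pi^*\phi(\M^{\bullet}(\bfk)[c_1,c_2,c_3])$; hence $\pi^*(R)=\pi^*\phi(\M^{\bullet}(\bfk)[c_1,c_2,c_3])$, and as $\pi^*$ is injective, $R=\phi(\M^{\bullet}(\bfk)[c_1,c_2,c_3])$. The same comparison shows $\phi$ injective (the $e_i$ are algebraically independent over $\M^{\bullet}(\bfk)$), so $\phi$ is an isomorphism; the ring structure is the expected one by property (G), and letting $N\to\infty$ gives ${\rm A}^{\bullet}_{\GLt}(\Spec(\bfk),\M)=\ZZ[c_1,c_2,c_3]\otimes\M^{\bullet}(\bfk)$.

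I expect the descent from the torus to $\GLt$ to be the delicate point: one must match the projective-bundle-formula basis of ${\rm A}^{\bullet}(\mathrm{Fl}(\mathcal{E}),\M)$ over $R$ with Artin's standard monomial basis over the symmetric subring, and check that passing to symmetric classes commutes with the coefficients $\M^{\bullet}(\bfk)$. Both rest on freeness---over $\M^{\bullet}(\bfk)$ and over the symmetric subring---which keeps every tensor product exact; this is precisely why $\GLt$, being a special group with polynomial Chow ring, is so well behaved. A secondary point worth stating explicitly is the homotopy invariance of Chow groups with coefficients along the unipotent fibration $\Brm B\to\Brm T$: it is standard in Rost's framework but is not among the properties (A)--(I) recalled above, so it should be invoked as such.
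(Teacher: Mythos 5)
Your argument is correct. Note first that the paper does not actually prove this lemma: it is imported verbatim from \cite{DilPirBr}*{Proposition 4.2}, so there is no in-text proof to compare against; what you have written is the standard splitting-principle computation, which is essentially the route taken in the cited reference and in the Edidin--Graham/Totaro tradition it adapts. The skeleton is sound: approximation of ${\rm B}\GLt$ by Grassmannians reduces everything to bounded degrees; the flag bundle of $\mathcal{E}$ is ${\rm B}B$, homotopy invariance along ${\rm B}B\to{\rm B}T$ (which, as you say, is not among the listed properties (A)--(I) but is standard for Rost's cycle modules and is used elsewhere in the paper, e.g.\ in the proof of the lemma on towers of bundles) gives ${\rm A}^{\bullet}_B={\rm A}^{\bullet}_T=\M^{\bullet}(\bfk)[l_1,l_2,l_3]$; and the iterated projective bundle formula makes ${\rm A}^{\bullet}({\rm Fl}(\mathcal{E}),\M)$ free of rank $6$ over $R={\rm A}^{\bullet}_{\GLt}(\Spec(\bfk),\M)$. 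Your key observation --- that the projective-bundle-formula basis $\{l_1^{a_1}l_2^{a_2}\}$, $0\le a_1\le 2$, $0\le a_2\le 1$, coincides (up to signs, which are harmless) with Artin's standard monomial basis of $\ZZ[l_1,l_2,l_3]$ over $\ZZ[e_1,e_2,e_3]$, and that freeness survives $\otimes_{\ZZ}\M^{\bullet}(\bfk)$ --- is exactly what makes the sandwich $\M^{\bullet}(\bfk)[e_1,e_2,e_3]\subseteq\pi^*(R)\subseteq S$ collapse: expanding $r\in\pi^*(R)$ in the symmetric basis and comparing with $r=r\cdot 1$ in the $\pi^*(R)$-basis forces all higher coefficients to vanish. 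The only point I would insist you make explicit is the identification of the hyperplane classes of the iterated projectivization with the Chern roots of the tautological flag (so that the two bases really are the same set of monomials), but this is routine Whitney-formula bookkeeping and does not affect the validity of the argument.
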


In particular, if $X$ is a tower of ${\rm GL}_3$-equivariant affine bundles and ${\rm GL}_3$-equivariant projective bundles, we can apply the lemma and the projective bundle formula to completely reduce its equivariant Chow groups with coefficients to the ordinary (equivariant) Chow groups and the chosen cycle module.

\begin{lm}\label{lm:tensor}
Let $X$ be a ${\rm GL}_3$-equivariant scheme that is obtained from $\Spec(\bfk)$ as a sequence of $\GLt$-equivariant affine and projective bundles. Then
\[{\rm A}^{\bullet}_{\GLt}(X)={\rm CH}^{\bullet}_{\GLt}(X) \otimes \M^{\bullet}(\bfk).\]
\end{lm}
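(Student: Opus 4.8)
The plan is to prove \Cref{lm:tensor} by induction on the number of bundle stages used to build $X$ from $\Spec(\bfk)$, using the preceding lemma as the base case together with the structural properties (E), (F), (H) of equivariant Chow groups with coefficients recalled in \Cref{sec:equi chow}. The base case is exactly the statement that ${\rm A}^{\bullet}_{\GLt}(\Spec(\bfk),\M)=\ZZ[c_1,c_2,c_3]\otimes\M^{\bullet}(\bfk)={\rm CH}^{\bullet}_{\GLt}(\Spec(\bfk))\otimes\M^{\bullet}(\bfk)$, which holds by \cite{DilPirBr}*{Proposition 4.2}. For the inductive step I would assume that $X'$ is built in one fewer stage and already satisfies ${\rm A}^{\bullet}_{\GLt}(X',\M)={\rm CH}^{\bullet}_{\GLt}(X')\otimes\M^{\bullet}(\bfk)$, and then handle the two types of bundle separately.

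For a $\GLt$-equivariant affine bundle $X\to X'$, the pullback map $f^{\ast}$ is an isomorphism on equivariant Chow groups with coefficients (homotopy invariance; this is the affine-bundle case already used implicitly for cohomological invariants and carries the module structure along), and simultaneously an isomorphism on ordinary equivariant Chow groups. Since the isomorphism is compatible with the $\K_\ell$- (or $\K_{\rm Mil}$-) module structure by property (I), the inductive hypothesis for $X'$ transports verbatim to $X$. For a $\GLt$-equivariant projective bundle $X=\PP(E)\to X'$, I would apply the projective bundle formula (F), which gives
\[
{\rm A}^{\bullet}_{\GLt}(\PP(E),\M)=\bigoplus_{r\geq 0} c_1(\Ocal_{\PP(E)}(-1))^{r}\cdot {\rm A}^{\bullet}_{\GLt}(X',\M),
\]
and identically ${\rm CH}^{\bullet}_{\GLt}(\PP(E))=\bigoplus_r c_1(\Ocal(-1))^r\cdot {\rm CH}^{\bullet}_{\GLt}(X')$. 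Substituting the inductive hypothesis ${\rm A}^{\bullet}_{\GLt}(X',\M)={\rm CH}^{\bullet}_{\GLt}(X')\otimes\M^{\bullet}(\bfk)$ into the first display and pulling the free polynomial factor in $c_1(\Ocal(-1))$ outside the tensor product over $\M^{\bullet}(\bfk)$ identifies the right-hand side with $\big(\bigoplus_r c_1(\Ocal(-1))^r\cdot{\rm CH}^{\bullet}_{\GLt}(X')\big)\otimes\M^{\bullet}(\bfk)={\rm CH}^{\bullet}_{\GLt}(\PP(E))\otimes\M^{\bullet}(\bfk)$, as desired.

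The main point requiring care, rather than a genuine obstacle, is the bookkeeping of the two gradings: ${\rm A}^{\bullet}_{\GLt}$ carries both a codimension grading and the internal cycle-module degree, and the tensor product on the right is taken so that the codimension lives entirely in the ${\rm CH}^{\bullet}_{\GLt}$ factor while the internal degree lives in $\M^{\bullet}(\bfk)$. I would therefore phrase the projective bundle step degree by degree, noting that the Chern classes $c_1(\Ocal(-1))$ and $c_1,c_2,c_3$ act purely in the codimension direction (they raise codimension and preserve internal degree), so that the free module over $\ZZ[c_1,c_2,c_3]$ identified in the base case is exactly the first tensor factor. The compatibility of the module action with pullbacks and Chern-class operations, guaranteed by properties (E), (H) and (I), is what ensures the splitting is natural and that no cross terms mixing the two gradings appear; once this is observed, both inductive steps are formal and the statement follows.
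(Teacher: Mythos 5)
Your proof is correct and follows exactly the paper's argument: the paper likewise starts from ${\rm A}^{\bullet}_{\GLt}(\Spec(\bfk),\M)=\ZZ[c_1,c_2,c_3]\otimes\M^{\bullet}(\bfk)$ and applies homotopy invariance or the projective bundle formula at each stage of the tower. Your version merely spells out the induction and the grading bookkeeping that the paper leaves implicit.
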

\begin{proof}
The statement follows easily by starting from ${\rm A}^{\bullet}_{{\rm GL}_3}(\Spec(\bfk),\M)$ and applying at each step either the projective bundle formula or homotopy invariance.  
\end{proof}

This applies to $\widetilde{Z}$, which by \cite{FulVis}*{Proposition 4.2} is an equivariant projective bundle over $\PP^2$. Using this we can deal with the divisor $Z$ of singular plane curves.

\begin{rmk}\label{rmk:splitting} 
Often the Chow groups with coefficients we work with will exhibit surprisingly good behavior, working similarly to free modules and allowing us to easily split exact sequences. The following two cases will occur repeatedly throughout the paper:
\begin{enumerate}
    \item Consider a smooth algebraic group $G$ acting on a scheme $X$, and let $U \subseteq X$ be its smooth locus. If $U$ has a $\bfk$-rational point, then the composition
    \[{\rm A}^0_G(X,\M) \to {\rm A}^0(U,\M) \to A^0(\Spec(\bfk),\M)=\M^{\bullet}(\bfk)\]
    is a retraction of the pullback $\M^{\bullet}(\bfk)=A^0(\Spec(\bfk),\M) \to A^0_G(X,\M)$ so that $\M^{\bullet}(\bfk)$ is a direct summand in $A^0_G(X,\M)$.

    Note that if the field $\bfk$ is finite (which is often a problem when looking for points on an open subset) we can replace it with $\bfk(t)$ as the map
    \[x \mapsto \partial_{t = 0}(\lbrace t \rbrace \cdot x)\]
    is a retraction from $\M^{\bullet}(\bfk(t))$ to $\M^{\bullet}(\bfk)$.
    \item Consider a $G$-equivariant closed immersion $D \to X$ of codimension one. Assume that there exist positive integers $\ell_i \mid \ell$ and elements $\alpha_i\in {\rm A}^0_G(D,\K_{\ell_i})$ such that for all $\ell$-torsion modules we have:
    \begin{enumerate}
        \item The map $\M^{\bullet}_{\ell_i}(\bfk) \to {\rm A}^{\bullet}(D, \M)$ given by $x \mapsto \alpha_{i}\cdot x$ is injective for all $i$.
        \item An equality
        \[{\rm Im}(\partial_D: {\rm A}^{0}_G(X\smallsetminus D,\M)\to {\rm A}^{0}_G(D,\M)) = \oplus_i \alpha_i\cdot\M^{\bullet}(\bfk)_{\ell_i}. \]
    \end{enumerate}
        Then picking $\M=\K_{\ell_i}$ we can find elements $\beta_i \in {\rm A}^0_G(X\smallsetminus D,\K_{\ell_i})$ such that $\partial_D(\beta_i)=\alpha_i$ and the map
    \[\oplus_i \alpha_i\M^{\bullet}(\bfk)_{\ell_i} \to A^{0}_G(X \smallsetminus D, \M), \quad \alpha_i \cdot x \mapsto \beta_i \cdot x  \]
    is a splitting of $\partial_D$ as by the compatiblity of product and boundary map \cite{Rost}*{R3f, p.330} we have $\partial_D(\beta_i \cdot x)=\alpha_i \cdot x$.
\end{enumerate}
     
\end{rmk}

\begin{prop}
We have 
\[  {\rm A}^0_{{\rm GL}_3}(Z, \M^{\leq 1}) = {\rm A}^0_{{\rm GL}_3}(Z_{\textnormal{irr}}, \M^{\leq 1}) =\begin{cases} \M^{\leq 1}(\bfk) \oplus \M^{0}(\bfk)_2\left[1\right] \,& \textnormal{if } d \textnormal{ is even,}\\
\M^{\leq 1}(\bfk) \,& \textnormal{if }  d \textnormal{ is odd.}
\end{cases}\]
where $\M^{0}(\bfk)_2\left[1\right]= \alpha\cdot\M^{0}(\bfk)_2$ for an element $\alpha\in {\rm A}^0_{{\rm GL}_3}(Z_{\textnormal{irr}},\K_2)$ of degree one. 
\end{prop}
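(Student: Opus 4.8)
The plan is to push everything onto the open locus $Z_\irr$ of irreducible singular curves, to compute its invariants through the resolution $\widetilde{Z}\to Z$, and to isolate the single extra degree-one class produced by the degree-two cover over the binodal locus. The degree-zero part is forced to be $\M^0(\bfk)$ since $Z$ and $Z_\irr$ are irreducible, and the constant classes $\M^{\leq 1}(\bfk)$ always split off by pulling back from a $\bfk$-rational point as in \Cref{rmk:splitting}(1), so the whole content of the statement lives in the degree-one part and in the appearance of the $2$-torsion summand for even $d$.

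First I would discard reducible and non-reduced curves. A dimension count shows that the largest stratum of $Z\smallsetminus Z_\irr$, the unions of a line and a curve of degree $d-1$, has codimension $d-2$ in $Z$; for $d\geq 4$ the entire complement thus has codimension at least two, and the localization long exact sequence for Chow groups with coefficients gives ${\rm A}^0_{\GLt}(Z,\M^{\leq 1})={\rm A}^0_{\GLt}(Z_\irr,\M^{\leq 1})$ with no correction term. Next I would exploit the resolution. Since $\widetilde{Z}$ is a $\GLt$-equivariant projective bundle over $\PP^2$, \Cref{lm:tensor} gives ${\rm A}^\bullet_{\GLt}(\widetilde{Z})={\rm CH}^\bullet_{\GLt}(\widetilde{Z})\otimes\M^\bullet(\bfk)$, whose codimension-zero part is simply $\M^\bullet(\bfk)$ as $\widetilde{Z}$ is irreducible; removing the (high-codimension) preimage of $Z\smallsetminus Z_\irr$ leaves ${\rm A}^0_{\GLt}(\widetilde{Z}_\irr,\M)=\M^\bullet(\bfk)$. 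Because $\pi\colon\widetilde{Z}_\irr\to Z_\irr$ is birational, an isomorphism away from $\overline{Z_\binod}$ and étale of degree two over $Z_\binod$, the function fields agree and, since codimension-zero ${\rm A}^0$ is exactly the group of classes in the common function field that are unramified at every codimension-one point, the two groups ${\rm A}^0_{\GLt}(Z_\irr,\M)$ and ${\rm A}^0_{\GLt}(\widetilde{Z}_\irr,\M)=\M^\bullet(\bfk)$ agree away from the condition imposed along $D:=\overline{Z_\binod}$.

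The heart of the argument is then the boundary analysis along $D$. Setting $N:=Z_\irr\smallsetminus D\cong\widetilde{Z}_\irr\smallsetminus\widetilde{Z}_\binod^*$, I would compute ${\rm A}^0_{\GLt}(N,\M^{\leq 1})$ from ${\rm A}^0_{\GLt}(\widetilde{Z}_\irr,\M)=\M^\bullet(\bfk)$ through the localization sequence along $\widetilde{Z}_\binod^*$: the image of the boundary map in degree one is the set of $x\in\M^0(\bfk)$ killed by multiplication by the fundamental class inside ${\rm A}^1_{\GLt}(\widetilde{Z}_\irr,\M^0)={\rm CH}^1_{\GLt}(\widetilde{Z}_\irr)\otimes\M^0(\bfk)$. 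Reducing $[\widetilde{Z}_\binod^*]_{\GLt}=3d(d-2)h_{\widetilde{Z}}-d(d-1)^2c_1-3(d-2)u$ of \Cref{lm:class D} modulo two, every coefficient is even precisely when $d$ is even, so this class (and likewise $[Z]_{\GLt}=(d-1)^2(3h-dc_1)$ of \Cref{lm:class Z}) vanishes mod two if and only if $d$ is even. Hence for even $d$ the generator of $\M^0(\bfk)_2$ lies in the image and yields a $2$-torsion degree-one class, whereas for odd $d$ it does not. Feeding this into the splitting criterion \Cref{rmk:splitting}(2) with $\ell_i=2$ produces the candidate class $\alpha$ and the summand $\alpha\cdot\M^0(\bfk)_2$ for even $d$, and nothing extra for odd $d$.

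The main obstacle is the descent from $N$ to $Z_\irr$: the candidate class $\beta$ was built to ramify along $\widetilde{Z}_\binod^*$, so I must check that it is nonetheless unramified along $D$ as a class on the (singular) $Z_\irr$, which is what makes it an element of ${\rm A}^0_{\GLt}(Z_\irr,\K_2)$ rather than merely of ${\rm A}^0_{\GLt}(N,\K_2)$. This is where the degree-two cover enters: the ramification of $\beta$ along $D$, computed on $Z_\irr$, is the corestriction of its ramification along $\widetilde{Z}_\binod^*$ across the residue extension $\bfk(\widetilde{Z}_\binod)/\bfk(Z_\binod)$, and the corestriction of the generator of $\K_2^0$ along this degree-two extension is multiplication by $2$, hence zero, exactly in the connected (even $d$) case. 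Making this precise requires controlling the local structure of $Z$ along $Z_\binod$ and the connectedness of the cover $\widetilde{Z}_\binod\to Z_\binod$, and it is precisely here that the explicit fundamental-class computations, reduced modulo two, do the bookkeeping. Finally, for $d=3$ — where the reducible locus is a divisor rather than of codimension two — one must additionally verify that the surviving class extends across it, so that the first equality ${\rm A}^0_{\GLt}(Z,\M^{\leq 1})={\rm A}^0_{\GLt}(Z_\irr,\M^{\leq 1})$ still holds.
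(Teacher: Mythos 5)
Your overall strategy --- resolve $Z$ by $\widetilde{Z}$, get ${\rm A}^0_{\GLt}(\widetilde{Z}_\irr,\M)=\M^{\bullet}(\bfk)$ from the projective-bundle structure, run the localization sequence along $\widetilde{Z}_\binod^*$ using the divisibility of the fundamental class from \Cref{lm:class D}, and then descend to $Z_\irr$ by observing that the boundary at $\overline{Z}_\binod$ is the degree-two corestriction of the boundary at $\widetilde{Z}_\binod^*$ --- is exactly the paper's proof; your corestriction step is the paper's commutative diagram of localization sequences with the $2{:}1$ pushforward as the right-hand vertical arrow.

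There is, however, a genuine gap in the odd-$d$ case. Your own criterion identifies the degree-one part of the image of the boundary on $\widetilde{Z}_\irr\smallsetminus\widetilde{Z}_\binod^*$ with the kernel of multiplication by $[\widetilde{Z}_\binod^*]_{\GLt}$, which is the full $r$-torsion $\M^0(\bfk)_r$ with $r=\gcd(d(d-1)^2,3(d-2))$, not merely its $2$-torsion part. For odd $d$ with $d\equiv 0,1 \pmod 3$ (e.g.\ $d=7,9,13,\dots$) one has $r=3$, so ${\rm A}^0_{\GLt}(Z_\irr\smallsetminus\overline{Z}_\binod,\M^{\leq 1})$ genuinely contains an extra summand $\M^0(\bfk)_3[1]$; your assertion that for odd $d$ the boundary image contributes ``nothing extra'' already at this stage is therefore false. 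Those classes have to be eliminated at the descent step: the ramification of $\beta\cdot x$ on $Z_\irr$ along $\overline{Z}_\binod$ is $2x$, which is nonzero for $x$ of odd order, so only $\M^0(\bfk)_{\gcd(2,r)}$ survives --- $\M^0(\bfk)_2$ for $d$ even and $0$ for $d$ odd. The multiplication-by-$2$ mechanism you state does close this hole, but as written you apply it only to the even-$d$ class. Two smaller points: connectedness of $\widetilde{Z}_\binod\to Z_\binod$ is not the relevant issue, since the pushforward is multiplication by $2$ on $\M^0$ whether or not the double cover is split; and your observation that for $d=3$ the reducible locus is a divisor in $Z$ (codimension $d-2$) is correct and flags a case that the paper's one-line proof of the first equality silently excludes (note also that $Z_\binod=\emptyset$ for $d=3$, since an irreducible cubic cannot have two nodes).
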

\begin{proof}
The first equality follows from the fact that the complement of $Z_\irr$ in $Z$ has codimension $\geq 2$.

Recall that as $\widetilde{Z}$ is obtained as a sequence of projective bundles over $\Spec(\bfk)$ we have
\[{\rm A}^{\bullet}_{\GLt}(\widetilde{Z},\M)={\rm CH}_{\GLt}^{\bullet}(\widetilde{Z})\otimes \M(\bfk)\]
so in particular ${\rm A}_{{\rm GL}_3}^0(\widetilde{Z}, \M)=\M^{\bullet}(\bfk)$. As the complement of $\widetilde{Z}_\irr$ has codimension $2$ in $\widetilde{Z}$ we must have that ${\rm A}_{{\rm GL}_3}^0(\widetilde{Z}_\irr, \M)$ is trivial as well.

The projection $\widetilde{Z}_\irr \to Z_\irr$ induces the following commutative diagram of equivariant Chow groups with coefficients
\[
\xymatrixcolsep{1pc}\xymatrix{
 {\rm A}_{{\rm GL}_3}^0(\widetilde{Z}_\irr, \M) \ar[d] \ar[r] & {\rm A}_{{\rm GL}_3}^0(\widetilde{Z}_\irr \smallsetminus \widetilde{Z}_{\binod}^*, \M) \ar@{=}[d] \ar[r]^{\partial} & {\rm A}_{{\rm GL}_3}^0(\widetilde{Z}_{\binod}^*, \M)  \ar[d]^{2:1} \ar[r]^{i_*} & {\rm A}_{{\rm GL}_3}^1(\widetilde{Z}_\irr, \M)\\
 {\rm A}_{{\rm GL}_3}^0(Z_\irr, \M) \ar[r] & {\rm A}_{{\rm GL}_3}^0(Z_\irr \smallsetminus \overline{Z}_\binod, \M)  \ar[r]^{\partial} & {\rm A}_{{\rm GL}_3}^0(\overline{Z}_\binod, \M)  
 }
 \] 
We are only interested in the cohomological invariants of $Z_\irr$ of cohomological degree $\leq 1$. 

From the diagram above, and the fact that $\widetilde{Z}_\irr \smallsetminus \widetilde{Z}_{\binod}^*$ has a point, we have that ${\rm A}_{{\rm GL}_3}^0(\widetilde{Z}_\irr \smallsetminus \widetilde{Z}_{\binod}^*, \M^{\leq 1})$ splits as the direct sum of ${\rm A}_{{\rm GL}_3}^0(\widetilde{Z}_\irr, \M^{\leq 1})$ and
\[\ker(i_*\colon{\rm A}_{{\rm GL}_3}^0(\widetilde{Z}_{\binod}^*, \M^0) \to {\rm A}_{{\rm GL}_3}^1(\widetilde{Z}_\irr, \M^0) ).\]
Observe that, as $\widetilde{Z}_{\binod}^*$ is irreducible, we have ${\rm A}_{{\rm GL}_3}^0(\widetilde{Z}_{\binod}^*, \M^0) \simeq \M^0(\bfk)$.

The pushforward $i_*$ sends any $x \in \M^{\bullet}(\bfk)$ to $[\widetilde{Z}_{\binod}^*]_{\GLt}\cdot x \in {\rm A}^1_{{\rm GL}_3}(\widetilde{Z}_\irr, \M)$. The complement of $\widetilde{Z}_{\rm irr}$ in $\widetilde{Z}$ has codimension $2$, thus the pullback 
\[ A^{1}_{\GLt}(\widetilde{Z},\M) \to A^{1}_{\GLt}(\widetilde{Z}_{\rm irr},\M)
\]
is injective. As $[\widetilde{Z}_{\binod}^*]_{\GLt}\cdot x$ comes from the left hand side, we can check whether it is zero there. Then the element $[\widetilde{Z}_{\binod}^*]_{\GLt}\cdot x$ is zero if and only if $[\widetilde{Z}_{\binod}^*]_{\GLt}\otimes x$ is zero in ${\rm CH}^1_{\GLt}(\widetilde{Z}_\irr)\otimes\M^{\bullet}(\bfk)$. As the group on the left of the tensor product is finitely generated and free, all that matters is how divisible $[\widetilde{Z}_{\binod}^*]_{\GLt}$ is (because the subgroup generated by a primitive element is a direct summand in ${\rm CH}^1_{\GLt}(\widetilde{Z}_\irr)$). By \Cref{lm:class D} we conclude that
\begin{align*}
    {\rm A}_{{\rm GL}_3}^0(\widetilde{Z}_\irr \smallsetminus \widetilde{Z}_{\binod}^*, \M^{\leq 1})=\M^{\leq 1}(\bfk) \oplus \M^{0}(\bfk)_{r}\left[1\right].
\end{align*}
Here $r={\rm gcd}(d(d-1)^2,3(d-2))$, and the $\M^0(\bfk)_r\left[1\right]$ component is equal to $\beta \cdot \M^{0}(\bfk)_r$, where $\beta \in {\rm A}_{{\rm GL}_3}^0(\widetilde{Z}_\irr \smallsetminus \widetilde{Z}_{\binod}^*, \K_r)$ is an inverse image of $1 \in {\rm A}_{{\rm GL}_3}^0(\widetilde{Z}_{\binod}^*, \K_r)$. We have a direct sum as explained in \ref{rmk:splitting}, case (2).

Now we need to understand 
\[{\rm A}_{{\rm GL}_3}^0(Z_{\textnormal{irr}}, \M^{\leq 1})={\rm Ker}({\rm A}_{{\rm GL}_3}^0(Z_{\textnormal{irr}} \smallsetminus \overline{Z}_{\binod}, \M^{\leq 1})  \longrightarrow {\rm A}_{{\rm GL}_3}^0(\overline{Z}_{\binod}, \M^{\leq 1})).\]
We know that ${\rm A}_{{\rm GL}_3}^0(Z_{\textnormal{irr}} \smallsetminus \overline{Z}_{\binod}, \M)={\rm A}_{{\rm GL}_3}^0(\widetilde{Z} \smallsetminus \widetilde{Z}_{\binod}^*, \M)$ and that the kernel of 
\[{\rm A}_{{\rm GL}_3}^0(\widetilde{Z}_\irr \smallsetminus \widetilde{Z}_{\binod}^*, \M) \longrightarrow {\rm A}_{{\rm GL}_3}^0(\widetilde{Z}_{\binod}^*, \M)\]
is exactly $\M^{\bullet}(\bfk)$, so by the compatibility of pushforward and ramification map we conclude that
\[{\rm A}_{{\rm GL}_3}^0(Z_{\textnormal{irr}}, \M^{\leq 1}) = \M^{\leq 1}(\bfk) \oplus {\rm Ker}(\M^{0}(\bfk)_r \longrightarrow {\rm A}_{{\rm GL}_3}^0(\overline{Z}_{\binod}, \M^{0}))\left[1\right].\]
The map $\M^{0}(\bfk)_r \to {\rm A}_{{\rm GL}_3}^0(\overline{Z}_{\binod}, \M^{0})$ is just the restriction to $\M^{0}(\bfk)_r$ of multiplication by $2$ on $\M^{\bullet}(\bfk)$, so 
\[{\rm Ker}(\M^{0}(\bfk)_r \longrightarrow {\rm A}_{{\rm GL}_3}^0(\overline{Z}_{\binod}, \M^{0}))= 
\begin{cases}
    \M^{0}(\bfk)_2 & \textnormal{if } d \textnormal{ is even,}\\
    0 & \textnormal{if } d \textnormal{ is odd}\\ 
\end{cases} \]
where in the first case the kernel is generated by $\alpha = (r/2)\beta$.
\end{proof}

Now we are ready to compute the low degree cohomological invariants of $\Xcal^{\fr}_d$.

\begin{prop}\label{prop: InvX4}
Set $i_d=1$ when $3|d$, and zero otherwise. Then for $d$ even we have \[{\rm A}^0_{{\rm GL}_3}(\PP(W_d)\smallsetminus Z,\M^{\leq 2})=\M^{\leq 2}(\bfk) \oplus \beta_1 \cdot \M^{\leq 1}(\bfk)_{3^{i_d}(d-1)^2} \oplus N\left[2\right]\]
where $N \subseteq \M^{0}(\bfk)_2$ is the kernel of 
\[i_*:\alpha\cdot \M^{0}(\bfk)_2 \subset {\rm A}^0_{{\rm GL}_3}(Z,\M^2) \longrightarrow {\rm A}^1_{{\rm GL}_3}(\PP(W_d),\M^1).\]
For $d$ odd, we have
\[{\rm A}^0_{{\rm GL}_3}(\PP(W_d)\smallsetminus Z,\M^{\leq 2})=\M^{\leq 2}(\bfk) \oplus \beta_1 \cdot \M^{\leq 1}(\bfk)_{3^{i_d}(d-1)^2}. \]
\end{prop}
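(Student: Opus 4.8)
The plan is to run the localization long exact sequence of \Cref{sec:equi chow} for the divisor $Z\subset\PP(W_d)$ and extract ${\rm A}^0_{\GLt}(\PP(W_d)\smallsetminus Z,\M)$ in $\M$-degrees $\leq 2$. Since $\PP(W_d)$ is a projective space, \Cref{lm:tensor} gives ${\rm A}^0_{\GLt}(\PP(W_d),\M)=\M^{\bullet}(\bfk)$ and ${\rm A}^1_{\GLt}(\PP(W_d),\M^m)={\rm CH}^1_{\GLt}(\PP(W_d))\otimes\M^m(\bfk)=(\ZZ h\oplus\ZZ c_1)\otimes\M^m(\bfk)$. As $Z$ has codimension one and ${\rm A}^{-1}_{\GLt}(Z,\M)=0$, the long exact sequence yields, for every $m$, a short exact sequence
\[0\to\M^m(\bfk)\to{\rm A}^0_{\GLt}(\PP(W_d)\smallsetminus Z,\M^m)\xrightarrow{\partial_Z}\ker(i_*\colon{\rm A}^0_{\GLt}(Z,\M^{m-1})\to{\rm A}^1_{\GLt}(\PP(W_d),\M^{m-1}))\to0,\]
the degree drop by one along $\partial_Z$ being the usual one for residue maps. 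Since $\PP(W_d)\smallsetminus Z$ carries a $\bfk$-rational point (e.g.\ the Fermat curve, passing to $\bfk(t)$ if $\bfk$ is finite), \Cref{rmk:splitting}(1) peels off the summand $\M^m(\bfk)$, reducing everything to the kernel of $i_*$ in degrees $m-1\leq1$.

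The core computation is that on the constant part of ${\rm A}^0_{\GLt}(Z,\M^{m-1})$ the pushforward is multiplication by the fundamental class, $i_*(x)=[Z]_{\GLt}\cdot x$. By \Cref{lm:class Z}, $[Z]_{\GLt}=(d-1)^2(3h-dc_1)$; the content of $3h-dc_1$ in the free group $\ZZ h\oplus\ZZ c_1$ is $\gcd(3,d)=3^{i_d}$, so writing $3h-dc_1=3^{i_d}w$ with $w$ primitive makes $\ZZ w$ a direct summand and gives $[Z]_{\GLt}\cdot x=0$ if and only if $3^{i_d}(d-1)^2x=0$. Hence the kernel on the constant part is exactly $\M^{m-1}(\bfk)_{3^{i_d}(d-1)^2}$. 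This disposes of $m=0$ (trivial) and produces the degree-one term: choosing, as in \Cref{rmk:splitting}(2), a class $\beta_1\in{\rm A}^0_{\GLt}(\PP(W_d)\smallsetminus Z,\K_{3^{i_d}(d-1)^2})$ with $\partial_Z(\beta_1)=1$, the products $\beta_1\cdot x$ split $\partial_Z$ and $\partial_Z(\beta_1\cdot x)=x$ forces $x\mapsto\beta_1\cdot x$ to be injective. The same $\beta_1$ supplies the constant part of the degree-two term, so the two assemble into $\beta_1\cdot\M^{\leq1}(\bfk)_{3^{i_d}(d-1)^2}$.

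For the genuinely new part of degree two I feed in the structure ${\rm A}^0_{\GLt}(Z,\M^1)=\M^1(\bfk)\oplus\alpha\cdot\M^0(\bfk)_2$ just computed (for $d$ even; for $d$ odd the $\alpha$-summand is absent and the argument terminates with the two summands above). On $\alpha\cdot\M^0(\bfk)_2$ the projection formula makes $i_*$ equal to multiplication by the $2$-torsion class $i_*(\alpha)$, whose kernel is by definition $N\subseteq\M^0(\bfk)_2$. As $\M^1(\bfk)$ and $\alpha\cdot\M^0(\bfk)_2$ are distinct summands of ${\rm A}^0_{\GLt}(Z,\M^1)$, lifts of $\M^1(\bfk)_{3^{i_d}(d-1)^2}$ and of $\alpha\cdot N$ through $\partial_Z$ are independent, and a further application of \Cref{rmk:splitting}(2) realises $\alpha\cdot N$ as a direct summand $N[2]$, yielding the stated decomposition.

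The step I expect to be hardest is precisely this degree-two analysis for $d$ even: one must verify that $\ker(i_*)$ on ${\rm A}^0_{\GLt}(Z,\M^1)$ is the \emph{internal} direct sum $\M^1(\bfk)_{3^{i_d}(d-1)^2}\oplus\alpha\cdot N$, i.e.\ that the constant contribution $[Z]_{\GLt}\cdot\M^1(\bfk)$ and the $2$-torsion contribution $i_*(\alpha)\cdot\M^0(\bfk)_2$ meet only in zero so that no cross-cancellation occurs, which forces a comparison of the divisibility of $[Z]_{\GLt}$ with the order of $i_*(\alpha)$. Over the algebraically closed fields that are our ultimate target this obstacle disappears: $i_*(\alpha)$ lives in ${\rm A}^1_{\GLt}(\PP(W_d),\K_2^1)={\rm CH}^1_{\GLt}(\PP(W_d))\otimes(\bfk^*/2)=0$, so $i_*(\alpha)=0$, the kernel $N$ is all of $\M^0(\bfk)_2$, and $\alpha$ itself lifts to a degree-two class $\beta_2$ with $\partial_Z(\beta_2)=\alpha$ whose products $\beta_2\cdot\M^0(\bfk)_2$ realise $N[2]$ cleanly.
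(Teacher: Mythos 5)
Your proposal follows essentially the same route as the paper's own proof: the localization exact sequence for $Z\subset\PP(W_d)$, the divisibility of $[Z]_{\GLt}$ from \Cref{lm:class Z} (with content $3^{i_d}(d-1)^2$), the preceding computation of ${\rm A}^0_{\GLt}(Z,\M^{\leq 1})$, and the splittings of \Cref{rmk:splitting} together with Poonen's rational point. The degree-two cross-cancellation you flag for $d$ even over a general field is also left implicit in the paper's one-line proof, and it is ultimately immaterial: over algebraically closed fields it vanishes as you observe, and over general fields the paper pins down $N=\M^{0}(\bfk)_2$ anyway via the explicit generator $\alpha_2(S)$ in \Cref{prop:explicit}.
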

\begin{proof}
The claim is an immediate consequence of the exact sequence
\[
0 \to {\rm A}_{{\rm GL}_3}^0(\PP(W_d), \M) \to {\rm A}_{{\rm GL}_3}^0(\PP(W_d)\smallsetminus Z, \M) \to {\rm A}_{{\rm GL}_3}^0(Z, \M) \to {\rm A}_{{\rm GL}_3}^1(\PP(W_d), \M)
\]
and the fact that by \Cref{lm:class Z} the class $\left[ Z \right]_{\GLt}$ is divisible by $3^{i_d}(d-1)^2$ in ${\rm A}_{{\rm GL}_3}^1(\PP(W_d), \M)\simeq {\rm CH}^1_{\GLt}(\PP(W_d))\otimes \M^{\bullet}(\bfk)$. The splitting of the exact sequence is as in Remark \ref{rmk:splitting}, case (1). We know there is always a point in $\PP(W_d)\smallsetminus Z$ thanks to \cite{PoonHyp}.
\end{proof}
Recall that for a smooth quotient stack $[X/G]$, we have an identification
\[\Inv([X/G], \M) \simeq {\rm A}^0_G(X,\M). \]
Moreover, note that if $\bfk$ is algebraically closed and $\M$ is equal to $\K_\ell$ or $\H_D$ for some $\ell$-torsion Galois module there are no elements of positive degree in $\M^{\bullet}(\bfk)$ and consequently by the projective bundle formula in ${\rm A}^1_{{\rm GL}_3}(\PP(W_4),\M)$, so we can conclude that $N=\M^{0}(\bfk)_2$. Then using \Cref{prop: InvX4} we can easily deduce the following:
\begin{thm}\label{thm:inv Xdfr}
    Assume that the ground field $\bfk$ is an algebraically closed field whose characteristic is not $2$ and does not divide $d$, and that $\M$ is equal to $\K_\ell$ or $\H_D$ for some $\ell$-torsion Galois module.
    Set $i_d=1$ if $3|d$, and zero otherwise. Then we have
    \[{\rm Inv}^{\leq 2}(\Xcal_d^\fr,\M) \simeq
    \begin{cases}
        \M^{\leq 2}(\bfk) \oplus \M^{\leq 1}(\bfk)_{3^{i_d}(d-1)^2}[1] \oplus \M^0(\bfk)_2[2] & \textnormal{if } d \textnormal{ is even,}\\\\
        \M^{\leq 2}(\bfk) \oplus \M^{\leq 1}(\bfk)_{3^{i_d}(d-1)^2}[1] & \textnormal{if } d \textnormal{ is odd.}\\
    \end{cases} \]
    Moreover, for $d$ odd the presentation above holds for any field without the assumption of $\bfk$ being algebraically closed and for any cycle module $\M$. 
\end{thm}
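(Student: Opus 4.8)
The plan is to deduce the theorem from \Cref{prop: InvX4}, which already computes ${\rm A}^0_{\GLt}(\PP(W_d)\smallsetminus Z,\M^{\leq 2})$ for an arbitrary cycle module $\M$ and leaves undetermined only the subgroup $N\subseteq \M^0(\bfk)_2$. First I would use the identification $\Inv([X/G],\M)\simeq {\rm A}^0_G(X,\M)$ together with the presentation $\Xcal_d^\fr=[(\PP(W_d)\smallsetminus Z)/\GLt]$ to rewrite $\Inv^{\leq 2}(\Xcal_d^\fr,\M)$ as the equivariant Chow group of \Cref{prop: InvX4}. After this reduction everything comes down to pinning down $N$ under the extra hypotheses, together with the observation that the odd case requires no extra hypotheses at all.

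I would dispose of the odd case first, as it is essentially immediate. For $d$ odd \Cref{prop: InvX4} produces no $N[2]$ summand, and its proof relied only on the existence of a $\bfk$-rational point of $\PP(W_d)\smallsetminus Z$ together with the splitting of \Cref{rmk:splitting}, case (1); neither ingredient uses that $\bfk$ is algebraically closed nor any special feature of $\M$. Hence, via the general identification $\Inv=\mathrm{A}^0_{\GLt}$, the odd presentation $\M^{\leq 2}(\bfk)\oplus \M^{\leq 1}(\bfk)_{3^{i_d}(d-1)^2}[1]$ holds over an arbitrary field and for any cycle module, which is exactly the last assertion of the theorem.

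The remaining and only genuinely new point is to show that $N=\M^0(\bfk)_2$ in the even case, under the hypotheses that $\bfk$ is algebraically closed and $\M$ is $\K_\ell$ or $\H_D$. By construction $N$ is the kernel of the pushforward $i_*$ of \Cref{prop: InvX4} landing in ${\rm A}^1_{\GLt}(\PP(W_d),\M^1)$, so it suffices to prove that this target group vanishes. Here the two hypotheses enter decisively: for $\bfk$ algebraically closed one has $\M^i(\bfk)=0$ for all $i\geq 1$ — for $\K_\ell$ because Milnor K-theory of an algebraically closed field is divisible in positive degrees, so that $\K^i_{\rm Mil}(\bfk)/\ell$ vanishes, and for $\H_D$ because the absolute Galois group is trivial. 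Combining this with the projective bundle formula ${\rm A}^1_{\GLt}(\PP(W_d),\M)\simeq {\rm CH}^1_{\GLt}(\PP(W_d))\otimes\M^{\bullet}(\bfk)$ supplied by \Cref{lm:tensor}, the cohomological-degree-one part ${\rm CH}^1_{\GLt}(\PP(W_d))\otimes\M^1(\bfk)$ is zero, so $i_*$ has trivial target and $N=\ker(i_*)=\M^0(\bfk)_2$.

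Finally I would substitute $N=\M^0(\bfk)_2$ back into the formula of \Cref{prop: InvX4} and simplify, using $\M^1(\bfk)=0$ to collapse $\M^{\leq 1}(\bfk)$ to $\M^0(\bfk)$ in the surviving summands, recovering the two displayed cases. The main obstacle is precisely this identification of $N$: for a general cycle module, or over a field that is not algebraically closed, the group $\M^1(\bfk)$ need not vanish, the target ${\rm A}^1_{\GLt}(\PP(W_d),\M^1)$ can be nonzero, and then $i_*$ may genuinely cut $N$ down to a proper subgroup of $\M^0(\bfk)_2$. Thus it is exactly the hypotheses on $\bfk$ and $\M$ that force the computation to be clean, and their role should be isolated at this single step.
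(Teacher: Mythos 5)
Your proposal is correct and follows essentially the same route as the paper: both reduce to \Cref{prop: InvX4} via the identification $\Inv([X/G],\M)\simeq{\rm A}^0_G(X,\M)$, and both pin down $N=\M^0(\bfk)_2$ by observing that $\M^{\geq 1}(\bfk)=0$ for $\K_\ell$ or $\H_D$ over an algebraically closed field, so that the target ${\rm A}^1_{\GLt}(\PP(W_d),\M^1)\simeq{\rm CH}^1_{\GLt}(\PP(W_d))\otimes\M^1(\bfk)$ of $i_*$ vanishes. Your additional remarks isolating why the odd case needs no hypotheses on $\bfk$ or $\M$ are accurate and consistent with how the paper derives the final assertion directly from \Cref{prop: InvX4}.
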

\subsection{Brauer group of moduli of smooth plane curves}\label{sec:brauer Xd}
As an immediate application of \Cref{thm:inv Xdfr} we obtain the following result on the Brauer group of $\Xcal_d^\fr$.
\begin{cor}\label{cor:brauer Xdfr}
Let $\bfk$ be an algebraically closed field whose characteristic is not $2$ and does not divide $d$. Then
\begin{enumerate}
    \item if ${\rm char}(\bfk)=0$, we have ${\rm Br}(\Xcal_d^\fr) \simeq \ZZ/\langle 2,d\rangle$.
    \item if ${\rm char}(\bfk)=p>0$, we have ${\rm Br}(\Xcal_d^\fr) \simeq \ZZ/\langle 2, d\rangle\oplus B_{d,p}$, where $B_{d,p}$ is of $p$-primary torsion.
\end{enumerate}
\end{cor}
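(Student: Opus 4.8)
The plan is to read the Brauer group off from the degree-two cohomological invariants already computed in \Cref{thm:inv Xdfr}. By property~(7) of Section~\ref{sec:coh}, for every $\ell$ coprime to ${\rm char}(\bfk)$ one has ${\rm Inv}^2(\Xcal_d^\fr, \H_{\mu_\ell^\vee}) = {\rm Br}'(\Xcal_d^\fr)_\ell$, so I would specialize \Cref{thm:inv Xdfr} to the cycle module $\M = \H_{\mu_\ell^\vee}$, for which $\M^i(\bfk) = \H^i(\bfk, \ZZ/\ell\ZZ(i-1))$ and which is of the form $\H_D$ with $D=\ZZ/\ell\ZZ(-1)$. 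Since $\bfk$ is algebraically closed, $\H^i(\bfk,-)$ vanishes for $i > 0$, so $\M^{\leq 2}(\bfk)$ is concentrated in degree zero, where it equals $\ZZ/\ell\ZZ$; in particular $\M^0(\bfk)_2 = \ZZ/\langle 2,\ell\rangle$. Reading off cohomological degree two, the summands $\M^{\leq 2}(\bfk)$ and $\M^{\leq 1}(\bfk)_{3^{i_d}(d-1)^2}[1]$ contribute $\M^2(\bfk) = 0$ and $\M^1(\bfk)_{3^{i_d}(d-1)^2} = 0$ respectively, so the only contribution comes from the summand $\M^0(\bfk)_2[2]$, which is present exactly when $d$ is even.

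This gives
\[
{\rm Br}'(\Xcal_d^\fr)_\ell =
\begin{cases}
\ZZ/\langle 2,\ell\rangle & \text{if } d \text{ is even},\\
0 & \text{if } d \text{ is odd}.
\end{cases}
\]
To assemble these into the full Brauer group I would let $\ell$ range over all integers coprime to ${\rm char}(\bfk)$. For $d$ even the $2^n$-torsion subgroup ${\rm Br}'(\Xcal_d^\fr)_{2^n}$ is $\ZZ/2\ZZ$ for every $n\geq 1$ while all odd-torsion subgroups vanish; a torsion group whose $2^n$-torsion is constantly $\ZZ/2\ZZ$ has $2$-primary part exactly $\ZZ/2\ZZ$. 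Hence the prime-to-${\rm char}(\bfk)$ torsion of ${\rm Br}'(\Xcal_d^\fr)$ equals $\ZZ/\langle 2,d\rangle$, and for $d$ odd it is trivial, again $\ZZ/\langle 2,d\rangle$.

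It remains to identify ${\rm Br}'$ with ${\rm Br}$ and to sort out the characteristic. The stack $\Xcal_d^\fr = [(\PP(W_d)\smallsetminus Z)/\GLt]$ is a smooth quotient stack with quasi-projective atlas, and for such stacks the cohomological Brauer group is torsion and coincides with the Azumaya Brauer group, so ${\rm Br}(\Xcal_d^\fr) = {\rm Br}'(\Xcal_d^\fr)$. In characteristic zero every torsion prime is coprime to ${\rm char}(\bfk)$, and the computation above is complete: ${\rm Br}(\Xcal_d^\fr) \simeq \ZZ/\langle 2,d\rangle$. In characteristic $p>0$ the group decomposes canonically into its primary components; the prime-to-$p$ part is the $\ZZ/\langle 2,d\rangle$ found above, while the $p$-primary part $B_{d,p}$ is left undetermined, giving ${\rm Br}(\Xcal_d^\fr) \simeq \ZZ/\langle 2,d\rangle \oplus B_{d,p}$.

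The computation is a direct substitution into \Cref{thm:inv Xdfr}, and the colimit over $\ell$ is painless because the $2^n$-torsion does not grow with $n$. The one genuinely nontrivial ingredient is the identification ${\rm Br} = {\rm Br}'$ for this quotient stack, which I expect to be the main obstacle, since the cohomological-invariant formalism only computes the cohomological Brauer group. The $p$-primary torsion in positive characteristic is then automatically a direct summand, as any torsion abelian group splits into its primary parts, so it poses no difficulty beyond being invisible to the prime-to-$p$ methods used here.
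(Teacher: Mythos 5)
Your proposal is correct and follows exactly the route the paper intends: the paper states this corollary as an ``immediate application'' of \Cref{thm:inv Xdfr} with no written proof, and your argument --- specializing to $\M=\H_{\mu_\ell^\vee}$, using the vanishing of positive-degree Galois cohomology over an algebraically closed field so that only the $\M^0(\bfk)_2[2]$ summand survives in degree two, assembling over $\ell$, and invoking ${\rm Br}={\rm Br}'$ for a quotient stack with quasi-projective atlas --- is precisely the fleshed-out version of that. If anything, you are slightly more careful than the paper, which only records the ${\rm Br}={\rm Br}'$ identification for the Deligne--Mumford stack $\Xcal_d$ and not for the gerbe $\Xcal_d^\fr$.
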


\begin{rmk}
If $d$ is odd we can describe the Brauer group of $\Xcal_d^{\fr}$ without assuming that $\bfk$ is algebraically closed:
\begin{enumerate}
    \item if ${\rm char}(\bfk)=0$, we have ${\rm Br}(\Xcal_d^\fr) \simeq {\rm Br}(\bfk) \oplus {\rm H}^1(\bfk,\ZZ/3^{i_d}(d-1)^2\ZZ)$.
    \item if ${\rm char}(\bfk)=p>0$, we have ${\rm Br}(\Xcal_d^\fr)_\ell \simeq {\rm Br}(\bfk)_\ell \oplus {\rm H}^1(\bfk,\ZZ/3^{i_d}(d-1)^2\ZZ)_\ell$.
\end{enumerate}
the ${\rm H}^1(\bfk,\ZZ/3^{i_d}(d-1)^2\ZZ)$ component comes from the cup products of $$\beta_1 \in {\rm Inv}^1(\Xcal_d^\fr,\K_{3^{i_d}(d-1)^2})={\rm H}^1(\Xcal_d^\fr,\mu_{3^{i_d}(d-1)^2})$$ and elements of ${\rm H}^1(\bfk,\ZZ/{3^{i_d}(d-1)^2}\ZZ)$. The splitting is because $\Xcal_d^{\fr}$ always has a $\bfk$-rational point.
\end{rmk}

Recall from \Cref{sub:plane curves} that $\Xcal_d^{\fr}$ is a $\Gm$-gerbe over the moduli stack of smooth plane curves $\Xcal_d$. We want to leverage this result to get to the Brauer group of $\Xcal_d$. First, let us recall the following.
\begin{thm}[\cite{ShiBr}*{Theorem 1.2}]
    Let $S$ be a scheme, and let $\pi:\Gcal \to S$ be a $\Gm$-gerbe, corresponding to a torsion class $[\Gcal]$ in the cohomological Brauer group ${\rm Br}'(S)$. Then we have the following exact sequence
    \[ {\rm H}^0(S,\underline{\ZZ}) \longrightarrow {\rm Br}'(S) \longrightarrow {\rm Br}'(\Gcal) \longrightarrow 0\]
    where the first map maps $1$ to $[\Gcal]$.
\end{thm}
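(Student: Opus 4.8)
The plan is to run the Leray spectral sequence for the structure morphism $\pi\colon \Gcal \to S$ with coefficients in $\Gm$ on the (lisse-)\'etale site, namely
\[ E_2^{p,q} = {\rm H}^p(S, R^q\pi_*\Gm) \Longrightarrow {\rm H}^{p+q}(\Gcal, \Gm), \]
and to read off the desired sequence from its low-degree terms after passing to torsion. The whole argument rests on first identifying the higher direct images $R^q\pi_*\Gm$ and then using the hypothesis that the gerbe class is torsion to control the relevant torsion subgroups.

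For the higher direct images I would argue stalkwise. Since a $\Gm$-gerbe is \'etale-locally trivial, the geometric fibres of $\pi$ are classifying stacks $\Brm\Gm$ over separably closed fields, so the stalk of $R^q\pi_*\Gm$ at a geometric point $\bar s$ is ${\rm H}^q((\Brm\Gm)_{\kappa(\bar s)}, \Gm)$. A direct computation of the cohomology of $\Brm\Gm$ over a separably closed field gives ${\rm H}^0 = \Gm$ (invariant units), ${\rm H}^1 = {\rm Pic}(\Brm\Gm) = \ZZ$ (the weight of a line bundle), and ${\rm H}^2 = {\rm Br}(\Brm\Gm) = 0$. Hence
\[ \pi_*\Gm = \Gm, \qquad R^1\pi_*\Gm = \underline{\ZZ}, \qquad R^2\pi_*\Gm = 0, \]
the first isomorphism being the adjunction unit. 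I expect the vanishing $R^2\pi_*\Gm = 0$ --- equivalently the triviality of the Brauer group of $\Brm\Gm$ over a separably closed field --- to be the main technical obstacle, since it requires a genuine computation with the cohomology of the classifying stack rather than a formal manipulation of the spectral sequence.

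With these identifications the spectral sequence becomes usable in low degree. The transgression is a map $d_2\colon {\rm H}^0(S,\underline{\ZZ}) \to {\rm H}^2(S,\Gm)$, and I would check, by comparing with the \v{C}ech description of the gerbe class, that it sends the canonical generator $1$ to $[\Gcal]$; this is the exact analogue of the appearance of the first Chern class as a transgression for a $\Gm$-bundle. Since $R^2\pi_*\Gm = 0$ the term $E_2^{0,2}$ vanishes, and the filtration on ${\rm H}^2(\Gcal,\Gm)$ yields a short exact sequence
\[ 0 \to {\rm coker}\big(d_2\colon {\rm H}^0(S,\underline{\ZZ}) \to {\rm H}^2(S,\Gm)\big) \xrightarrow{\pi^*} {\rm H}^2(\Gcal,\Gm) \to E_\infty^{1,1} \to 0, \]
where $E_\infty^{1,1}$ is a subgroup of ${\rm H}^1(S,\underline{\ZZ})$ and is therefore torsion-free (continuous homomorphisms from a profinite fundamental group to $\ZZ$ vanish). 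In particular $\ker(\pi^*)$ on ${\rm H}^2$ is exactly the image of $d_2$, that is the subgroup $\ZZ\cdot[\Gcal]$ generated by the gerbe class.

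Finally I would perform the torsion chase, which is where the hypothesis that $[\Gcal]$ is torsion enters decisively. Given a class $x \in {\rm Br}'(\Gcal) = {\rm H}^2(\Gcal,\Gm)_{\rm tors}$, its image in the torsion-free group $E_\infty^{1,1}$ must vanish, so $x = \pi^*(y)$ for some $y$ lifting a class in ${\rm coker}(d_2)$; a multiple of $y$ then lies in $\ZZ\cdot[\Gcal]$, which is torsion by hypothesis, forcing $y$ itself to be torsion, whence $y \in {\rm Br}'(S)$. This proves surjectivity of ${\rm Br}'(S) \to {\rm Br}'(\Gcal)$. For the kernel, an element $y \in {\rm Br}'(S)$ dies in ${\rm Br}'(\Gcal)$ precisely when $y \in \ker(\pi^*) = \ZZ\cdot[\Gcal]$; since $[\Gcal]$ is torsion this entire subgroup already lies in ${\rm Br}'(S)$ and coincides with the image of ${\rm H}^0(S,\underline{\ZZ}) \to {\rm Br}'(S)$, $1 \mapsto [\Gcal]$. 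Assembling these two facts gives the asserted exact sequence.
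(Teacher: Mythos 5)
This statement is not proved in the paper: it is imported verbatim as \cite{ShiBr}*{Theorem 1.2}, so there is no internal proof to compare against. That said, your Leray spectral sequence strategy is exactly the one used in the cited source, and it is also the template the authors themselves follow later when they run the analogous argument for the $\ZZ/2\ZZ$-gerbe $\mathcal{A}^0_3\smallsetminus\tau(\Hcal_3)\to\Mcal_3\smallsetminus\Hcal_3$: identify $\pi_*\Gm$, $R^1\pi_*\Gm$, $R^2\pi_*\Gm$ stalkwise, read off the seven-term exact sequence, and then do a torsion chase. Your transgression identification $d_2(1)=[\Gcal]$, the observation that $E_\infty^{1,1}\subseteq \H^1(S,\underline{\ZZ})$ is torsion-free, and the two-step torsion chase (lift a torsion class of $\H^2(\Gcal,\Gm)$ to $y\in \H^2(S,\Gm)$, note $ny\in\ZZ\cdot[\Gcal]$ is torsion, conclude $y$ is torsion) are all correct and are the right skeleton of the argument.

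Two points need repair. First, and most importantly, the stalk of $R^q\pi_*\Gm$ at a geometric point $\bar s$ is $\H^q(\Gcal\times_S\Spec \OO^{sh}_{S,\bar s},\Gm)$, i.e.\ the cohomology of $\Brm\Gm$ over a \emph{strictly henselian local ring} $R$, not over the separably closed residue field: there is no proper base change available for $\Gm$-coefficients and a non-proper $\pi$, so you cannot reduce to geometric fibres. The required input is therefore $\H^0(\Brm\Gm_R,\Gm)=R^*$, ${\rm Pic}(\Brm\Gm_R)=\ZZ$ and $\H^2(\Brm\Gm_R,\Gm)=0$ for such $R$ (e.g.\ via equivariant approximation of $\Brm\Gm$ by $\PP^N_R$ and the vanishing of ${\rm Br}'(\PP^N_R)$). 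You correctly flag this vanishing as the technical crux, but you leave it entirely as a black box, and it is genuinely the whole content of the theorem; as written the proposal is an outline rather than a proof. Second, a minor point: your justification that $\H^1(S,\underline{\ZZ})$ is torsion-free via homomorphisms out of a profinite fundamental group only applies to normal schemes; for arbitrary $S$ the correct (and easier) argument is that the sequence $0\to\ZZ\xrightarrow{n}\ZZ\to\ZZ/n\to 0$ gives $\H^1(S,\ZZ)[n]=\H^0(S,\ZZ/n)/(\H^0(S,\ZZ)/n)=0$. With these two repairs the argument goes through and agrees with the source's proof.
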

Assuming that the gerbe $\Gcal$ is $q$-torsion for $q$ coprime with ${\rm char}(\bfk)$, the result above easily generalizes to the case when $S=\left[ X/G\right]$ is a quotient stack with $X$ a quasi-projective scheme and the action of $G$ is linearized.

Indeed, with these hypotheses there exists a $G$-representation $V$ and $G$-invariant open subset $U\subset V$ whose complement has codimension $>2$ on which $G$ acts freely, such that $X\times^G U:=X\times U/G$ is a scheme \cite{EG}*{Proposition 23}. Then for any $\ell$ we have the equality ${\rm Br}'(X\times^G U)_{\ell}={\rm Br}'(\left[ X/G\right])_{\ell}$.

We have a natural morphism $f:X\times^G U \to [X/G]$ which realizes $X\times^G U$ as an open subset of a vector bundle over $[X/G]$.
Given a $\Gm$-gerbe $\Gcal\to [X/G]$, we obtain a $\Gm$-gerbe $f^*\Gcal \to X\times^G U$. Applying Shin's theorem, we obtain an exact sequence 
\[ {\rm H}^0(X\times^G U,\underline{\ZZ}) \longrightarrow {\rm Br}'(X\times^G U) \longrightarrow {\rm Br}'(f^*\Gcal) \longrightarrow 0\]
where the left arrow maps $1$ to $f^*[\Gcal]$. Now if $\Gcal$ is $q$-torsion the sequence above can be extended to a commutative diagram
\[
    \begin{tikzcd}
    {\rm H}^0(X\times^G U,\underline{\ZZ}) \ar[r] & {\rm Br}'(X\times^G U)_\ell \ar[r] & {\rm Br}'(f^*\Gcal)_\ell \ar[r] & 0 \\
    {\rm H}^0([X/G],\underline{\ZZ}) \ar[r] \ar[u, "\simeq"] & {\rm Br}'([X/G])_\ell \ar[r] \ar[u, "\simeq"] & {\rm Br}'(\Gcal)_\ell \ar[u, "\simeq"] \ar[r] & 0
    \end{tikzcd}
\]
where the bottom left arrow sends $1$ to $[\Gcal]$, and the vertical arrows are the pullbacks along $f$, which by construction are all isomorphisms. Then from the exactness of the top sequence we immediately deduce the exactness of the bottom sequence.

Before getting to the proof the main theorem, we need an intermediate result. Recall that given a ${\rm PGL}_n$-torsor $P \to S$ we have
\[{\rm H}^1(S,{\rm PGL}_n)\ni (P\to S) \mapsto (\left[P/{\rm GL}_n\right]\to S) \in {\rm H}^2(S,\Gm).\] 
On the other hand the ${\rm PGL}_n$-torsor $P$  corresponds to the Brauer-Severi variety $(P\times_S \PP^n)/{\rm PGL}_n$. In particular the universal Brauer-Severi variety $\left[\PP^n/{\rm PGL}_n\right]$ corresponds to the ${\rm PGL}_n$-torsor $\Spec(\bfk) \to {\rm BPGL}_n$ which in turn maps to the $\Gm$-gerbe ${\rm BGL}_n \to {\rm BPGL}_n$. 
\begin{lm}\label{lm:Br PGL}
    Suppose that ${\rm char}(\bfk)\neq 3$ and that $3 \mid \ell$. Then
    \[{\rm Br}'(\Brm\PGLt)_{\ell} \simeq {\rm Br}(\bfk)_{\ell}\oplus \ZZ/3\ZZ\cdot \gamma
    \]
    where $\gamma$ is the class of the $\Gm$-gerbe $\Brm\GLt \to \Brm\PGLt$ or, equivalently, the class of the universal Severi-Brauer variety over $\Brm\PGLt$
\end{lm}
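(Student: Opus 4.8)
The plan is to apply the generalized version of Shin's theorem, set up immediately before the statement, to the $\Gm$-gerbe $\pi\colon\Brm\GLt \to \Brm\PGLt$ whose class is $\gamma$. Since $\gamma$ is the Brauer class of the universal degree-three Azumaya algebra (equivalently, of the universal relative-dimension-two Severi--Brauer variety), it is $3$-torsion; as $3 \mid \ell$ and ${\rm char}(\bfk) \neq 3$, it lies in ${\rm Br}'(\Brm\PGLt)_\ell$ and the gerbe is $q$-torsion for $q=3$ coprime to the characteristic. Writing $\Brm\PGLt = [\Spec(\bfk)/\PGLt]$ as a quotient stack with linearized action, the hypotheses of the generalized Shin's theorem are met, so we obtain an exact sequence
\[ {\rm H}^0(\Brm\PGLt, \underline{\ZZ}) \longrightarrow {\rm Br}'(\Brm\PGLt)_\ell \longrightarrow {\rm Br}'(\Brm\GLt)_\ell \longrightarrow 0 \]
in which the left map sends $1$ to $\gamma$. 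As $\Brm\PGLt$ is connected the first group is $\ZZ$, so the kernel of the right-hand map is exactly the cyclic subgroup $\langle\gamma\rangle$.

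The next step is to identify ${\rm Br}'(\Brm\GLt)_\ell$. Using the identifications ${\rm Inv}^2(\Brm\GLt, \H_{\mu_\ell^\vee}) = {\rm Br}'(\Brm\GLt)_\ell$ and ${\rm Inv}(\Brm\GLt, \M) = {\rm A}^0_{\GLt}(\Spec(\bfk), \M)$, together with the computation ${\rm A}^\bullet_{\GLt}(\Spec(\bfk), \M) = \ZZ[c_1,c_2,c_3] \otimes \M^\bullet(\bfk)$, the codimension-zero part is simply $\M^\bullet(\bfk)$; in cohomological degree two this yields $\H^2(\bfk, \mu_\ell) = {\rm Br}(\bfk)_\ell$. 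The exact sequence thus becomes $0 \to \langle\gamma\rangle \to {\rm Br}'(\Brm\PGLt)_\ell \to {\rm Br}(\bfk)_\ell \to 0$. To split it I would use the structure morphism $p\colon\Brm\PGLt \to \Spec(\bfk)$: the composite ${\rm Br}(\bfk)_\ell \xrightarrow{p^*} {\rm Br}'(\Brm\PGLt)_\ell \to {\rm Br}'(\Brm\GLt)_\ell$ is pullback along the structure map of $\Brm\GLt$, which under the identification above is the identity on ${\rm Br}(\bfk)_\ell$. Hence $p^*$ is a section, giving ${\rm Br}'(\Brm\PGLt)_\ell = {\rm Br}(\bfk)_\ell \oplus \langle\gamma\rangle$.

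It remains to compute the order of $\gamma$, which I expect to be the only genuinely delicate point. Being $3$-torsion, its order divides $3$, so I must rule out $\gamma = 0$. For this I would restrict $\gamma$ along a point $\Spec(F) \to \Brm\PGLt$, with $F/\bfk$ a finitely generated extension, classifying a degree-three division algebra: concretely one may take $F = \bfk(\zeta_3)(a,b)$ and the cyclic symbol algebra $(a,b)_{\zeta_3}$, which is division of degree three, or more abstractly the generic degree-three Azumaya algebra over the function field of $\Brm\PGLt$. The restriction of $\gamma$ along such a point is the Brauer class of that division algebra, which has order exactly three; since the restriction of the zero class is zero, this forces $\gamma \neq 0$ and hence $\langle\gamma\rangle \simeq \ZZ/3\ZZ$. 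Combining the three steps gives ${\rm Br}'(\Brm\PGLt)_\ell \simeq {\rm Br}(\bfk)_\ell \oplus \ZZ/3\ZZ\cdot\gamma$, as claimed.
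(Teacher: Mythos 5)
Your proof is correct and follows essentially the same route as the paper: apply (the generalized) Shin's theorem to the $\Gm$-gerbe $\Brm\GLt \to \Brm\PGLt$, identify ${\rm Br}'(\Brm\GLt)_\ell$ with ${\rm Br}(\bfk)_\ell$, split off the base via the structure morphism/rational point, and verify that $\gamma$ has order three. The only difference is that where the paper simply asserts the gerbe is non-trivial, you make this explicit by restricting $\gamma$ to a point classifying a degree-three division algebra, which is a valid (and welcome) elaboration.
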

\begin{proof}
    First observe that the class of the $\Gm$-gerbe $\Brm\GLt \to \Brm\PGLt$ is of $3$-torsion as it corresponds to the universal ${\rm PGL}_3$-torsor $\Spec(\bfk) \to \Brm\PGLt$ which has index $3$. Therefore, applying Shin's theorem we get the exact sequence
    \[ \ZZ/3\ZZ \longrightarrow {\rm Br}'(\Brm\PGLt)_{\ell} \longrightarrow {\rm Br}'(\Brm\GLt)_{\ell}={\rm Br}'(\bfk)_{\ell}. \]
    We observe that the first map is injective, as $\Brm\GLt\to\Brm\PGLt$ is a non-trivial gerbe, and the last map has a retraction as ${\rm BPGL}_3$ has a $\bfk$-rational point. This concludes the proof.
\end{proof}
We are now ready to prove the main theorem of this section. Recall \cites{EHKV,KV} that for a smooth, generically tame and separated Deligne-Mumford stack whose coarse moduli space is quasi-projective the Brauer group and cohomological Brauer group coincide. As these conditions are always verified for $\Xcal_d$ we have ${\rm Br}'(\Xcal_d)={\rm Br}(\Xcal_d)$.
\begin{thm}\label{thm:brauer Xd}
Let $\bfk$ be an algebraically closed field whose characteristic is not $2,3$ and does not divide $d$, and let $\Xcal_d$ be the moduli stack of smooth plane curves of degree $d$ over $\bfk$. Then:
\begin{itemize}
    \item if ${\rm char}(\bfk)=0$ then ${\rm Br}(\Xcal_d)\simeq \ZZ/\langle d,6 \rangle$.
    \item if ${\rm char}(\bfk)=p>0$ then ${\rm Br}(\Xcal_d)\simeq \ZZ/\langle d,6 \rangle \oplus B'_{p,d}$, where $B'_{p,d}$ is of $p$-primary torsion.
\end{itemize}
\end{thm}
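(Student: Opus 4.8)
The plan is to leverage the $\Gm$-gerbe structure $\Xcal_d^\fr \to \Xcal_d$ together with the computation of ${\rm Br}'(\Xcal_d^\fr)$ from \Cref{cor:brauer Xdfr}. First I would identify this gerbe precisely: since $\Xcal_d=[(\PP(W_d)\smallsetminus Z)/\PGLt]$ and $\Xcal_d^\fr=[(\PP(W_d)\smallsetminus Z)/\GLt]$, the gerbe is the pullback of the universal gerbe $\Brm\GLt\to\Brm\PGLt$ along the classifying map $\mu\colon\Xcal_d\to\Brm\PGLt$ of the universal Severi--Brauer surface $\Pcal\to\Xcal_d$. Its class $\gamma$ is $3$-torsion by \Cref{lm:Br PGL}, so $[\Gcal]:=\mu^*\gamma$ is $3$-torsion as well, and equals the Brauer class of $\Pcal\to\Xcal_d$. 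Applying the quotient-stack version of Shin's theorem established above (valid since ${\rm char}(\bfk)\neq 3$) and using that $\Xcal_d$ is connected, so ${\rm H}^0(\Xcal_d,\underline\ZZ)=\ZZ$, I obtain for every $\ell$ divisible by $3$ and coprime to ${\rm char}(\bfk)$ an exact sequence
\[ \ZZ\xrightarrow{1\mapsto[\Gcal]}{\rm Br}'(\Xcal_d)_\ell\longrightarrow {\rm Br}'(\Xcal_d^\fr)_\ell\longrightarrow 0, \]
whose left-hand image is the cyclic group $\langle[\Gcal]\rangle$. Everything then reduces to computing $\mathrm{ord}[\Gcal]$ and solving the resulting extension.

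To pin down the order I would use the relative Picard--Brauer sequence ${\rm Pic}(\Pcal)\to\ZZ\xrightarrow{\partial}{\rm Br}'(\Xcal_d)$ of the Severi--Brauer surface $\Pcal\to\Xcal_d$, in which $\partial(1)=[\Gcal]$ and the fibrewise-degree map has image $\ker\partial$. The universal curve $\Ccal\subset\Pcal$ is a global relative divisor whose restriction to each fibre has degree $d$, so $\OO_\Pcal(\Ccal)\in{\rm Pic}(\Pcal)$ maps to $d\in\ZZ$; hence $d[\Gcal]=0$. Combined with $3[\Gcal]=0$ this gives $\mathrm{ord}[\Gcal]\mid\gcd(3,d)$, so $[\Gcal]=0$ whenever $3\nmid d$. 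The hard part is the reverse direction when $3\mid d$: I must show $[\Gcal]\neq 0$, which cannot be seen on geometric points since every Severi--Brauer surface splits over $\overline{\bfk}$. For this I would produce a test point. Choose a field $K/\bfk$ carrying a nonzero $3$-torsion Brauer class (for instance $K=\bfk(x,y)$) together with the associated nontrivial Severi--Brauer surface $P/K$, whose Picard group is generated by the ample bundle $L$ with $L_{\overline K}=\OO(3)$. When $3\mid d$ the bundle $L^{\otimes d/3}$ is very ample and base-point free, so by Bertini over the infinite field $K$ a general member $C\in|L^{\otimes d/3}|$ is smooth and becomes a smooth plane curve of degree $d$ over $\overline K$. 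The pair $(P,C)$ defines a $K$-point of $\Xcal_d$ along which $\mu^*\gamma$ restricts to $[P]\neq 0$; hence $[\Gcal]\neq 0$ and $\mathrm{ord}[\Gcal]=3$. In all cases $\mathrm{ord}[\Gcal]=3^{i_d}$.

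Finally I would assemble the answer. For any $\ell$ divisible by $6$ and coprime to ${\rm char}(\bfk)$ (possible since ${\rm char}(\bfk)\neq 2,3$), \Cref{thm:inv Xdfr} gives ${\rm Br}'(\Xcal_d^\fr)_\ell=\ZZ/\langle 2,d\rangle$, and feeding $\mathrm{ord}[\Gcal]=3^{i_d}$ into the sequence above yields
\[ 0\longrightarrow \ZZ/3^{i_d}\longrightarrow {\rm Br}'(\Xcal_d)_\ell\longrightarrow \ZZ/\langle 2,d\rangle\longrightarrow 0. \]
The two outer groups have coprime orders, so the extension splits canonically and ${\rm Br}'(\Xcal_d)_\ell\simeq\ZZ/3^{i_d}\oplus\ZZ/\langle 2,d\rangle\simeq\ZZ/\langle d,6\rangle$, independently of the chosen $\ell$. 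Since such $\ell$ are cofinal among the integers coprime to ${\rm char}(\bfk)$, the prime-to-${\rm char}(\bfk)$ torsion of ${\rm Br}'(\Xcal_d)$ is exactly $\ZZ/\langle d,6\rangle$. Using ${\rm Br}(\Xcal_d)={\rm Br}'(\Xcal_d)$ I conclude ${\rm Br}(\Xcal_d)\simeq\ZZ/\langle d,6\rangle$ in characteristic $0$, and the same prime-to-$p$ part together with a possibly nontrivial $p$-primary summand $B'_{p,d}$ in characteristic $p>0$. The only genuinely delicate point is the nonvanishing of $[\Gcal]$ for $3\mid d$; the remaining steps are formal manipulations of the two exact sequences.
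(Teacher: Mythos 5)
Your proposal is correct, and it follows the paper's overall architecture (the cartesian square identifying $\Xcal_d^{\fr}\to\Xcal_d$ as the pullback of $\Brm\GLt\to\Brm\PGLt$, Shin's sequence for quotient stacks, and the coprime-order splitting at the end), but it settles the crucial question of the order of the gerbe class $[\Gcal]$ by a genuinely different method. For $3\nmid d$ the two arguments are essentially equivalent: your Picard--Brauer sequence argument, combining the fibrewise degree $d$ of $\OO_{\Pcal}(\Ccal)$ with the $3$-torsion of $[\Gcal]$, is the same mechanism as the paper's explicit construction of the fibrewise degree-one bundle $\Lcal=\Ocal(a\Ccal)\otimes\omega_{\Pcal/\Xcal_d}^{\otimes b}$. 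For $3\mid d$, however, the paper argues globally: it observes that the twisted action $[A]\cdot F=\det(A)^{d/3}F(A^{-1}\cdot)$ makes $W_d$ a $\PGLt$-representation, so that $g\colon\Xcal_d\to\Brm\PGLt$ is an open substack of a projective bundle and $g^*$ is injective on Brauer groups; you instead exhibit a test point, namely a nontrivial degree-three Severi--Brauer surface $P$ over $K=\bfk(x,y)$ carrying a smooth member of $|L^{\otimes d/3}|$ (available by Bertini over the infinite field $K$, since $d[P]=0$ guarantees the requisite line bundle exists). Both are sound; the paper's route is shorter given the quotient presentation already in hand, while yours is more geometric, avoids the representation-theoretic observation entirely, and has the side benefit of showing concretely that $\Xcal_d$ has points whose underlying Severi--Brauer surface is nontrivial precisely when $3\mid d$. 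The only points worth spelling out in a final write-up are the justification of the relative Picard--Brauer sequence over the stack $\Xcal_d$ (which follows from the same equivariant-approximation reduction the paper uses for Shin's theorem) and the nontriviality of a $3$-torsion class in ${\rm Br}(\bfk(x,y))$, e.g.\ via the residue of the symbol algebra $(x,y)_{\omega,3}$.
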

\begin{proof}
    Observe that we have a cartesian diagram
    \[
    \begin{tikzcd}
        \Xcal_d^{\fr} \ar[r, ] \ar[d, "f"] & \Brm\GLt \ar[d, ] \\
        \Xcal_d \ar[r, "g"] & \Brm\PGLt
    \end{tikzcd}
    \]
    where the horizontal maps come from the presentations of $\Xcal_d$ and $\Xcal_d^{\fr}$ as quotient stacks. From this we deduce that $\Xcal_d^{\fr} \to \Xcal_d$ is a $\Gm$-gerbe whose class in ${\rm Br}'(\Xcal_d)$ is of $3$-torsion. By applying Shin's theorem and \Cref{lm:Br PGL}, we obtain a commutative diagram
    \begin{equation}\label{eq:diag}
    \begin{tikzcd}
    &\ZZ/3\ZZ \ar[r] & {\rm Br}'(\Xcal_d)_\ell \ar[r] & {\rm Br}'(\Xcal_d^{\fr})_\ell \ar[r] & 0 \\
    0\ar[r] & \ZZ/3\ZZ \ar[r] \ar[u, "g^*"] & {\rm Br}'(\Brm\PGLt)_\ell \ar[r] \ar[u, "g^*"] & {\rm Br}'(\bfk)_\ell \ar[u, "g^*"] \ar[r] & 0.
    \end{tikzcd}
    \end{equation}
    Suppose that $3\mid d$. Then $W_d$ is a $\PGLt$-representation, where the action is defined as
    \[ [A] \cdot F(x_1,x_2,x_3) = \det(A)^{\frac{d}{3}}F(A^{-1}(x_1,x_2,x_3)).\]
    This implies that $g\colon\Xcal_d\to\Brm\PGLt$ is a composition of an open embedding and a projective bundle, hence the induced pullback homomorphism $g^*\colon{\rm Br}'(\Brm\PGLt) \to {\rm Br}'(\Xcal_d)$ is injective. From this it follows that the top left morphism in the sequence above is injective, proving ${\rm Br}'(\Xcal_d)_\ell \simeq {\rm Br}'(\Xcal_d^\fr) \times \ZZ/3\ZZ$. 

    Suppose instead that $3 \nmid d$. Then the pullback of the universal Severi-Brauer variety to $\Xcal_d$ is a projective bundle (hence its class is trivial in the Brauer group): indeed, denoting this Severi-Brauer variety as $\pi\colon\Pcal\to\Xcal_d$, we have that over $\Pcal$ there are two well defined line bundles, namely the relative dualizing line bundle $\omega_{\Pcal/\Xcal_d}$ and $\Ocal(\Ccal)$, where $\Ccal\subset \Pcal$ is the universal smooth plane curve.

    Then there exist integers $a$ and $b$ such that $\Lcal:=\Ocal(a\Ccal)\otimes(\omega_{\Pcal/\Xcal_d})^{\otimes b}$ has degree $1$ on every fiber of $\pi\colon\Pcal\to \Xcal_d$, which implies that $\Pcal\simeq \PP(\pi_*\Lcal)$. As the class in the Brauer group of $\Pcal$ coincides with the one of the gerbe $\Xcal_d^{\fr}\to \Xcal_d$, we conclude that the top left arrow in \eqref{eq:diag} is zero, thus proving that ${\rm Br}'(\Xcal_d)\simeq {\rm Br}'(\Xcal_d^\fr)$.
\end{proof}

\begin{rmk}
If $d$ is odd we can again describe the Brauer group of $\Xcal_d$ without assuming that $\bfk$ is algebraically closed:
\begin{enumerate}
    \item if ${\rm char}(\bfk)=0$ we have \[{\rm Br}(\Xcal_d^\fr) \simeq {\rm Br}(\bfk) \oplus {\rm H}^1(\bfk,\ZZ/3^{i_d}(d-1)^2\ZZ) \oplus \ZZ/\langle 3, d\rangle.\]
    \item if ${\rm char}(\bfk)=p>0$ we have 
    \[{\rm Br}(\Xcal_d^\fr)_\ell \simeq {\rm Br}(\bfk)_\ell \oplus {\rm H}^1(\bfk,\ZZ/3^{i_d}(d-1)^2\ZZ)_\ell\oplus \ZZ/\langle 3, d\rangle_\ell \]
\end{enumerate}
To see the splitting of exact sequence note that the torsion in the Picard group of $\Xcal_d$ is the same as the torsion in the Picard group of $\Xcal_d^{\fr}$, which can be seen using \cite{Shi}*{Eq. 2.9.2} and the fact that the character group of $\Gm$ is $\ZZ$. This shows that $\H^1(\Xcal_d^{\fr},\mu_{3^{i_d}(d-1)^2})$ contains an element $\tilde{\beta}_1$ which maps to the element $\beta_1 \in \H^1(\Xcal_d^{\fr},\mu_{3^{i_d}(d-1)^2})$ generating the corresponding component in ${\rm Br}(\Xcal_d^{\fr})$. Then the map
\[{\rm Br}(\Xcal_d^{\fr})={\rm Br}(\bfk)\oplus{\rm H}^1(\bfk,\ZZ/3^{i_d}(d-1)^2\ZZ)\to {\rm Br}(\Xcal_d), \quad (a,x) \mapsto a + \tilde{\beta}_1 \cdot x\]
is a splitting of ${\rm Br}(\Xcal_d) \to {\rm Br}(\Xcal_d^{\fr})$.
\end{rmk}

\section{The Brauer group of $\Mcal_3$}
We want to compute the Brauer group of $\Mcal_3$, the moduli stack of smooth curves of genus $3$, over a ground field $\bfk$ (not necessarily algebraically closed). We do this by computing the low-degree cohomological invariants of $\Mcal_3$ (\Cref{thm:inv M3}). Interestingly, the generator for the (normalized) low-degree cohomological invariants of $\Mcal_3$ comes from the theory of theta-characteristics, and its non-triviality is proved via an argument involving the $28$ bitangents of a smooth quartic and the $27$ lines over a smooth cubic surface (see \Cref{subsection:cubic}).
\subsection{Genus three curves, their canonical model and theta-characteristics}\label{subsec:genusthree}
Recall that the canonical model of a smooth, non-hyperelliptic curve of genus three over a field is a plane quartic curve in $\PP^2_{\bfk}$. Therefore, we have the following guiding principle: every intrinsic geometric property of a non-hyperelliptic curve of genus three has an interpretation in terms of plane quartic curves, and viceversa.

Let $\Mcal_3$ be the moduli stack of smooth genus three curves, which is a smooth, connected Deligne--Mumford stack, and let $\Hcal_3$ be the divisor of hyperelliptic curves. A first instance of the principle above is then the following.
\begin{prop}\label{prop:M3-H3 iso X4}
    There is an isomorphism $\Mcal_3\smallsetminus\Hcal_3 \simeq \Xcal_4$.
\end{prop}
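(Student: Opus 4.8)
The plan is to realize the stated isomorphism through the relative canonical embedding, which is the family version of the classical fact that the canonical model of a smooth non-hyperelliptic genus three curve is a smooth plane quartic and, conversely, that every smooth plane quartic arises this way. Concretely I would produce a forgetful morphism $\Xcal_4 \to \Mcal_3 \smallsetminus \Hcal_3$ and a canonical-embedding morphism $\Mcal_3 \smallsetminus \Hcal_3 \to \Xcal_4$, and then show they are mutually quasi-inverse.

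\emph{The forgetful morphism.} Given an object $(C \overset{\iota}{\hookrightarrow} P \to S)$ of $\Xcal_4$ I send it to the underlying family $C \to S$. Each geometric fiber $C_s$ is a smooth plane quartic, hence by the genus--degree formula has genus $\binom{4-1}{2}=3$; as the genus is locally constant this exhibits $C \to S$ as a family of genus three curves. Moreover adjunction on the $\PP^2$-bundle $P \to S$ gives $\omega_{C/S} \cong (\omega_{P/S}\otimes \OO_P(C))|_C \cong \OO_P(1)|_C$, so the canonical bundle of each fiber is the very ample bundle cut out by lines; since a hyperelliptic curve has a two-to-one canonical map, no fiber is hyperelliptic and the family lands in $\Mcal_3 \smallsetminus \Hcal_3$.

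\emph{The canonical-embedding morphism.} Given $C \to S$ with non-hyperelliptic genus three fibers, I set $\Ecal := p_*\omega_{C/S}$. By cohomology and base change this is locally free of rank three, since $h^0(\omega_{C_s})=3$ and $h^1(\omega_{C_s})=1$ are constant, and it commutes with base change. The relative canonical map $C \to \PP(\Ecal)$ is then a closed immersion: one checks this on fibers, where it is exactly the embedding of $C_s$ as a plane quartic, and invokes the standard criterion that an $S$-morphism from a proper flat $S$-scheme to a projective bundle which is a closed immersion on every fiber is a closed immersion. The image is fiberwise a smooth quartic, so we obtain an object $(C \hookrightarrow \PP(\Ecal) \to S)$ of $\Xcal_4$.

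\emph{Mutual inverseness.} The composite $\Mcal_3 \smallsetminus \Hcal_3 \to \Xcal_4 \to \Mcal_3 \smallsetminus \Hcal_3$ is manifestly the identity, so everything reduces to showing that the embedding datum of an arbitrary object of $\Xcal_4$ is canonically its relative canonical embedding. Using the isomorphism $\omega_{C/S}\cong \OO_P(1)|_C$ together with the restriction sequence $0 \to \OO_P(-3) \to \OO_P(1) \to \OO_C(1)\to 0$ and the vanishing $p_*\OO_P(-3)=R^1p_*\OO_P(-3)=0$, I would identify $p_*\omega_{C/S}$ with the rank three bundle defining $P$, producing a canonical isomorphism $P \cong \PP(p_*\omega_{C/S})$ compatible with $\iota$. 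The main obstacle is that on a nontrivial Severi--Brauer variety $\OO_P(1)$ need not exist globally; I would circumvent this by performing the identification étale-locally on $S$, where $P$ is a trivial $\PP^2$-bundle and $\OO_P(1)$ is available, and then descending, all constructions being canonical. Alternatively, since $3 \nmid 4$, the universal Severi--Brauer variety pulls back to an honest projectivized vector bundle by the argument in the proof of \Cref{thm:brauer Xd}, so $\OO_P(1)$ exists after the appropriate twist and the comparison can be made globally. Pinning down this twist so that the two natural transformations are genuine isomorphisms of stacks, rather than mere fiberwise bijections, is the delicate bookkeeping at the heart of the proof.
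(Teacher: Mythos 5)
Your proposal is correct and follows essentially the same route as the paper: a forgetful functor $\Xcal_4 \to \Mcal_3\smallsetminus\Hcal_3$ and a relative canonical embedding $C \hookrightarrow \PP(\pi_*\omega_{C/S})$ in the other direction, with the only real work being the identification of an arbitrary $P$ with $\PP(\pi_*\omega_{C/S})$. The one point where the paper is cleaner than your primary plan is exactly the issue you flag at the end: rather than working \'etale-locally to get access to $\OO_P(1)$ and then descending, the paper sidesteps the nonexistence of $\OO_P(1)$ on a nontrivial Severi--Brauer bundle by using the globally defined line bundle $L:=\omega_{P/S}\otimes\Ocal(C)$, which has relative degree $-3+4=1$, so that $P\simeq\PP(\rho_*L)$ canonically; then $L|_C\simeq\omega_{C/S}\otimes\pi^*M$ gives $\PP(\pi_*(L|_C))\simeq\PP(\pi_*\omega_{C/S})$, and the restriction sequence $0\to\omega_{P/S}\to L\to L|_C\to 0$ together with the vanishing of $\rho_*\omega_{P/S}$ and $R^1\rho_*\omega_{P/S}$ identifies $\rho_*L$ with $\pi_*(L|_C)$. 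This is precisely your ``alternative'' argument using $\gcd(3,4)=1$, made explicit with the twist $a=b=1$, and it lets the whole comparison be done globally with no descent bookkeeping.
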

\begin{proof}
    Given an object $(C\hookrightarrow P \to S)$ of $\Xcal_4$, by forgetting the inclusion we get a smooth, non-hyperelliptic genus three curve $(C\to S)$. This defines a morphism $f\colon\Xcal_4\to\Mcal_3\smallsetminus\Hcal_3$.
    
    Viceversa, given a smooth, non-hyperelliptic, genus three curve $\pi\colon C\to S$, we have the canonical, surjective morphism $\pi^*\pi_*\omega_{\pi}\to \omega_\pi$, where $\omega_\pi$ is the relative dualizing sheaf. This induces a morphism $C\hookrightarrow \PP(\pi_*\omega_\pi)\to S$, which is a closed embedding and realizes each geometric fiber as a smooth plane curve of degree four. 
    
    In this way we have a morphism $g\colon\Mcal_3\smallsetminus\Hcal_3\to \Xcal_4$ which constitutes an inverse to the previous map. Indeed, it is straightforward to check that $g\circ f={\rm id}$. On the other hand, given an object $C\overset{\iota}{\hookrightarrow} P \overset{\rho}{\to} S$, observe that $L:=\omega_{P/S}\otimes \Ocal(C)$ has degree one on every fiber of $P\to S$, hence we have a canonical isomorphism $P\simeq \PP(\rho_*L)$.
    
    Observe that there is an isomorphism $L|_C\simeq \omega_{C/S}\otimes \pi^*M$ for some line bundle $M$ on the base. This implies that $\PP(\pi_*\omega_{C/S})$ is canonically isomorphic to $\PP(\pi_*(L|_C))$. The morphism $\rho_*L \to \rho_*(\iota_*(L|_C))\simeq \pi_*(L|_C)$ is an isomorphism, because both $\rho_*L(-C)$ and $R^1\rho_*(L(-C))$ are zero; this implies that $\PP(\pi_*(L|_C))\simeq \PP(\rho_*L)\simeq P$, hence $f\circ g={\rm id}$.
\end{proof}
We will also need the following.
\begin{prop}[\cite{DilPicPositive}*{Proposition 3.1.3}]\label{prop:presentation M3 minus H3}
    Let $W_4$ be the vector space of quartic forms in three variables, endowed with the $\GLt$-action $A\cdot f(x_0,x_1,x_2):=\det(A)f(A^{-1}(x_0,x_1,x_2))$, and let $\Delta \subset W_4$ be the divisor of singular quartic forms. Then we have an isomorphism of Deligne--Mumford stacks
    \[ \Mcal_3\smallsetminus\Hcal_3 \simeq [ (W_4\smallsetminus \Delta)/\GLt]. \]
\end{prop}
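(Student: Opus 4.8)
The plan is to leverage the isomorphism $\Mcal_3\smallsetminus\Hcal_3 \simeq \Xcal_4$ established in \Cref{prop:M3-H3 iso X4}, together with the quotient presentation $\Xcal_4 \simeq [(\PP(W_4)\smallsetminus Z)/\PGLt]$ recalled in \Cref{sub:plane curves}, and to ``frame'' this presentation so as to replace $\PGLt$ by $\GLt$ and the projective space $\PP(W_4)$ by the affine cone $W_4$. The key observation is that for $d=4$ the determinant twist appearing in the $\GLt$-action $A\cdot f := \det(A)f(A^{-1}(x_0,x_1,x_2))$ is chosen precisely so that the central $\Gm \subset \GLt$ (scalar matrices $\lambda\cdot\mathrm{id}$) acts on $W_4$ by a nontrivial character: a scalar matrix $\lambda\cdot\mathrm{id}$ sends $f$ to $\lambda^3 f(\lambda^{-1}(x_0,x_1,x_2)) = \lambda^{3-4}f = \lambda^{-1}f$. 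Thus the scalars act by the weight $-1$ character, which is exactly what is needed to rigidify the $\Gm$-gerbe structure.

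First I would make precise the relationship between the framed stack $\Xcal_4^{\fr}=[(\PP(W_4)\smallsetminus Z)/\GLt]$ of \Cref{sub:plane curves} and the affine presentation $[(W_4\smallsetminus\Delta)/\GLt]$. Note that $W_4\smallsetminus\Delta$ is the total space of the complement of the zero section in the tautological line bundle over $\PP(W_4)$, restricted to $\PP(W_4)\smallsetminus Z$ (here $\Delta$ is the affine cone over $Z$, minus the origin, which equals the full preimage of $Z$ since $\Delta$ is a cone). The residual $\Gm$-action on the fibers of $W_4\smallsetminus\Delta \to \PP(W_4)\smallsetminus Z$ is the scaling action, and by the weight computation above this coincides with the action of the central $\Gm\subset\GLt$. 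Consequently the two actions of $\Gm$ — the one from scaling the cone and the one from the center of $\GLt$ — can be played against each other: quotienting $W_4\smallsetminus\Delta$ by the scaling $\Gm$ recovers $\PP(W_4)\smallsetminus Z$ with its residual $\PGLt$-action, which by the framing identification is exactly $\Xcal_4$.

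The cleanest way to organize this is to exhibit a chain of isomorphisms of quotient stacks. Since the central $\Gm$ acts on $W_4\smallsetminus\Delta$ with weight $-1$ (in particular freely, the action being a scaling), the quotient map $W_4\smallsetminus\Delta \to \PP(W_4)\smallsetminus Z$ is a $\Gm$-torsor, and forming the stack quotient by $\GLt = \Gm\cdot\SLt$ in two stages gives
\[
[(W_4\smallsetminus\Delta)/\GLt] \simeq [\,[(W_4\smallsetminus\Delta)/\Gm]\,/\PGLt\,] \simeq [(\PP(W_4)\smallsetminus Z)/\PGLt] \simeq \Xcal_4,
\]
where the middle isomorphism uses that $\GLt/\Gm \simeq \PGLt$ and the last is the presentation of $\Xcal_4$. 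Composing with \Cref{prop:M3-H3 iso X4} yields the desired isomorphism $\Mcal_3\smallsetminus\Hcal_3 \simeq [(W_4\smallsetminus\Delta)/\GLt]$.

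I expect the main obstacle to be the bookkeeping of the $\Gm$-actions and characters: one must verify that the scaling action on the cone $W_4\smallsetminus\Delta$ genuinely agrees, as a subgroup action, with the central $\Gm\subset\GLt$ equipped with the twisted action, so that the two-step quotient is legitimate. This hinges entirely on the determinant twist $\det(A)^1$ (for $d=4$, since $d/3$ is not an integer, one cannot use the untwisted $\PGLt$-representation, which is precisely why the $\GLt$-presentation is the natural one here) producing the weight $-1$ rather than some other weight. Everything else is formal manipulation of quotient stacks, but this character computation is the crux and deserves to be spelled out carefully; it is also the reason the statement is specific to genus three (equivalently $d=4$) rather than general plane curves.
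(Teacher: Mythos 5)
Your argument is correct, but it is worth noting that the paper does not actually prove this proposition: it is imported from \cite{DilPicPositive}*{Proposition 3.1.3}, where the presentation is obtained by a direct functorial argument (an $S$-point of $[(W_4\smallsetminus\Delta)/\GLt]$ is a rank three bundle $E$ together with a $\det$-twisted quartic form cutting out a smooth curve in $\PP(E)$, and the determinant twist is exactly what identifies $E$ canonically with $\pi_*\omega_\pi$ of the resulting curve). Your route is genuinely different: you reduce the statement to \Cref{prop:M3-H3 iso X4} together with the presentation $\Xcal_4\simeq[(\PP(W_4)\smallsetminus Z)/\PGLt]$ recalled in \Cref{sub:plane curves}, and identify $[(W_4\smallsetminus\Delta)/\GLt]$ with the latter by quotienting in two stages by the central $\Gm\subset\GLt$. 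The key verification is exactly the one you isolate: the center acts on a quartic form with weight $3-4=-1$, hence freely, so $[(W_4\smallsetminus\Delta)/\Gm]$ is the scheme $\PP(W_4)\smallsetminus Z$ (note that $\Delta$ is a cone containing the origin, so $W_4\smallsetminus\Delta$ is precisely the preimage of $\PP(W_4)\smallsetminus Z$), while the residual $\PGLt$-action on the projectivization is the standard one because the $\det$-twist is by a character and so acts trivially on $\PP(W_4)$; with the untwisted action the weight would be $-4$ and the first-stage quotient would be a $\mu_4$-gerbe, so the proposition would fail. There is no circularity, since the paper's proof of \Cref{prop:M3-H3 iso X4} does not use the present proposition. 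What your argument buys is brevity given what is already available in the paper; what the direct functorial argument buys is independence from the $\PGLt$-presentation of $\Xcal_4$ (also stated without proof here) and an explicit universal family over $[(W_4\smallsetminus\Delta)/\GLt]$, which is what makes the presentation usable for the equivariant computations later in the paper. One small point of care: the central $\Gm$ acts by $\lambda\cdot f=\lambda^{-1}f$, the \emph{inverse} of the tautological scaling; this changes nothing for the quotient, but if one wants the resulting isomorphism to be the one under which $\Mcal_3\smallsetminus\Hcal_3\to\Xcal_4^{\fr}$ becomes the $\Gm$-torsor used later, the identification of the two $\Gm$-actions should be recorded with its sign.
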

A second instance of the principle above is provided by theta-characteristics. Recall that a theta-characteristic on a smooth curve $C$ over a field $\bfk$ is an isomorphism class of a line bundle $\theta$ such that $\theta^{\otimes 2}\simeq \omega_C$, where $\omega_C$ denotes the canonical bundle of the curve. The set of theta-characteristics in the Picard variety ${\rm Pic}^{g-1}(C)$ forms a torsor under the additive group ${\rm Jac}(C)_2$ of $2$-torsion points in the Jacobian of $C$, i.e. the connected component of the identity of ${\rm Pic}(C)$. In particular, a smooth curve of genus $g$ has exactly $2^{2g}$ theta-characteristics.

Theta-characteristics can be further subdivided in even or odd, depending on the parity of $h^0(C,\theta)$. A smooth curve $C$ of genus $g$ has $2^{g-1}(2^{g}-1)$ odd theta-characteristics and $2^{g-1}(2^{g}+1)$ even theta-characteristics.

If $C$ is a smooth curve of genus three, we deduce that there are exactly $28$ odd theta-characteristics, and for each of them we must have $h^0(C,\theta)=1$, because $h^0(C,\theta)\leq \deg(\theta)=2$. On the other hand, global sections of $\theta$ correspond to effective divisor of degree two on $C$, hence there are exactly $28$ distinct pairs of (non necessarily distinct) points $p+q$ such that $\Ocal(2p+2q)\simeq \omega_{C}$. Given a canonical embedding $C\subset \PP^2$, we have that $\omega_{C}\simeq \Ocal_{\PP^2}(1)|_C$, hence the previous statement can be reformulated as follows.
\begin{prop}\label{prop:bitang}
    Given a canonically embedded, smooth genus three curve $C\subset \PP^2_K$ over a field $K$, there is a bijection between the set of odd theta-characteristics of $C$ and the set of lines in the plane that are bitangent to $C$. In particular, every canonically embedded smooth curve of genus three has exactly $28$ bitangents.
\end{prop}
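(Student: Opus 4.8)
The plan is to use the canonical identification $\omega_C\simeq\Ocal_{\PP^2}(1)|_C$ to translate the line-bundle data of a theta-characteristic into the plane-geometric data of a bitangent line, and then to check that the resulting assignment is a bijection.

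First I would record the identification of global sections. The restriction map $\H^0(\PP^2,\Ocal_{\PP^2}(1))\to\H^0(C,\omega_C)$ is an isomorphism: both sides are three-dimensional, and it is injective because $C$ spans $\PP^2$ under its canonical embedding, so no nonzero linear form can vanish identically on $C$. Hence the effective canonical divisors on $C$ are exactly the hyperplane sections $\ell\cap C$ for lines $\ell\subset\PP^2$; by B\'ezout each such divisor has degree $4$, and the line $\ell$ is uniquely recovered from the divisor $\ell\cap C$.

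Next, for the forward map, take an odd theta-characteristic $\theta$. Since $\deg\theta=g-1=2$ we have $0\le h^0(C,\theta)\le 2$, so oddness forces $h^0(C,\theta)=1$; the unique section up to scalar cuts out a well-defined effective divisor $D_\theta=p+q$ of degree two with $\Ocal_C(D_\theta)\simeq\theta$. Squaring, $\Ocal_C(2D_\theta)\simeq\theta^{\otimes 2}\simeq\omega_C$, so $2D_\theta=2p+2q$ is an effective canonical divisor, hence equals $\ell_\theta\cap C$ for a unique line $\ell_\theta$. As this intersection divisor has even multiplicity at each point, $\ell_\theta$ is tangent to $C$ at both $p$ and $q$, i.e. it is a bitangent (a hyperflex when $p=q$), and I send $\theta\mapsto\ell_\theta$. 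Conversely, given a bitangent $\ell$ I write $\ell\cap C=2D$ with $D=p+q$ effective of degree two and set $\theta:=\Ocal_C(D)$; then $\theta^{\otimes 2}\simeq\Ocal_C(2D)\simeq\Ocal_{\PP^2}(1)|_C\simeq\omega_C$, so $\theta$ is a theta-characteristic, and $h^0(\theta)=1$ (it is $\ge 1$ since $D$ is effective, and $\le 1$ since $C$ is non-hyperelliptic, else $\theta$ would be a $g^1_2$), so $\theta$ is odd. These two constructions are mutually inverse, and the correspondence is injective in both directions because $2D_\theta=2D_{\theta'}$ as divisors forces $D_\theta=D_{\theta'}$ and hence $\theta\simeq\theta'$. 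Combined with the count of $2^{g-1}(2^g-1)=28$ odd theta-characteristics, this yields exactly $28$ bitangents.

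The delicate point is the passage between line bundles and honest divisors: one must ensure that the unique sections of $\theta$ and of $\omega_C$ determine actual divisors, not merely linear-equivalence classes, so that $\theta\mapsto\ell_\theta$ is well defined and injective, and that the scheme-theoretic intersection $\ell\cap C$ genuinely equals $2D_\theta$ rather than being only linearly equivalent to it. This is exactly where the degree matching supplied by B\'ezout and the isomorphism on global sections are used. Over a non-closed field one observes in addition that the entire construction is canonical, hence Galois-equivariant, so that a $K$-rational bitangent (whose two tangency points may be conjugate over $K$) corresponds to a $K$-rational odd theta-characteristic, while the count $28$ holds geometrically.
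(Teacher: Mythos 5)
Your proof is correct and follows essentially the same route as the paper, which likewise derives the bijection from the identification $\omega_C\simeq\Ocal_{\PP^2}(1)|_C$, the fact that an odd theta-characteristic of degree $g-1=2$ has $h^0=1$ and hence a unique effective divisor $p+q$ with $2p+2q$ canonical, and the count $2^{g-1}(2^g-1)=28$. You simply spell out the inverse map and the well-definedness points in more detail than the paper does.
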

The notion of theta-characteristics can be extended to families of curves: given $\pi\colon C\to S$ a smooth curve of genus three, a theta-characteristic consists of a line bundle $\theta$ on $C$ and an isomorphism $\theta^{\otimes 2} \overset{\sim}{\to} \omega_{\pi}$. It is known (see for instance \cite{MumSpin}) that the parity of the rank function of the sheaf $\pi_*\theta$ is constant, thus the definition of odd or even theta-function extends to families of curves as well. The datum $$(\pi\colon C\to S, \theta, \alpha\colon \theta^{\otimes 2}\overset{\sim}{\to}\omega_\pi)$$ with $\theta$ an odd (respectively even) theta-characteristic is called an \emph{odd spin curve} (respectively an \emph{even spin curve}).
\begin{prop}
    There exists a smooth Deligne--Mumford stack $\Scal^{-}_3$ whose objects are odd spin curves. The morphism $\Scal^{-}_3 \to \Mcal_3$ that forgets the theta-characteristic is a representable, finite and \'{e}tale morphism of degree $28$.
\end{prop}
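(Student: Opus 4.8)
The plan is to realize $\Scal^-_3$ as an open-and-closed component of the relative moduli of theta-characteristics, which I would construct inside a relative Picard scheme where representability and the étale property become transparent. First I would introduce the universal curve $\pi\colon\Ccal\to\Mcal_3$ together with its relative Jacobian $J={\rm Pic}^0_{\Ccal/\Mcal_3}$, an abelian algebraic space of relative dimension three over $\Mcal_3$, and the relative Picard spaces ${\rm Pic}^{g-1}$ and ${\rm Pic}^{2g-2}$, which are $J$-torsors. The relative dualizing sheaf $\omega_\pi$ determines a section $[\omega_\pi]\in{\rm Pic}^{2g-2}(\Mcal_3)$, and I would define the stack $\Scal_3$ of \emph{all} spin curves as the fiber over $[\omega_\pi]$ of the squaring morphism
\[ {\rm sq}\colon {\rm Pic}^{g-1}_{\Ccal/\Mcal_3} \longrightarrow {\rm Pic}^{2g-2}_{\Ccal/\Mcal_3}, \qquad \theta \longmapsto \theta^{\otimes 2}. \]
By construction an $S$-point of $\Scal_3$ is a family $(C\to S)$ in $\Mcal_3(S)$ together with a line bundle $\theta$ on $C$ satisfying $\theta^{\otimes 2}\simeq\omega_{C/S}$, i.e. a spin curve. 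Since $\theta$ is recorded as a point of the relative Picard space rather than as a sheaf carrying its scaling automorphisms, the forgetful morphism $\Scal_3\to\Mcal_3$ acquires no extra automorphisms (the $\pm 1$ acting on $(\theta,\alpha)$ is rigidified away), and is therefore representable, in fact by finite schemes over any atlas of $\Mcal_3$.

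The heart of the argument is to identify ${\rm sq}$, after trivializing the torsors étale-locally, with the multiplication-by-two isogeny $[2]\colon J\to J$. As ${\rm char}(\bfk)\neq 2$, this isogeny is finite étale of degree $2^{2g}$, so ${\rm sq}$ and hence its fiber $\Scal_3\to\Mcal_3$ are finite étale of degree $2^{2g}=64$, with fibers that are torsors under the $2$-torsion group scheme $J[2]$. Because $\Mcal_3$ is a smooth Deligne--Mumford stack and $\Scal_3\to\Mcal_3$ is finite étale, $\Scal_3$ is again a smooth Deligne--Mumford stack.

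To extract the odd part, I would invoke the constancy of the parity of $h^0(C_s,\theta_s)$ in flat families (already recalled above, see \cite{MumSpin}): the parity function is locally constant on $\Scal_3$, so $\Scal_3=\Scal^+_3\sqcup\Scal^-_3$ splits as a disjoint union of open-and-closed smooth substacks. Restricting the finite étale morphism to the odd component $\Scal^-_3$ yields a representable, finite and étale morphism $\Scal^-_3\to\Mcal_3$, whose degree equals the number of odd theta-characteristics on a geometric fiber, namely $2^{g-1}(2^g-1)=28$ for $g=3$; smoothness and representability are inherited from $\Scal_3$.

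The step I expect to require the most care is the identification of ${\rm sq}$ with $[2]$ and the verification that it is finite étale of the stated degree: one must work étale-locally on $\Mcal_3$ to trivialize the $J$-torsors (the universal curve need not carry a global section), check that ${\rm sq}$ becomes $x\mapsto 2x$ up to translation, and confirm that the degree and étaleness descend to $\Mcal_3$. Once this is in place, Mumford's parity result produces the splitting immediately and the degree count $2^{g-1}(2^g-1)=28$ finishes the proof.
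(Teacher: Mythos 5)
Your argument is correct, and it is worth noting that the paper itself states this proposition without proof, treating it as a classical fact (the surrounding text only cites \cite{MumSpin} for the constancy of the parity); so there is no "paper proof" to compare against, and your write-up supplies the standard construction in full. The route you take — realize the stack of all spin structures as the fiber of the squaring map ${\rm sq}\colon {\rm Pic}^{g-1}_{\Ccal/\Mcal_3}\to{\rm Pic}^{2g-2}_{\Ccal/\Mcal_3}$ over the section $[\omega_\pi]$, identify ${\rm sq}$ \'etale-locally with the multiplication-by-two isogeny on the Jacobian (finite \'etale of degree $2^{2g}=64$ since ${\rm char}(\bfk)\neq 2$), and then cut out the odd component via Mumford's parity theorem, giving degree $2^{g-1}(2^g-1)=28$ — is the correct and essentially canonical one, and all the numerics check out. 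Two fine points deserve emphasis. First, you are right to flag the rigidification: if one took the paper's preceding definition of a spin curve as a triple $(C,\theta,\alpha)$ literally, with morphisms of line bundles compatible with the $\alpha$'s, the resulting stack would carry an extra $\mu_2$ in each automorphism group and the forgetful map would be a $\mu_2$-gerbe over your stack, hence \emph{not} representable; the representability asserted in the Proposition forces the rigidified (Picard-space) interpretation, which is exactly what you construct, and is what the paper actually uses later (the fiber over a field-valued point $C/F$ is the finite \'etale $F$-scheme of $28$ odd theta-characteristics, i.e.\ the bitangent scheme). Second, a section of the relative Picard space over a general base $S$ need not be represented by an honest line bundle on $C_S$ (there is a Brauer obstruction when the curve has no section), so the objects of your $\Scal_3^-$ are, strictly speaking, curves with a section of ${\rm Pic}^{g-1}$ squaring to $[\omega]$ rather than literal triples; this discrepancy is invisible on field-valued points and is immaterial for everything the paper does with $\Scal_3^-$, but it is the honest price of the representability statement.
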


\subsection{Extending invariants from moduli of plane curves}
In this subsection we will build on the results of the previous section to obtain a partial computation of the cohomological invariants of $\Mcal_3$ and $\Mcal_3 \smallsetminus \Hcal_3$. First we show that the low degree invariants of $\Xcal_{d}^\fr$ and $\Mcal_3\smallsetminus\Hcal_3$ coincide.
\begin{prop}\label{prop:nonHypiso}
The $\Gm$-torsor $\Mcal_3 \smallsetminus \Hcal_3 \xrightarrow{\pi} \Xcal_{4}^\fr$ induces an isomorphism on cohomological invariants.
\end{prop}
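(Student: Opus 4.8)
The plan is to turn the statement into a single injectivity claim about multiplication by a divisor class. By \Cref{prop:presentation M3 minus H3} together with the presentation $\Xcal_4^{\fr}=[(\PP(W_4)\smallsetminus Z)/\GLt]$, the map $\pi$ is induced by the $\GLt$-equivariant projection $W_4\smallsetminus\Delta\to U:=\PP(W_4)\smallsetminus Z$, which is the $\Gm$-torsor of nonzero vectors in the tautological line bundle $L=\OO_U(-1)$. The decisive point is the $\GLt$-linearization: in the presentation of $\Mcal_3\smallsetminus\Hcal_3$ the action on $W_4$ carries the determinant twist $A\cdot f=\det(A)f(A^{-1}x)$, while the classes $h,c_1$ used to compute $[Z]_{\GLt}$ in \Cref{lm:class Z} refer to the untwisted action on $\PP(W_4)$. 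Hence $L$ is the untwisted $\OO(-1)$ tensored with the character $\det$, whose equivariant first Chern class is $c_1$, so that
\[ c_1^{\GLt}(L)=c_1-h\in {\rm A}^1_{\GLt}(U,\M). \]
Via $\Inv([X/\GLt],\M)={\rm A}^0_{\GLt}(X,\M)$, it then suffices to show that $\pi^*\colon {\rm A}^0_{\GLt}(U,\M)\to {\rm A}^0_{\GLt}(W_4\smallsetminus\Delta,\M)$ is an isomorphism.

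Next I would realize $W_4\smallsetminus\Delta$ as the complement of the zero section $U\hookrightarrow L$ in the total space of $L$, and write the associated localization sequence. Combining homotopy invariance ${\rm A}^\bullet_{\GLt}(L,\M)\cong {\rm A}^\bullet_{\GLt}(U,\M)$ with the self-intersection identity $i^*i_*=c_1^{\GLt}(L)\cdot(-)$, the codimension-zero part becomes
\[ 0\to {\rm A}^0_{\GLt}(U,\M)\xrightarrow{\pi^*}{\rm A}^0_{\GLt}(W_4\smallsetminus\Delta,\M)\xrightarrow{\partial}{\rm A}^0_{\GLt}(U,\M)\xrightarrow{\,\cdot(c_1-h)\,}{\rm A}^1_{\GLt}(U,\M). \]
Thus $\pi^*$ is always injective, and it is an isomorphism precisely when multiplication by $c_1-h$ is injective on ${\rm A}^0_{\GLt}(U,\M)$; the whole proposition reduces to this injectivity. (Note the twist is essential: replacing $c_1-h$ by $-h$ would produce a genuine kernel, reflecting the fact that the corresponding untwisted torsor would not induce an isomorphism.)

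To establish the injectivity I would run the localization sequence for $Z\subset\PP(W_4)$ as a ladder, using that multiplication by $c_1-h$ (pulled back from $\PP(W_4)$) commutes with the restriction maps and with the boundary $\partial_Z$. Given $x$ with $(c_1-h)x=0$, applying $\partial_Z$ gives $(c_1-h)|_Z\,\partial_Z(x)=0$; if multiplication by $(c_1-h)|_Z$ is injective on the image of $\partial_Z$ in ${\rm A}^0_{\GLt}(Z,\M)$, then $\partial_Z(x)=0$ and $x$ is pulled back from ${\rm A}^0_{\GLt}(\PP(W_4),\M)=\M^{\bullet}(\bfk)$. On these ``constant'' classes injectivity is elementary: with the projective bundle formula ${\rm A}^1_{\GLt}(\PP(W_4),\M)=c_1\M^{\bullet}(\bfk)\oplus h\M^{\bullet}(\bfk)$ and the relation $[Z]_{\GLt}=9(3h-4c_1)$ of \Cref{lm:class Z}, comparing coefficients in the basis $\{h,c_1\}$ shows $(c_1-h)c=0$ forces $c=0$.

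The main obstacle is the injectivity of multiplication by $(c_1-h)|_Z$ on the image of $\partial_Z$, that is, on the torsion classes (such as the $\beta_1$-class) that do not descend from the ambient projective space. To control these I would pass to the desingularization $\widetilde{Z}$, which by \cite{FulVis}*{Proposition 4.2} and \Cref{lm:tensor} is a tower of projective bundles over $\PP^2$, so that ${\rm A}^\bullet_{\GLt}(\widetilde{Z},\M)={\rm CH}^\bullet_{\GLt}(\widetilde{Z})\otimes\M^{\bullet}(\bfk)$ is a free $\M^{\bullet}(\bfk)$-module on which multiplication by a non-torsion class is injective by the projective bundle formula; one then transports this back along $\widetilde{Z}_{\irr}\to Z_{\irr}$, which governs the relevant classes. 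Keeping track of the determinant twist (so as to get $c_1-h$ rather than $-h$) and this final transport are where the care is needed, while the rest follows formally from the localization sequence and homotopy invariance.
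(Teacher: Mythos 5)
Your reduction of the statement to the injectivity of multiplication by $c_1^{\GLt}(L)=c_1-h$ on ${\rm A}^0_{\GLt}(\PP(W_4)\smallsetminus Z,\M)$ is correct in principle: the identification of $\pi$ with the $\Gm$-torsor of the determinant-twisted tautological bundle, the four-term sequence obtained from the localization sequence of the zero section together with homotopy invariance and the self-intersection formula, and the remark that the twist is what makes the relevant Chern class primitive are all sound. But the step carrying all the content --- the injectivity itself --- is only sketched, and the sketch has genuine gaps. First, ``multiplication by a non-torsion class is injective on a free $\M^{\bullet}(\bfk)$-module'' is false as stated (multiplication by $2x$, with $x$ a basis vector, kills $2$-torsion); what you actually need is that $c_1-h$ and its restriction to $\widetilde{Z}$ are \emph{primitive}, i.e.\ generate direct summands of ${\rm CH}^1_{\GLt}$. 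Second, the ``transport back along $\widetilde{Z}_{\irr}\to Z_{\irr}$'' is not free: $Z$ is singular, so there is no pullback on Chow groups with coefficients, and the restriction ${\rm A}^1_{\GLt}(Z_{\irr},\M)\to{\rm A}^1_{\GLt}(Z_{\irr}\smallsetminus\overline{Z}_{\binod},\M)$ along which one would want to compare is not injective since $\overline{Z}_{\binod}$ is a divisor in $Z_{\irr}$; moreover your coefficient comparison in the basis $\{h,c_1\}$ ignores the contributions of the degree-shifted classes (the $\alpha$'s and $\beta$'s) to the images of the relevant Gysin pushforwards. Third, the proposition concerns \emph{all} cohomological invariants, while the image of $\partial_Z$ is only determined in the paper in degrees $\leq 1$; as written your argument could at best establish the isomorphism in the range where these groups are under control.

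All of this work is avoidable, and the short argument is the one your proposal misses: by \Cref{prop:M3-H3 iso X4} the composition $\Mcal_3\smallsetminus\Hcal_3\to\Xcal_4^{\fr}\to\Xcal_4$ is an isomorphism of stacks, so the composite pullback $\Inv(\Xcal_4,\M)\to\Inv(\Xcal_4^{\fr},\M)\xrightarrow{\pi^*}\Inv(\Mcal_3\smallsetminus\Hcal_3,\M)$ is an isomorphism and hence $\pi^*$ is surjective; injectivity of $\pi^*$ is automatic because a $\Gm$-torsor is a smooth-Nisnevich covering and cohomological invariants form a sheaf in that topology. This covers all degrees and all cycle modules at once, with no computation of fundamental classes needed.
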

\begin{proof}
First note that the pullback $\pi^*$ is injective because a $\Gm$-torsor is always a smooth-Nisnevich covering. We know by \Cref{prop:M3-H3 iso X4} that the composition $\Mcal_3 \smallsetminus\Hcal_3 \to \Xcal_4$ is an isomorphism, which implies that the composition
\[
\Inv(\Xcal_4,\M) \to \Inv(\Xcal_4^{\rm fr},\M) \to \Inv(\Mcal_3 \smallsetminus \Hcal_3, \M)
\]
is an isomorphism (note that we can pullback cohomological invariants through non representable maps), showing that $\pi^*$ must be surjective as well.
\end{proof}

The stack $\Hcal_3$ of hyperelliptic curves of genus three is a smooth Deligne--Mumford stack, whose generic stabilizer is isomorphic to $\mu_2$, and it forms a Cartier divisor in $\Mcal_3$. The open embedding $\Mcal_3\smallsetminus\Hcal_3 \hookrightarrow \Mcal_3$ induces an inclusion at the level of cohomological invariants. We will show that this inclusion is surjective on $2$-torsion invariants.

\begin{lm}
Let $M_3^{\sm}, H_3^{\sm}$ respectively be the smooth locus of the \emph{moduli space} of smooth genus three curves and hyperelliptic smooth genus three curves. Then:
\begin{enumerate}
    \item $M_3^{\sm}$ is the locus of curves which either have no nontrivial automorphisms or whose only nontrivial automorphism is the hyperelliptic involution.
    \item $H_3^{\sm}$ is the Cartier divisor in $M_3^{sm}$ of hyperelliptic curves whose automorphism group is exactly $\ZZ/2\ZZ$.
    \item $M_3 \smallsetminus M_3^{\sm}$ has codimension $\geq$ 2 in $M_3$.
\end{enumerate}
\end{lm}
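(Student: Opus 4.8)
The plan is to describe $M_3$ \'etale-locally around a point $[C]$ as the quotient ${\rm Def}(C)/{\rm Aut}(C)$, where ${\rm Def}(C) = \H^1(C,T_C)$ is the smooth, $6$-dimensional versal deformation space and ${\rm Aut}(C)$ acts linearly, and then to apply the Chevalley--Shephard--Todd theorem: the point $[C]$ is smooth in $M_3$ if and only if the image $\bar{G}$ of ${\rm Aut}(C)$ in ${\rm GL}({\rm Def}(C))$ is generated by pseudo-reflections, i.e. by elements fixing a hyperplane. This reduces parts (1) and (2) to the representation-theoretic claim that $\bar{G}$ is a reflection group exactly when it is trivial, together with the identification of which automorphism groups make $\bar{G}$ trivial.

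Identifying ${\rm Def}(C) = \H^0(C,\omega_C^{\otimes 2})^{\vee}$ by Serre duality, I would first record that the hyperelliptic involution $\iota$ acts by $-1$ on $\H^0(C,\omega_C)$ and hence by $+1$ on quadratic differentials, so $\iota$ acts trivially on ${\rm Def}(C)$; thus $\langle\iota\rangle$ never obstructs smoothness, and $\bar{G}$ equals ${\rm Aut}(C)$ in the non-hyperelliptic case and the reduced group ${\rm Aut}(C)/\langle\iota\rangle$ in the hyperelliptic case (here ${\rm char}(\bfk)\neq 2$ is used to make $\iota$ semisimple). Next I would show that no nontrivial automorphism acts as a pseudo-reflection. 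In the non-hyperelliptic case $C\subset\PP^2$ is a plane quartic and $\H^0(C,\omega_C^{\otimes 2})\cong \Sym^2(V^{\vee})$ with $V$ the $3$-dimensional space; a short eigenvalue computation shows that if five of the six products $\{\alpha^2,\beta^2,\gamma^2,\alpha\beta,\beta\gamma,\gamma\alpha\}$ of the eigenvalues of an automorphism coincide (a scale-invariant condition, so independent of the chosen lift to ${\rm GL}_3$) then all six do, forcing the automorphism to be trivial in $\PGLt$. In the hyperelliptic case $\bar{G}\subset {\rm PGL}_2$ acts on ${\rm Def}(C)$, whose $\bar{G}$-representation contains $\Sym^4$ of the standard representation as a summand, and any nontrivial finite-order element of ${\rm PGL}_2$ has at least two eigenvalues different from $1$ already on this summand, hence cannot be a pseudo-reflection. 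In either case $\bar{G}$ contains no nontrivial pseudo-reflection, so by Chevalley--Shephard--Todd the quotient is smooth iff $\bar{G}=1$, i.e. iff ${\rm Aut}(C)\subseteq\langle\iota\rangle$; this proves (1), and (2) follows at once since $H_3^{\sm}$ is the hyperelliptic locus inside the smooth (hence regular) variety $M_3^{\sm}$, where it is a Cartier divisor.

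For part (3) I would argue by a dimension count on the automorphism strata. By (1) the complement $M_3\smallsetminus M_3^{\sm}$ is exactly the locus of curves with automorphisms beyond the generic ones. The largest non-hyperelliptic stratum consists of plane quartics admitting an involution: invariance under a fixed involution cuts out a linear subsystem of $\PP(W_4)$ of dimension $8$, and quotienting by the $4$-dimensional centralizer of the involution in $\PGLt$ produces a locus of dimension $4$, i.e. codimension $2$ in the $6$-dimensional $M_3$. Likewise, hyperelliptic curves whose automorphisms strictly contain $\langle\iota\rangle$ correspond to $8$ branch points on $\PP^1$ carrying an extra symmetry, which is codimension $\geq 1$ inside the $5$-dimensional $\Hcal_3$ and therefore codimension $\geq 2$ in $M_3$. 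Together these give codimension $\geq 2$.

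The main obstacle is twofold. First, the reduction to ${\rm Def}(C)/{\rm Aut}(C)$ together with Chevalley--Shephard--Todd requires ${\rm Aut}(C)$ to act linearly and semisimply, which is automatic in characteristic zero but in characteristic $p>2$ forces one to restrict to tame automorphisms and to verify that the relevant strata and the triviality-of-$\iota$ computation are unaffected. Second, for the codimension bound in (3) one must be certain that no automorphism stratum of codimension one has been overlooked; here I would invoke the known classification of automorphism groups of plane quartics and of hyperelliptic genus three curves to confirm that the involution strata identified above are genuinely the largest.
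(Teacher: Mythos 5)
The paper does not actually prove this lemma: it cites Oort's paper for (1) and (3) and notes that the same reasoning gives (2). Your proposal reconstructs what is essentially Oort's argument (the local model ${\rm Def}(C)/{\rm Aut}(C)$, Chevalley--Shephard--Todd, and stratum dimension counts), which is the right architecture. But there is a genuine error at the central step. You claim that the hyperelliptic involution $\iota$ acts by $+1$ on $\H^0(C,\omega_C^{\otimes 2})$, hence trivially on ${\rm Def}(C)$, deducing this from the fact that it acts by $-1$ on $\H^0(C,\omega_C)$. That deduction implicitly uses surjectivity of the multiplication map $\Sym^2\H^0(\omega_C)\to\H^0(\omega_C^{\otimes 2})$, which by Max Noether's theorem fails precisely for hyperelliptic curves: writing $C\colon y^2=f(x)$, the invariant quadratic differentials $x^i(dx)^2/f(x)$, $i=0,\dots,4$, span only a $5$-dimensional subspace, while $(dx)^2/y$ spans a $1$-dimensional $(-1)$-eigenspace. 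So $\iota$ acts on the $6$-dimensional space ${\rm Def}(C)$ as a pseudo-reflection, not trivially. (In general the $(-1)$-eigenspace has dimension $g-2$; that this equals $1$ exactly for $g=3$ is the entire reason the lemma holds in genus $3$ and fails for $g\geq 4$.) Your picture is also internally inconsistent: if every first-order deformation of a hyperelliptic curve were $\iota$-invariant, the hyperelliptic locus would be open in $M_3$ rather than a divisor.

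The conclusions survive the correction, but the argument must be restated. For (1), the criterion becomes: $[C]$ is smooth iff the image of ${\rm Aut}(C)$ in ${\rm GL}({\rm Def}(C))$ is generated by pseudo-reflections, and your eigenvalue computations (no nontrivial element of ${\rm PGL}_3$ is a pseudo-reflection on $\Sym^2 V^{\vee}$; no nontrivial element of ${\rm PGL}_2$ has fewer than two non-unit eigenvalues on the $\Sym^4$-type summand) correctly show that the only available pseudo-reflection is $\bar\iota$ itself, whence smoothness iff ${\rm Aut}(C)\subseteq\langle\iota\rangle$. The corrected picture also yields (2) with more content than your appeal to regularity: locally $M_3\cong{\rm Def}(C)/\langle\iota\rangle$ is the quotient by a single reflection and $H_3$ is exactly the image of the fixed hyperplane ${\rm Def}(C)^{\iota}$, i.e.\ the branch divisor, visibly Cartier; and the identification of $H_3^{\sm}$ (the smooth locus of $H_3$ itself, which is what the statement asserts) requires rerunning the reflection criterion for the reduced group acting on the $5$-dimensional space ${\rm Def}(C)^{\iota}$, which your $\Sym^4$ count supplies. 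Two minor points: the characteristic-$p$ worry you flag is benign, since the implication ``smooth quotient $\Rightarrow$ generated by pseudo-reflections'' (Serre's direction) holds without non-modularity and the converse is only ever needed for the order-$2$ group $\langle\iota\rangle$; and for (3) the appeal to the classification can be replaced by running your linear-system count for each prime order, all of which give dimension at most $4$.
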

\begin{proof}
 The first statement is proven in \cite{Oort}. The same reasoning, restricted to $H_3$ and its image, proves the second statement, and the third is \cite{Oort}*{Lm. 2}.
\end{proof}

Set $\Mcal_3^{\sm}:=\Mcal_3\times_{M_3} M_3^{\sm}$. Observe that as the complement of $\Mcal_3^{\sm}$ in $\Mcal_3$ has codimension $\geq 2$ we have
\[ \Inv(\Mcal_3, \M) \simeq \Inv(\Mcal_3^{\sm}, \M). \]

\begin{prop}\label{prop:ismomroot}
Let $M^{\sm}_{3, \textnormal{root}}$ be the order two root stack of $M_3^{\sm}$ along the Cartier divisor $H_3^{\sm}$. We have an isomorphism:
\[\Mcal_3^{\sm} \simeq M^{\sm}_{3, \textnormal{root}}.\]
\end{prop}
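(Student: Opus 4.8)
The plan is to realize the isomorphism through the universal property of the root stack, which reduces the statement to a single local computation describing how the hyperelliptic involution acts in the normal direction to $\Hcal_3$. Recall that giving a morphism from a stack $T$ to the order two root stack $M^{\sm}_{3,\textnormal{root}}$ amounts to giving a morphism $T\to M_3^{\sm}$ together with a line bundle $N$ on $T$, a section of $N$, and an isomorphism $N^{\otimes 2}\simeq \pi^{*}\Ocal(H_3^{\sm})$ carrying the square of the section to the pullback of the tautological section of $\Ocal(H_3^{\sm})$. So the heart of the matter is to produce a canonical square root of $\pi^{*}\Ocal(H_3^{\sm})$ on $\Mcal_3^{\sm}$, where $\pi\colon\Mcal_3^{\sm}\to M_3^{\sm}$ denotes the coarse moduli morphism.

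First I would record the stacky structure of $\pi$. By the previous Lemma, over $M_3^{\sm}\smallsetminus H_3^{\sm}$ the curves are non-hyperelliptic with trivial automorphisms, so $\pi$ is an isomorphism there; along the hyperelliptic divisor $\Hcal_3^{\sm}\subset\Mcal_3^{\sm}$ (the preimage of $H_3^{\sm}$) the automorphism group is $\mu_2$, generated by the hyperelliptic involution $\iota$, which is $\mu_2$ since ${\rm char}(\bfk)\neq 2$. The key step is then to show that $\iota$ acts by $-1$ on the normal bundle of $\Hcal_3^{\sm}$ in $\Mcal_3^{\sm}$, equivalently that $\pi$ is ramified to order two along $\Hcal_3^{\sm}$ so that $\pi^{*}\Ocal(H_3^{\sm})\simeq\Ocal_{\Mcal_3^{\sm}}(2\Hcal_3^{\sm})$. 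Using the local structure theorem for tame Deligne--Mumford stacks, étale-locally over a point of $H_3^{\sm}$ we may write $\Mcal_3^{\sm}\simeq[V/\mu_2]$ with $V$ smooth of dimension six and $\mu_2=\langle\iota\rangle$ acting with fixed locus $V^{\iota}$. Now $V^{\iota}$ is exactly the locus of curves carrying $\iota$, that is the hyperelliptic locus lying over $\Hcal_3^{\sm}$, which is a divisor; hence $\iota$ fixes a hyperplane and therefore acts by $-1$ on the remaining one-dimensional normal direction. (The action cannot be trivial, since a trivial action would make $\Mcal_3^{\sm}$ a $\mu_2$-gerbe near the divisor and force the nearby non-hyperelliptic curves to carry $\iota$ as well.) This yields the étale-local model $\bA^5\times[\bA^1/\mu_2]$, with $\mu_2$ acting by $u\mapsto -u$.

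With the order two ramification in hand, I would set $N:=\Ocal_{\Mcal_3^{\sm}}(\Hcal_3^{\sm})$, the line bundle of the reduced hyperelliptic divisor on the stack, together with its tautological section $\tau$ cutting out $\Hcal_3^{\sm}$; the ramification identity $\pi^{*}H_3^{\sm}=2\Hcal_3^{\sm}$ furnishes the isomorphism $N^{\otimes 2}\simeq\pi^{*}\Ocal(H_3^{\sm})$ matching $\tau^{\otimes 2}$ with $\pi^{*}s$. By the universal property this defines a morphism $\Phi\colon\Mcal_3^{\sm}\to M^{\sm}_{3,\textnormal{root}}$ over $M_3^{\sm}$, and that $\Phi$ is an isomorphism can be checked étale-locally on the common coarse space $M_3^{\sm}$: it is an isomorphism over $M_3^{\sm}\smallsetminus H_3^{\sm}$, and along the divisor it matches the étale-local model $\bA^5\times[\bA^1/\mu_2]$ of the root stack computed above. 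The main obstacle is precisely the normal computation of the previous paragraph, namely the identification of the $-1$ action (equivalently the order two ramification of $\pi$): everything else is a formal consequence of the universal property, and it is exactly this point, governed by the geometry of the hyperelliptic involution, that distinguishes the root stack from a $\mu_2$-gerbe along $\Hcal_3^{\sm}$.
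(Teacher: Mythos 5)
Your argument is correct, and it reaches the conclusion by a genuinely different route from the paper's. The paper takes the order-two ramification of the coarse map along $H_3^{\sm}$ as given, invokes Geraschenko--Satriano's bottom-up characterization to obtain the factorization $\Mcal_3^{\sm}\to M^{\sm}_{3,\textnormal{root}}\to M_3^{\sm}$, and then concludes globally: the first arrow is representable by schemes (it induces an isomorphism on automorphism groups and is quasi-finite, hence representable by Laumon--Moret-Bailly), universally closed, surjective and generically an isomorphism, so Zariski's main theorem applies. You instead supply the ramification computation yourself --- the eigenvalue analysis of the hyperelliptic involution on a chart $[V/\mu_2]$, where the fixed locus is the hyperelliptic divisor and hence the involution must act by $-1$ on the one remaining normal direction --- then build the comparison map by hand from the universal property of the root stack using $N=\Ocal_{\Mcal_3^{\sm}}(\Hcal_3^{\sm})$, and verify that it is an isomorphism by matching \'etale-local models $\bA^5\times[\bA^1/\mu_2]$ over the common coarse space. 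What your route buys is a self-contained justification of the ramification order (which the paper merely asserts) and the avoidance of Zariski's main theorem; what it costs is reliance on the strong form of the local structure theorem (the chart $[V/\mu_2]$ must be the full pullback of an \'etale neighborhood of $M_3^{\sm}$, with stabilizers matching automorphism groups) together with the short verification, which you compress, that $\Phi$ is literally $t\mapsto u$ in those coordinates rather than merely a map between abstractly isomorphic local models. Both of these points are routine, so the proof stands.
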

\begin{proof}
    The coarse moduli space morphism $\Mcal_3^{\sm} \to M_3^{\sm}$ ramifies with order two over $H_3^{\sm}$, thus \cite{GS}*{Theorem 1} we have a factorization
\[ \Mcal_3^{\sm} \longrightarrow M^{\sm}_{3, \textnormal{root}} \longrightarrow M_3^{\sm}. \]
 The first morphism in the factorization above is representable by schemes: it is representable by algebraic spaces because it induces an isomorphism on automorphism groups, and it is quasi finite, so by \cite{GMB}*{Th\'eor\'eme A.2} it is representable by schemes. Moreover it is by construction universally closed, generically an isomorphism and surjective. Applying Zariski's main theorem, we deduce that $\Mcal_3^{\sm} \to M^{\sm}_{3, \textnormal{root}}$ is an isomorphism.
\end{proof}
 
As a consequence, we have an isomorphism of cohomological invariants
\[\Inv(\Mcal_3, \M) \simeq \Inv(\Mcal_3^{\sm}, \M) \simeq \Inv(M^{\sm}_{3, \textnormal{root}}, \M) \]
and we get

\begin{prop}\label{prop:extensionM3}
We have an equality 
\[ \Inv(M^{\sm}_3\smallsetminus H_3^{\sm}, \M) = \Inv(\Mcal_3\smallsetminus\Hcal_3, \M)\] 
and the subgroup 
\[\Inv(\Mcal_3,\M) \subseteq \Inv(\Mcal_3 \smallsetminus \Hcal_3,\M)\]
is composed of those invariants whose ramification at $H_3^{\sm}$ is of $2$-torsion.

In particular this implies that
\[\Inv(\Mcal_3\smallsetminus\Hcal_3, \M)_2 \simeq \Inv(\Mcal_3, \M)_2. \]
\end{prop}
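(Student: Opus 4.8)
The plan is to reduce both sides to the coarse moduli space and then feed the root-stack description of $\Mcal_3^{\sm}$ into the root-stack formula for cohomological invariants. First I would establish the equality $\Inv(M_3^{\sm}\smallsetminus H_3^{\sm},\M)=\Inv(\Mcal_3\smallsetminus\Hcal_3,\M)$. Under the isomorphism $\Mcal_3^{\sm}\simeq M^{\sm}_{3,\textnormal{root}}$ of \Cref{prop:ismomroot}, removing the ramification divisor turns a root stack back into its base, so the open substack $\Mcal_3^{\sm}\smallsetminus\Hcal_3^{\sm}$, where $\Hcal_3^{\sm}:=\Hcal_3\times_{M_3}M_3^{\sm}$, is identified with the honest scheme $M_3^{\sm}\smallsetminus H_3^{\sm}$. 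Since $\Mcal_3\smallsetminus\Mcal_3^{\sm}$ has codimension $\geq 2$ in $\Mcal_3$ (part (3) of the lemma, already used above to get $\Inv(\Mcal_3,\M)\simeq\Inv(\Mcal_3^{\sm},\M)$), its intersection with $\Mcal_3\smallsetminus\Hcal_3$ is still of codimension $\geq 2$, so the codimension $\geq 2$ invariance of cohomological invariants from \Cref{sec:coh} makes $\Inv(\Mcal_3\smallsetminus\Hcal_3,\M)\to\Inv(\Mcal_3^{\sm}\smallsetminus\Hcal_3^{\sm},\M)$ an isomorphism. Composing with the identification above yields the first claimed equality.

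Next I would characterize the subgroup $\Inv(\Mcal_3,\M)$. Combining $\Inv(\Mcal_3,\M)\simeq\Inv(\Mcal_3^{\sm},\M)$ with the root-stack isomorphism gives $\Inv(\Mcal_3,\M)\simeq\Inv(M^{\sm}_{3,\textnormal{root}},\M)$. The divisor $H_3^{\sm}$ is irreducible, being an open subscheme of the (irreducible) hyperelliptic locus in $M_3$, so the root-stack formula for cohomological invariants from \Cref{sec:coh} identifies $\Inv(M^{\sm}_{3,\textnormal{root}},\M)$ with the subgroup of $\Inv(M_3^{\sm}\smallsetminus H_3^{\sm},\M)$ consisting of the invariants whose ramification $\partial_{H_3^{\sm}}$ is $2$-torsion. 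Transporting this along the first equality, $\Inv(\Mcal_3,\M)$ becomes exactly the subgroup of $\Inv(\Mcal_3\smallsetminus\Hcal_3,\M)$ of invariants with $2$-torsion ramification at $H_3^{\sm}$, and the inclusion $\Inv(\Mcal_3,\M)\hookrightarrow\Inv(\Mcal_3\smallsetminus\Hcal_3,\M)$ is the one induced by the open immersion, which is injective.

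Finally, for the $2$-torsion statement I would argue that the ramification map is a group homomorphism, so if $\alpha\in\Inv(\Mcal_3\smallsetminus\Hcal_3,\M)$ satisfies $2\alpha=0$ then $2\partial_{H_3^{\sm}}(\alpha)=\partial_{H_3^{\sm}}(2\alpha)=0$; hence every $2$-torsion invariant automatically has $2$-torsion ramification and so lies in $\Inv(\Mcal_3,\M)$ by the characterization above. Since the inclusion is injective and its $2$-torsion part surjects onto $\Inv(\Mcal_3\smallsetminus\Hcal_3,\M)_2$, it restricts to an isomorphism $\Inv(\Mcal_3,\M)_2\simeq\Inv(\Mcal_3\smallsetminus\Hcal_3,\M)_2$. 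The hard part will not be any of these formal deductions but the bookkeeping tying them together: one must check that the ramification divisor of $M^{\sm}_{3,\textnormal{root}}$ really is $H_3^{\sm}$ and that, under the chain of isomorphisms, the open substack $\Mcal_3^{\sm}\smallsetminus\Hcal_3^{\sm}$ corresponds to $M_3^{\sm}\smallsetminus H_3^{\sm}$ in a way that matches the two boundary maps $\partial_{H_3^{\sm}}$, so that the $2$-torsion-ramification condition transports correctly. Once that compatibility is in place the statement follows purely from the listed properties of cohomological invariants.
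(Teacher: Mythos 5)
Your proposal is correct and follows essentially the same route as the paper: the root-stack formula identifies $\Inv(M^{\sm}_{3,\textnormal{root}},\M)\simeq\Inv(\Mcal_3,\M)$ with the invariants of $M_3^{\sm}\smallsetminus H_3^{\sm}$ having $2$-torsion ramification, the identification $\Inv(M^{\sm}_3\smallsetminus H_3^{\sm},\M)=\Inv(\Mcal_3\smallsetminus\Hcal_3,\M)$ comes from the isomorphism $M_3^{\sm}\smallsetminus H_3^{\sm}\simeq\Mcal_3^{\sm}\smallsetminus\Hcal_3^{\sm}$ plus the codimension $\geq 2$ argument, and the final claim follows because the ramification of a $2$-torsion invariant is automatically of $2$-torsion. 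Your extra remarks on irreducibility of $H_3^{\sm}$ and the compatibility of boundary maps are sensible bookkeeping that the paper leaves implicit.
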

\begin{proof}
Applying the formula for the invariants of a root stack \cite{DilPirRS}*{Thm. 1.5} we have that $\Inv(M^{\sm}_{3, \textnormal{root}}, \M)$ is equal to the subset of $\Inv(M^{\sm}_3\smallsetminus H_3^{\sm}, \M)$ whose elements have $2$-torsion ramification at $H_3^{\sm}$.

On the other hand, we have an isomorphism $M_3^{\sm}\smallsetminus H_3^{\sm} \simeq \Mcal_3^{\sm}\smallsetminus\Hcal_3^{\sm}$, and the complement of the latter in $\Mcal_3\smallsetminus \Hcal_3$ has codimension $\geq 2$. We deduce that
\[ \Inv(M^{\sm}_3\smallsetminus H_3^{\sm}, \M) = \Inv(\Mcal_3\smallsetminus\Hcal_3, \M) .\]   
The last statement follows trivially as the ramification of a $2$-torsion invariant is always of $2$-torsion.
\end{proof}

This shows that the subgroup $N\left[2 \right] \subset \Inv(\Mcal_3 \smallsetminus \Hcal_3, \M)$, which is $2$-torsion, belongs to the cohomological invariants of $\Mcal_3$. All that's left to do is understanding if the same happens for the $9$-torsion subgroup $\beta_1 \cdot \M^{\bullet}(\bfk)_9$.

\begin{cor}\label{cor:invM3}
We have 
\[
\Inv(\Mcal_3,\M^{\leq 2}) = \Inv(\bfk, \M^{\leq 2}) \oplus N\left[ 2\right]
\]
where $N\left[2\right]$ restricts on $\Mcal_3 \smallsetminus \Hcal_3$ to the pullback of the corresponding summand in $\Inv(\Xcal_{4}^{\fr},\M)$.
\end{cor}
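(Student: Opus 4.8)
The plan is to combine the two facts just established with a single residue computation. By \Cref{prop:nonHypiso} together with \Cref{prop: InvX4} applied to $d=4$ (so that $i_4=0$ and $3^{i_4}(4-1)^2=9$) we have
\[
\Inv^{\leq 2}(\Mcal_3\smallsetminus\Hcal_3,\M)=\M^{\leq 2}(\bfk)\oplus \beta_1\cdot\M^{\leq 1}(\bfk)_9\oplus N\left[2\right],
\]
while by \Cref{prop:extensionM3} the subgroup $\Inv(\Mcal_3,\M)\subseteq\Inv(\Mcal_3\smallsetminus\Hcal_3,\M)$ consists exactly of those invariants whose ramification at $H_3^{\sm}$ is of $2$-torsion. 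The task therefore reduces to deciding which of the three summands above satisfy this condition.

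The summand $\M^{\leq 2}(\bfk)$ is pulled back from $\Spec(\bfk)$ and so is unramified, and $N\left[2\right]$ is $2$-torsion, so its ramification is automatically $2$-torsion; as already noted, both lie in $\Inv(\Mcal_3,\M)$. It remains to show that $\beta_1\cdot\M^{\leq 1}(\bfk)_9$ contributes nothing new. Since every $x\in\M^{\leq 1}(\bfk)_9$ is constant, the compatibility of the module structure with the ramification map \cite{Rost}*{R3f} gives $\partial_{H_3^{\sm}}(\beta_1\cdot x)=\partial_{H_3^{\sm}}(\beta_1)\cdot x$. Writing $c:=\partial_{H_3^{\sm}}(\beta_1)\in\Inv^0(H_3^{\sm},\K_9)=\ZZ/9\ZZ$, the crux is to prove that $c$ is a \emph{unit}. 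Granting this, for $x\neq 0$ the class $c\cdot x$ is a nonzero $9$-torsion element of $\M^{\bullet}(\bfk)_9$, hence not $2$-torsion; moreover any class $u+\beta_1\cdot x$ with $u\in\M^{\leq 2}(\bfk)\oplus N\left[2\right]$ has ramification $\partial(u)+c\cdot x$ with $\partial(u)$ of $2$-torsion and $c\cdot x$ of $9$-torsion, so the sum is $2$-torsion only when $c\cdot x=0$, i.e. $x=0$. This yields exactly $\Inv(\Mcal_3,\M^{\leq 2})=\M^{\leq 2}(\bfk)\oplus N\left[2\right]$.

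Everything thus hinges on computing $c=\partial_{H_3^{\sm}}(\beta_1)$, and I would do this through the Picard group via the localization sequence in $\mu_9$-cohomology
\[
\H^1(\Mcal_3,\mu_9)\xrightarrow{\ j^*\}\H^1(\Mcal_3\smallsetminus\Hcal_3,\mu_9)\xrightarrow{\ \partial\ }\H^0(\Hcal_3,\ZZ/9\ZZ)
\]
combined with the Kummer sequence. Over $\overline{\bfk}$ one has $\H^1(\Mcal_3\smallsetminus\Hcal_3,\mu_9)={\rm Pic}(\Mcal_3\smallsetminus\Hcal_3)[9]$, and by \Cref{prop: InvX4} this group is $\ZZ/9\ZZ$, generated by $\beta_1$. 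Since ${\rm Pic}(\Mcal_3)=\ZZ\cdot\lambda$ is torsion-free \cite{ArbCor87}, the kernel of $\partial$, being the image of ${\rm Pic}(\Mcal_3)[9]=0$, is trivial; hence $\partial$ restricts to an injection $\ZZ/9\ZZ\hookrightarrow\ZZ/9\ZZ$, which is an isomorphism, so it carries the generator $\beta_1$ to a unit $c$. Concretely, this matches the relation $\left[\Hcal_3\right]=9\lambda$, which presents $\beta_1$ as the $9$-torsion class $\lambda$ trivialized on the complement by the canonical section of $\lambda^{\otimes 9}\simeq\OO(\Hcal_3)$, whose order of vanishing along $\Hcal_3$ is $1$; thus $c=1$. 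As residues commute with base change, the computation may be performed over $\overline{\bfk}$ and the conclusion holds for arbitrary $\bfk$.

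The main obstacle is precisely this last identification: matching the abstractly defined class $\beta_1$, which in \Cref{prop: InvX4} arises from the discriminant of the plane-quartic model, with the generator of the cyclic group ${\rm Pic}(\Mcal_3\smallsetminus\Hcal_3)$, and then invoking the torsion-freeness of ${\rm Pic}(\Mcal_3)$ to force injectivity of $\partial$. An alternative route is to compute $c$ directly from a one-parameter degeneration of a smooth plane quartic to a generic hyperelliptic curve, tracking the residue of $\beta_1$ along it; in either approach the delicate point is not that $c$ is nonzero but that it is a \emph{unit}, since this is what eliminates the \emph{entire} $3$-primary part of $\beta_1\cdot\M^{\leq 1}(\bfk)_9$ rather than only a subgroup of it.
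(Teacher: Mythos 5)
Your proposal is correct and follows essentially the same route as the paper: reduce via \Cref{prop:extensionM3} to checking which summands of $\Inv(\Mcal_3\smallsetminus\Hcal_3,\M^{\leq 2})$ have $2$-torsion ramification at the hyperelliptic divisor, and kill the $\beta_1\cdot\M^{\leq 1}(\bfk)_9$ summand by showing via the Kummer sequence and the torsion-freeness of ${\rm Pic}(\Mcal_3)$ that $\partial\beta_1$ is a unit in $\ZZ/9\ZZ$, so that $\partial(\beta_1\cdot x)=\partial(\beta_1)\cdot x$ is never $2$-torsion for $x\neq 0$. The only cosmetic differences are that you pass to $\overline{\bfk}$ and phrase the key step as injectivity of the Gysin boundary (note that the residue along $\Hcal_3\subset\Mcal_3$ differs from the residue along $H_3^{\sm}\subset M_3^{\sm}$ by the unit $2\in(\ZZ/9\ZZ)^\times$, so this changes nothing), and your concluding identification $c=1$ via $[\Hcal_3]=9\lambda$ is an unnecessary extra.
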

\begin{proof}
We want to see which elements of 
\[\Inv(\Mcal_3 \smallsetminus \Hcal_3, \M^{\leq 2})=\M^{\leq 2}(\bfk) \oplus \beta_1 \cdot \M^{\bullet}(\bfk)_9 \oplus N\left[2\right]\]
extend to $\Mcal_3$. First note that obviously all invariants coming from the base field extend, and by \Cref{prop:extensionM3} the $2$-torsion component, which includes $N\left[2\right]$, extends to $\Mcal_3$. Now, by \cite{DilPicPositive}*{Theorem 3.1.5} we know that ${\rm Pic}(\Mcal_3)=\ZZ$, and it's well known that $\mathcal{O}^*(\Mcal_3)=\bfk^*$. Finally, recall that by \cite{DilPirBr}*{Lemma 2.18} we have ${\rm Inv}^1(\Mcal_3,\K_\ell)={\rm H}^1(\Mcal,\mu_\ell)$. Then the Kummer exact sequence for $\ell = 9$ reads
\[
\bfk^*/(\bfk^*)^9 \to {\rm Inv}^1(\Mcal_3,\K_9) \to {\rm Pic}(\Mcal_3)_9 = 0
\]
the component on the left is just $\K^1_9(\bfk)$ so we conclude that there are no elements in ${\rm Inv}^1(\Mcal_3,\K_9)$ outside those coming from the base, which implies that the ramification of $\beta_1$ on $\Hcal_3$, which belongs to ${\rm A}^0(\Hcal_3,{\rm H}^0_9)=\ZZ/9\ZZ$, must be coprime to $3$. But then the ramification of $\beta_1 \cdot x$ must be nonzero for any $x \in \M^{\bullet}(\bfk)_9$, proving our claim.
\end{proof}

\subsection{\`Etale algebras and Stiefel-Whitney classes}

To explicitly construct a generator for the factor $N[2]$ of the ring of cohomological invariants of $\Mcal_3$ we will make use of some results in the theory of classical cohomological invariants, to the authors' knowledge due to Serre. 

An \emph{\'etale algebra} $E$ over $F$ is a finite $F$-algebra which is isomorphic to $(F')^n$ for some $n$ after passing to some finite extension $F'/F$, i.e. $\Spec(E) \to \Spec(F)$ is a finite \'etale morphism. Passing to the group scheme $T_E(R) = {\rm Aut}_R(E \otimes R)$ we obtain a ${\rm S}_n$-torsor over $F$. The category of finite \'etale algebras of rank $n$ over $\bfk$ forms a Deligne-Mumford stack $\textnormal{\'Et}_n$ isomorphic to ${\rm BS}_n$, and any finite \'etale morphism $\mathcal{S} \to \Xcal$ of constant degree $n$ induces a morphism $\Xcal \to \textnormal{\'Et}_n$, meaning we can pull the cohomological invariants of $\textnormal{\'Et}_n$ back to $\Xcal$.

The cohomological invariants of $\textnormal{\'Et}_n$ are fully understood \cite{GMS}*{Ch. 7}, and can be described as follows:

Given $E/F \in \textnormal{\'Et}_n(F)$, we can define a scalar product on $E$, seen as a $F$-vector space, by $\langle x, y \rangle_{\rm tr} = {\rm tr}(\cdot xy)$, that is the couple $x,y$ is sent to the trace of the multiplication by $xy$ seen as an endomorphism of $E$. Diagonalizing this product yields an $n$-uple of coefficients $(d_1, \ldots, d_n) \in F^n$; obviously picking two different diagonal representations will yield different coefficients: it's immediate that an invariant of $E$ must be left unchanged when multiplying coefficients by a square or permuting them. We define the $i$-th Stiefel-Whitney class of $E$ as
\[
\alpha^{\rm SW}_i(E) = \sigma_i(d_1,\ldots, d_n) \in \K^i_{2}(F)
\]
where $\sigma_0=1 \in \K^0_2(F)$ and $\sigma_i$ is the $i$-th elementary symmetric polynomial in $d_1, \ldots, d_n$ seen as element in $\K_2^i(F)=\H^i(F,\ZZ/2\ZZ(i))$, e.g. 
\[\sigma_2(a,b,c)=\lbrace a , b \rbrace + \lbrace a , c \rbrace + \lbrace b , c \rbrace \in \K^2_2(\bfk(a,b,c)).\]
We have
\[
\Inv(\textnormal{\'Et}_n,\M)=\M^{\bullet}(\bfk) \oplus \alpha^{\rm SW}_1 \cdot \M^{\bullet}(\bfk) \oplus \ldots \oplus \alpha^{\rm SW}_{\lfloor n/2\rfloor} \cdot \M^{\bullet}(\bfk). 
\]
Now we will define new generators for the cohomological invariants of $\textnormal{\'Et}_n$, the Galois-Stiefel-Whitney classes, which have better properties. Set \cite{GMS}*{Thm.25.10}
\[
\alpha_i=
\begin{cases}
\alpha_i^{\rm SW} \quad & i \;\textnormal{odd}\\
\alpha_i^{\rm SW} + \lbrace 2 \rbrace \alpha^{\rm SW}_{i-1}\quad & i\; \textnormal{even}
\end{cases}
\]

Moreover, define the total Galois-Stiefel-Whitney class of $E$ as $\alpha_{\rm tot}(E)=\sum_i \alpha_i(E)$. Then the following properties hold:
\begin{enumerate}
    \item $\Inv(\textnormal{\'Et}_n,\M)=\M^{\bullet}(\bfk) \oplus \alpha_1 \cdot \M^{\bullet}(\bfk) \oplus \ldots \oplus \alpha_{\lfloor n/2\rfloor} \cdot \M^{\bullet}(\bfk). $
    \item $\alpha_i(E)=0$ if $i$ is greater than $\lfloor n/2 \rfloor$.
    \item $\alpha_{\rm tot}(F^n/F)=1$.
    \item $\alpha_{\rm tot}(F(\sqrt{a})/F)=1+\lbrace a \rbrace$.
    \item $\alpha_{\rm tot}(E \times E')=\alpha_{\rm tot}(E)\cdot \alpha_{\rm tot}(E').$
\end{enumerate}

The following examples will be helpful later on.

\begin{ex}\label{ex:alphatot}
Consider the field $F=\bfk(a,b)$, and the \'etale algebra over $F$
\[
E= F(\sqrt{a}) \times F(\sqrt{b}) \times F(\sqrt{ab}).
\]
Then we have 
\[\alpha_{\rm tot}(E) = \alpha_{\rm tot}(F(\sqrt{a})) \cdot \alpha_{\rm tot}(F(\sqrt{b})) \cdot \alpha_{\rm tot}(F(\sqrt{c}))=(1+ \lbrace a \rbrace)(1+ \lbrace b \rbrace)(1+ \lbrace ab \rbrace)\]
\[=1+\lbrace a \rbrace + \lbrace b \rbrace + \lbrace ab \rbrace + \lbrace a, b \rbrace + \lbrace a, ab \rbrace + \lbrace b, ab \rbrace + \lbrace a, b, ab \rbrace =\]
\[1 + 2\lbrace ab \rbrace + 3\lbrace a, b \rbrace + \lbrace - 1, a \rbrace + \lbrace -1, b \rbrace - 2\lbrace -1, a, b \rbrace = \]
\[=1 + \lbrace a, b \rbrace + \lbrace -1, a \rbrace +\lbrace -1, b \rbrace = 1 +\lbrace a , b \rbrace + \lbrace -1, ab \rbrace.\]
\end{ex}

\begin{ex}\label{ex:alphaK}
Consider again  $F=\bfk(a,b)$ and let $K=F(\sqrt{a},\sqrt{b})$, seen as an \'etale algebra over $F$. We know that $\alpha_i(K)=0$ for $i >2$, so we are interested in computing $\alpha_1(K)$ and $\alpha_2(K)$. Consider the trace form on $K$. By direct computation we obtain 
\[
{\rm tr}(\cdot \sqrt{a})={\rm tr}(\cdot \sqrt{b})={\rm tr}(\cdot \sqrt{ab})=0, {\rm tr}(\cdot 1)=4, {\rm tr}(\cdot a)=4a, {\rm tr}(\cdot b)=4b, {\rm tr}(\cdot ab)=4ab
\]
which shows that the matrix representing the trace form is
\[
\begin{pmatrix} 4 & 0 & 0 & 0 \\ 0 & 4a & 0 & 0\\ 0 & 0 & 4b & 0 \\ 0 & 0 & 0 & 4ab \end{pmatrix}
\]
which shows that 
\[\alpha_1(K)=\alpha^{\rm SW}_1(K)=\lbrace a \rbrace+\lbrace b \rbrace+\lbrace ab \rbrace=0\]
and in degree two we have
\[\alpha_2(K)=\alpha^{\rm SW}_2(K)+\lbrace 2\rbrace\alpha^{\rm SW}_1(K)=\lbrace a , b \rbrace+\lbrace a , ab \rbrace+\lbrace b , ab \rbrace= \lbrace a , b \rbrace + \lbrace -1, ab \rbrace.\]
\end{ex}

\subsection{An explicit generator: bitangents and lines on cubic surfaces}\label{subsection:cubic}

In this section we will produce an explicit \'etale algebra $S$ over $\Mcal_3$ and $\Xcal_{4}^{\fr}$ whose second Galois-Stiefel-Whitney class is non-trivial, or more specifically such that the map ${\rm M}^{\bullet}(\bfk)_2 \to \Inv(\Xcal_{4}^{\fr}, \M)$ given by $x \mapsto \alpha_2(S) \cdot x$ is injective for any cycle module $M$. 

Recall from \Cref{subsec:genusthree} that there is a finite étale morphism $\mathcal{S}_3^- \to \Mcal_3$ of degree $28$, which is given by the functor $(C,\theta)\mapsto C$ that forgets the odd theta-characteristic $\theta$. 

There are two maps 
\[\Mcal_3 \smallsetminus \Hcal_3 \to \Xcal_4, \quad \Xcal_4 \to \Mcal_3 \smallsetminus \Hcal_3\]
the first being the composition of $\Mcal_3 \smallsetminus \Hcal_3 \to \Xcal_4^{\fr}$ and $\Xcal_4^{\fr} \to \Xcal_4$ and the second given by the forgetful map $(C\to P \to S) \mapsto (C \to S)$. It's immediate to check that their composition is the identity on $\Mcal_3 \smallsetminus \Hcal_3$. This shows that the restriction of $\mathcal{S}_3^-$ to $\Mcal_3 \smallsetminus \Hcal_3$ induces a corresponding \'etale covering of $\Xcal_4^{\fr}$ and $\Xcal_4$, and everything we do can be applied to either of these. By \Cref{prop:bitang}, given a non-hyperelliptic curve $C/F$ and an embedding $C \to \PP^2_F$ the fiber $\mathcal{S}^-_3(C)$ parametrizes bitangent lines to $C$.  

Name $S$ the corresponding \'etale algebra on $\Mcal_3$ and $\Xcal^{\fr}_4$; We will show:

\begin{prop}\label{prop:explicit}
Let $N\left[2 \right]$ be the subgroup of $\Inv(\Xcal^{\fr}_4,\M)$ and $\Inv(\Mcal_3, \M)$ in Proposition \ref{prop: InvX4} and Corollary \ref{cor:invM3}. We have
  \[  N\left[2\right] = \M^{\bullet}(\bfk)_2\left[ 2\right] = \alpha_2(S)\cdot \M^{\bullet}(\bfk)_2\]
where $S$ is the \'etale algebra  coming from the finite \'etale morphism of degree $28$
\[
\mathcal{S}_3^- \to \Mcal_3.
\]
and the corresponding morphism on $\Xcal^{\fr}_4$.
\end{prop}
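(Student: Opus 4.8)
The plan is to reduce the statement to the \emph{non-triviality} of the single class $\alpha_2(S)$ and then to verify this by evaluating on one explicit, sufficiently generic plane quartic. First I would record that $\alpha_2(S)$, being pulled back from $\Inv^2({\rm BS}_{28},\K_2)$ along the map classifying the degree $28$ étale algebra $S$, is a class of degree two and of $2$-torsion. By \Cref{prop: InvX4} (respectively \Cref{cor:invM3}) the degree two part of $\Inv(\Xcal_4^{\fr},\M)$ (respectively of $\Inv(\Mcal_3,\M)$) splits off a summand $\beta_1\cdot\M^1(\bfk)_9$ which is $9$-torsion and hence meets the $2$-torsion class $\alpha_2(S)$ trivially, while over an algebraically closed field with coefficients in $\K_\ell$ or $\H_D$ the groups $\M^2(\bfk)$ and $\M^1(\bfk)$ both vanish. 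Thus the only possibly nonzero component of $\alpha_2(S)$ is its $N[2]$-component, and since $N[2]=\alpha\cdot\M^\bullet(\bfk)_2$ with $x\mapsto\alpha\cdot x$ injective, the equality $N[2]=\alpha_2(S)\cdot\M^\bullet(\bfk)_2$ will follow once we know that $x\mapsto \alpha_2(S)\cdot x$ is injective on $\M^\bullet(\bfk)_2$ with nonzero $N[2]$-component.

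To prove this, since restriction to the open substack $\Mcal_3\smallsetminus\Hcal_3\simeq\Xcal_4$ (equivalently to $\Xcal_4^{\fr}$, by \Cref{prop:nonHypiso} and \Cref{prop:M3-H3 iso X4}) is injective on cohomological invariants, it suffices to exhibit a single field $F$ and an $F$-point $C/F$, i.e.\ a smooth plane quartic, on which $\alpha_2(S)$ restricts to a class whose product with any nonzero $x\in\M^\bullet(\bfk)_2$ is nonzero. I would take $F=\bfk(a,b)$ with $a,b$ algebraically independent and aim to produce $C$ for which the étale algebra $S(C)$ of the $28$ bitangents (\Cref{prop:bitang}) decomposes, over $F$, into factors of the shapes appearing in \Cref{ex:alphatot} and \Cref{ex:alphaK}: trivial factors $F$ together with copies of $F(\sqrt a)$, $F(\sqrt b)$, $F(\sqrt{ab})$ and $K=F(\sqrt a,\sqrt b)$.

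The geometric heart of the argument---and the step I expect to be the main obstacle---is the explicit construction of such a quartic together with the determination of the Galois action on its $28$ bitangents. Here I would use the classical correspondence relating the $28$ bitangents of a plane quartic to the $27$ lines of a cubic surface: realizing $C$ as the branch quartic of the anticanonical double cover of a degree two del Pezzo surface $Y=\mathrm{Bl}_7\PP^2$, the $56$ exceptional $(-1)$-curves of $Y$ pair up under the Geiser involution into the $28$ bitangents, and $Y$ is obtained from a cubic surface $X=\mathrm{Bl}_6\PP^2$ by blowing up one further rational point. Choosing the six points of $X$ (hence its $27$ lines, whose monodromy group is $W(E_6)$) over $F$ so that the lines, and consequently the bitangents, are defined precisely over $F,F(\sqrt a),F(\sqrt b),F(\sqrt{ab}),K$, forces $S(C)$ to be a product of the corresponding étale algebras. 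Multiplicativity of the total Galois--Stiefel--Whitney class together with \Cref{ex:alphatot} and \Cref{ex:alphaK} then yields an expression of the form
\[\alpha_2(S)|_C=\{a,b\}+\{-1,ab\}\in\K_2^2(F),\]
the essential point being that the leading symbol $\{a,b\}$ survives; arranging the configuration so that this symbol is genuinely present is the delicate part.

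Finally I would conclude using the residue structure of cycle modules. For $0\neq x\in\M^\bullet(\bfk)_2$ the product $(\{a,b\}+\{-1,ab\})\cdot x$ lies in $\M(F)$, and applying the iterated ramification maps at $a=0$ and then at $b=0$ (via \cite{Rost} and the compatibility of boundary maps with the module structure, \Cref{rmk:ellproduct}) sends it to $\pm x$, the contributions of $\{-1,a\}$ and $\{-1,b\}$ being of strictly lower order. Since this iterated residue annihilates the image of the constant invariants $\M^\bullet(\bfk)$, it detects exactly the $N[2]$-component of $\alpha_2(S)\cdot x$ and shows that this component is nonzero for every $x\neq 0$. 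Therefore $x\mapsto\alpha_2(S)\cdot x$ is injective with image $N[2]$, which gives the desired equality $N[2]=\M^\bullet(\bfk)_2[2]=\alpha_2(S)\cdot\M^\bullet(\bfk)_2$.
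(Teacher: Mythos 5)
Your proposal follows essentially the same route as the paper: working over $F=\bfk(a,b)$, realizing the quartic as the branch curve of the degree-two cover attached to a cubic surface so that its $28$ bitangents are the $27$ lines plus one rational one, arranging the lines to be defined over $F$, $F(\sqrt a)$, $F(\sqrt b)$, $F(\sqrt{ab})$ and $K=F(\sqrt a,\sqrt b)$, computing $\alpha_2=\lbrace a,b\rbrace+\lbrace -1,ab\rbrace$ by multiplicativity of the total Galois--Stiefel--Whitney class, and detecting it with the iterated residues at $b=0$ and $a=0$. The one step you flag as delicate and leave open is carried out explicitly in the paper by blowing up the three conjugate pairs $(\pm\sqrt a:1:0)$, $(\pm\sqrt b:0:1)$, $(0:\pm\sqrt{ab}:1)$ in $\PP^2_F$ and reading off the Galois action on the six exceptional curves, fifteen lines and six conics.
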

This will complete the computation of low degree cohomological invariants of $\Mcal_3$ as well as $\Xcal^{\fr}_4$ and $\Mcal_3 \smallsetminus \Hcal_3$.

We recall now a very well known construction of a smooth cubic surface as a two-sheeted covering of $\PP^2$, branched along a smooth quartic (see for example \cite{Harris}).

\begin{prop}
Assume ${\rm char}(F) \neq 2$, and consider a smooth cubic surface $X \subset \PP^3_F$, an $F$-valued point $P \in X$ which does not lie on any line of $X$. Let $\widetilde{X}={\rm Bl}_P X$ be the blow-up of $X$ at $P$. Then there is a finite morphism of degree two 
\[
f:\widetilde{X} \to \PP^2_F
\]
whose ramification locus is a smooth quartic $B_X \subset \PP^2_F$. Moreover the bitangent lines to $B_X$ correspond to the lines on $X$ plus the exceptional divisor.
\end{prop}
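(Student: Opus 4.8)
The plan is to realize $f$ as the linear projection of $X$ away from $P$ and then to identify it with the anticanonical morphism of $\widetilde{X}$. First I would consider the projection $\pi_P\colon \PP^3_F \dashrightarrow \PP^2_F$ from $P$; its restriction to $X$ is a rational map undefined only at $P$, so blowing up $P$ resolves the indeterminacy and yields a morphism $f\colon \widetilde{X}\to \PP^2_F$. A general line through $P$ meets the cubic $X$ in three points, one of which is $P$, and the two residual points constitute the generic fibre of $f$; hence $f$ has degree two. Finiteness will come for free once $f$ is identified as a double cover below.

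Next I would show that $\widetilde{X}$ is a del Pezzo surface of degree two with $f$ its anticanonical morphism. Since $X$ is a smooth cubic surface, adjunction gives $-K_X = \mathcal{O}_X(1)$ and $K_X^2 = 3$, so $K_{\widetilde{X}}^2 = 2$ after blowing up one point. Writing $\pi$ for the blow-down, the anticanonical linear system $\lvert -K_{\widetilde{X}}\rvert = \lvert \pi^*\mathcal{O}_X(1) - E\rvert$ consists exactly of the strict transforms of the hyperplane sections of $X$ through $P$, which are the fibres of $\pi_P$; thus $f$ \emph{is} the anticanonical morphism and $f^*\mathcal{O}_{\PP^2}(1) = -K_{\widetilde{X}}$. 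Here the hypothesis that $P$ lies on no line of $X$ is exactly what makes $-K_{\widetilde{X}}$ ample: an irreducible curve $C$ with $(-K_{\widetilde{X}})\cdot C = 0$ must be a $(-2)$-curve, and on $\widetilde{X}={\rm Bl}_P X$ the only such curves are strict transforms of lines of $X$ passing through $P$, of which there are none. Therefore $\widetilde{X}$ is a del Pezzo surface of degree two, and by the standard structure theory of such surfaces (see \cite{Harris}) $f$ is a finite double cover of $\PP^2_F$. Writing the branch locus as $B_X\in\lvert 2L\rvert$, the double-cover formula $K_{\widetilde{X}} = f^*(K_{\PP^2} + L)$ combined with $-K_{\widetilde{X}} = f^*\mathcal{O}(1)$ forces $L = \mathcal{O}(2)$, so $B_X$ is a quartic; smoothness of $\widetilde{X}$ then forces $B_X$ to be smooth.

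For the statement about bitangents I would argue geometrically by tracking the images of the $(-1)$-curves (after base change to $\overline{F}$ for the count of lines, the correspondence itself being defined over $F$). For a line $\ell\subset X$, which avoids $P$, the plane $\langle P,\ell\rangle$ cuts $X$ in $\ell$ together with a residual conic $Q_\ell$, and $f$ maps $\ell$ isomorphically onto a line $\bar\ell\subset\PP^2_F$. A line through $P$ in this plane meets the fibre of $\pi_P$ in two points that collide precisely when it passes through one of the two points of $\ell\cap Q_\ell$; hence $\bar\ell$ is tangent to $B_X$ at the two images of these points, i.e. $\bar\ell$ is a bitangent. For the exceptional divisor, the tangent plane $T_PX$ cuts $X$ in a plane cubic with a double point at $P$, and the two branches at that double point furnish the two tangency points of the image line $\bar E$ with $B_X$, so $\bar E$ is again a bitangent. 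This produces $27+1 = 28$ bitangent lines, which by \Cref{prop:bitang} are all of them, yielding the asserted bijection between the bitangents of $B_X$ and the lines of $X$ together with the exceptional divisor.

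The main obstacle I anticipate is the second step: verifying rigorously that $f$ coincides with the anticanonical morphism and that ampleness of $-K_{\widetilde{X}}$ is genuinely equivalent to ``$P$ lies on no line'', while keeping the del Pezzo structure theory valid over a field $F$ that need not be algebraically closed. The $(-1)$-curve bookkeeping in the last step is classical but must be carried out with the correct contact multiplicities (each image line meets $B_X$ with total contact four, possibly degenerating to a flex-bitangent when the tangent section is cuspidal); once the del Pezzo identification is secured, the branch quartic, its smoothness, and the degree of $f$ all follow formally from the double-cover formulas.
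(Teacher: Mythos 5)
Your proposal is correct, and its skeleton matches the paper's proof: both construct $f$ by resolving the projection from $P$, both get degree two from the residual intersection of lines through $P$ with the cubic, and both detect the bitangents by analysing $f^{-1}(\bar\ell)=\ell\cup(\text{residual conic})$ for each line $\ell\subset X$ and the tangent-plane section for the exceptional divisor, then closing with the count $27+1=28$ against \Cref{prop:bitang}. The genuine divergence is in how the branch quartic is identified. The paper argues by direct pushforward: it writes $K_{\widetilde{X}}=f^*K_H+R_X$, computes $f_*K_{\widetilde{X}}=-6L+B_X$, identifies $K_{\widetilde{X}}=-C$ for $C$ the strict transform of a hyperplane section through $P$, and uses $f_*C=2L$ to get $B_X=4L$ (smoothness of $B_X$ is not separately addressed there). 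You instead note $f^*\mathcal{O}(1)=-K_{\widetilde{X}}$, show that the hypothesis that $P$ lies on no line is precisely ampleness of $-K_{\widetilde{X}}$, and invoke the structure theory of degree-two del Pezzo surfaces plus the double-cover formula. Your route is more conceptual: it makes the role of the hypothesis explicit (the paper uses it only implicitly, to guarantee finiteness of the projection and that the $27$ lines avoid $P$) and yields smoothness of $B_X$ for free from smoothness of $\widetilde{X}$ in characteristic $\neq 2$; the cost is the appeal to del Pezzo theory over a possibly non-closed field, which you rightly flag but which descends from $\overline{F}$ without difficulty since base-point-freeness of $\lvert -K_{\widetilde{X}}\rvert$ and the branch locus are Galois-stable. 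One small point glossed over in both treatments: to upgrade the count to the asserted bijection one should check that distinct lines of $X$ have distinct image lines; this is immediate from the hypothesis, since if $\ell_1\neq\ell_2$ had the same image $\bar\ell$ then the plane spanned by $P$ and $\bar\ell$ would cut $X$ in three lines, one of which would pass through $P$.
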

\begin{proof}
Let $H$ be a hyperplane in $\PP^2_F$, not containing $P$.
We can define a map $X\smallsetminus P \to H$ by sending $Q\in X$ to the intersection of $H$ and the line through $P$ and $Q$. By taking the blow-up of $X$ at $P$ we can extend this map to a projection
\[f:\widetilde{X}={\rm Bl}_P X \to H.\]
This is a finite morphism of degree two as every line in $\PP^3$ which passes through $P \in X$ intersects $X$ in two more points, counted with multiplicity.

Observe that the canonical divisor $K_{\widetilde{X}}$ is equal to $f^*K_{H}+R_X$, where $R_X$ is the ramification divisor. From this we deduce
\begin{align*}
    f_*(K_{\widetilde{X}})&= \deg(f)\cdot K_H + B_X = -6L + B_X,
\end{align*}
where $B_X$ is the branching divisor and $L$ is the class of a line in $H$. On the other hand, from the formula for the canonical divisor of a blow-up, we have that $K_{\widetilde{X}} = b^*K_X + E$, where $b\colon \widetilde{X}\to X$ is the blow-down morphism and $E$ is the exceptional divisor. 

As $X\subset\PP^3$ is a cubic hypersurface, by the adjunction formula $K_X = -S|_X$, where $S\subset \PP^3$ is a hyperplane. Moreover, from the formula for the fundamental class of a strict transform, we have $b^*(S|_X)=C + E$, where $C$ is the proper transform of $S|_X$. We deduce that 
$K_{\widetilde{X}} = -C$.

Observe that for a generic hyperplane $S\subset\PP^3$, the curve $S|_X$ is a plane curve of degree three in $S\simeq\PP^2$, hence a curve of genus one. On the other hand, the closure of the image of $S \subset \PP^3_F$ through the projection $\PP^3\dashrightarrow H$ is a line $L$, because the projection from a point maps generic hyperplanes to lines. These two facts together imply that the induced morphism $f|_C\colon C \to f(C)=L$ is a 2:1 covering, hence $f_*C= 2L$.

Putting everything together we deduce that $B_X=4L$, hence $f\colon\widetilde{X} \to H$ is a two-sheeted covering whose ramification locus is a smooth plane quartic $B_X$.

Let $D\subset X$ be a line, which by hypothesis does not contain the point $P$. Then its image through $X\smallsetminus P \to \PP^2_F$ is a curve; on the other hand, the projection of $D$ through $\PP^3_F\smallsetminus P \to \PP^2_F$ is either a line or a point, hence it must be a line. If $L\subset \PP^2_F$ is this line, then the restriction of $f|_D\colon D\to L$ has degree one, where with a little abuse of notation we are denoting both $D$ and its strict transform in $\widetilde{X}$ with the same letter.

As $f^{-1}(L)\to L$ is finite of degree two, and $D\subset f^{-1}(L)$ is not contained in the branching locus, we must have $f^{-1}(L)=D\cup C$ for some other curve $C\subset \widetilde{X}$ which is mapped 1:1 onto $L$.

Let $S\subset \PP^3_F$ be the hyperplane obtained as the closure of the preimage of $L$ through the projection $\PP^3_F\smallsetminus P \to \PP^2_F$; then $S\cap X= D \cup \overline{C}$, where $\overline{C}$ is the image of $C$ through the blow-down. 

As $D \cup \overline{C}\subset S$ is a plane curve of degree three, we must have that $\overline{C}$ has degree two, hence it meets $D$ in two points, counted with multiplicity (which must be different from $P$). By construction, the ramification points of $D\cup C \to L$ coincides with $D\cap C$, hence $L$ is a line meeting the branching divisor in two points. We deduce that $L$ is bitangent to $B_X$.

Finally, observe that the exceptional divisor $E$ is also mapped to a line $L$: this follows from the fact that the morphism ${\rm Bl}_P \PP^3_F \to \PP^2_F$ maps the exceptional divisor 1:1 onto $\PP^2_F$, and the map $E\to f(E)$ factors through the embedding of $E$ in the exceptional divisor of ${\rm Bl}_P \PP^3_F$.

Then as before we must have $f^{-1}(L)=E\cup C$ for some curve $C\subset \widetilde{X}$ that maps 1:1 onto $L$. Let $\overline{C}$ be its image in $X$: then $\overline{C}$ is equal to the intersection of $X$ with some hyperplane $S$, hence must have genus one. The only option left is that $\overline{C}$ is singular at $P$, from which we deduce that $C$ meets $E$ in two points (counted with multiplicity). 

As the branching locus of $f^{-1}(L) \to L$ coincides with $f(E\cap C)$, we deduce that $L$ is also a bitangent to the ramification locus.

As the number of bitangents of $B_X$ is $28$, we deduce that the $27$ lines on $X$ are sent $1:1$ to the $27$ bitangents of $B_X$, with the last bitangent being the image of the exceptional divisor.     
\end{proof}

Thus if we pick $B_X \in \Xcal_4(F)$ obtained this way, the fiber $\mathcal{S}^{-}_3(B_X)$ is equal to $\Spec(F) \cup L_X$, where $L_X$ is the Fano scheme of lines on $X$. 

Our goal is then to produce a cubic surface $X \subset \PP^3_{F}$ such that the scheme $L_X$ representing lines on $X$, or more precisely the associated \'etale algebra $E_X/F$ has a second Galois-Stiefel-Whitney class which satisfies $x \mapsto \alpha_2(E_X) \cdot x \neq 0$ for any cycle module $\M$ and $x \in \M^{\bullet}(\bfk)$. 

Let $F$ be the field $\bfk(a,b)$. A well known way to produce a cubic surface in $\PP^3$, see e.g. \cite{Hartshorne}*{Ch. V, Section 4}, is to blow up six points in general position on $\PP^2$; we want to make a more arithmetic version of this construction, taking inspiration from Example \ref{ex:alphatot}. Consider three length two points $P_1, P_2, P_3$ on $\PP^2_F$, defined respectively over $E_1=F(\sqrt{a})$, $E_2 = F(\sqrt{b})$ and $E_3 = F(\sqrt{ab})$. If we write $K=F(\sqrt{a}, \sqrt{b})$ and ${\rm Gal}(K) = \langle \sigma, \tau \rangle$ where 
\[\sigma(\sqrt{a})=-\sqrt{a}, \quad \sigma(\sqrt{b})=\sqrt{b}, \quad \tau(\sqrt{a})=\sqrt{a}, \quad \sigma(\sqrt{b})=-\sqrt{b}\]
this is equivalent to picking six points 
$Q_1, Q'_1, Q_2, Q'_2, Q_3, Q'_3$ on $\PP^2_{K}$ where the same-numbered points are exchanged by the Galois group and are fixed by respectively $\tau, \sigma$ and $\tau \cdot \sigma$. We moreover ask for these points to be in general position, i.e. no three points belong to the same line and the six points are not all on a conic. An example of six such points that works for every base field is
\[
Q_1, Q'_1 = ( \pm\sqrt{a}: 1 : 0), Q_2, Q'_2=(\pm\sqrt{b}: 0: 1), Q_3, Q'_3 = (0 : \pm\sqrt{ab} : 1)
\]
Blowing up these three points yields a cubic surface $X/F$, as can be seen by passing to $K$.

The $27$ lines on a cubic surface obtained this way can be easily described. Over $K$, they are the six exceptional divisors corresponding to the points, the fifteen lines passing through two points each, and the six conics passing through five of the six points. Then to describe the \'etale algebra $E_X/F$ it suffices to look at the action of the Galois group on $E_X \otimes_F K= K^{27}$. We immediately see that:

\begin{enumerate}
    \item The six lines given by the exceptional divisors are fixed by the stabilizer of the corresponding point and thus are defined over the same field.
    \item The three lines passing by conjugate points are fixed by the whole group and thus defined over $F$.
    \item The twelve lines passing by two non-conjugate points are only fixed by the identity and thus are defined over $K$.
    \item The six conics passing through five of six points are stabilized by the stabilizer of the sixth point and thus are defined over the same field.
\end{enumerate}

This shows that 
\[
E_X = F^3 \times E_1^2 \times E_2^2 \times E_3^2 \times K^3
\]
We know that $\alpha_{\rm tot}(\Spec(F))=1$ and $\alpha_{\rm tot}(\Spec(E_i))$ is equal to $1+\lbrace a\rbrace,1+\lbrace b\rbrace$ and $1+\lbrace ab\rbrace$ respectively, which shows
\[
\alpha_{\rm tot}(E_X)=(1+\lbrace a \rbrace)^2(1+\lbrace b \rbrace)^2(1+\lbrace ab \rbrace)^2\alpha_{\rm tot}(\Spec(K))^3
\]
expanding the first three components we get
\[
(1+\lbrace a \rbrace)^2(1+\lbrace b \rbrace)^2(1+\lbrace ab \rbrace)^2=(1+\lbrace -1 , a \rbrace)(1+\lbrace -1 , b \rbrace)(1+\lbrace -1 , ab \rbrace)=
\]
\[
1 + \lbrace -1 , a \rbrace + \lbrace -1 , b \rbrace + \lbrace -1 , a \rbrace + \lbrace -1 , b \rbrace + r = 1 + r
\]
Where every term in $r$ has degree higher than $2$. This shows that the degree two term of $\alpha_{\rm tot}(E_X)$ is the degree two term of $\alpha_{\rm tot}(K)^3$. By example \ref{ex:alphaK} we know that 
\[\alpha_1(K)=0, \quad \alpha_2(K)=\lbrace a, b \rbrace + \lbrace -1, ab \rbrace \]
so
\[
\alpha_{\rm tot}(E_X)=(1+r)\alpha_{\rm tot}(K)^3=(1+r)(1+\lbrace a , b \rbrace + r')^3=1+\lbrace a , b \rbrace + \lbrace -1, ab \rbrace + r''
\]
Where again every term in $r''$ has degree higher than $2$, showing that
\[
\alpha_2(E_X)=\lbrace a , b \rbrace + \lbrace -1, ab \rbrace.
\]
Observe that for any $x \in \M^{\bullet}_2(\bfk)$ by the compatibility of the $\K_{2}$-module structure and differential we have
\[
\partial_{a=0} (\partial_{b=0}(\lbrace a , b \rbrace + \lbrace -1, ab \rbrace)\cdot x) = \partial_{a=0}(\lbrace a\rbrace \cdot x) = x 
\]
showing that the map $\M^{\bullet}(\bfk) \to \Inv(\Xcal^{\fr}_4, \M)$ given by $x \mapsto \alpha_2(S)\cdot x$ is injective. This concludes the proof of Proposition \ref{prop:explicit}.

\begin{rmk}
Pick the base field $\bfk$ to be a completely real field, so that the elements $\lbrace -1, \ldots, -1 \rbrace$ are all nonzero (because $\H^{\bullet}(\mathbb{R},\ZZ/2\ZZ)\simeq\ZZ/2\ZZ\left[\varepsilon\right]$ where $\varepsilon=\lbrace -1 \rbrace$), and define $F'=\bfk(a,b,c)$.

If we repeat the construction above using $K'=\bfk(\sqrt{a},\sqrt{b},\sqrt{c})$ and picking points points $Q_1, \ldots, Q'_3$ defined over $F'(\sqrt{a}),\ldots, F'(\sqrt{c})$ we obtain
\[\alpha_{\rm tot}(E_{X'})=1 + \lbrace -1, a b c\rbrace + \lbrace a,b \rbrace + \lbrace a, c \rbrace + \lbrace b, c \rbrace + \lbrace -1, -1, -1, abc \rbrace + \]
\[\lbrace -1\rbrace^3 \lbrace a, b, c \rbrace + \lbrace - 1 \rbrace^5 \lbrace abc \rbrace + r\]
where every term in $r$ has degree at least $8$. Note that the classes 
\[1, \alpha_2(E_{X'}), \alpha_4(E_{X'}), \alpha_6(E_{X'})\]
are all $\K_2(\bfk)$-linearly independent, which shows that there must be at least two $\K_2(\bfk)$-linearly independent cohomological invariants of $\Mcal_3$ of degree three or more that do not belong to the submodule generated by invariants of degree two or less.
\end{rmk}

\subsection{Main results}

We are ready to assemble our main results. Combining Corollary \ref{cor:invM3} and Proposition \ref{prop:explicit} we immediately obtain a full computation of the low degree cohomological invariants of $\Mcal_3$. Moreover we can combine Theorem \ref{thm:inv Xdfr} and Proposition \ref{prop:nonHypiso} with Corollary \ref{prop:explicit} to additionally compute the low degree cohomological invariants of $\Mcal_3 \smallsetminus \Hcal_3$ and $\Xcal_4^{\fr}$ over a non algebraically closed field.

\begin{thm}\label{thm:inv M3}
Assume our base field $\bfk$ has characteristic different from $2$, and let $\M$ be an $\ell$-torsion cycle module. We have 
\begin{align*}
    {\rm Inv}^{\leq 2}(\Mcal_3,\M)&=\M^{\leq 2}(\bfk) \oplus \alpha_2 \cdot \M^{0}(\bfk)_2\\
    {\rm Inv}^{\leq 2}(\Mcal_3\smallsetminus \Hcal_3,\M)&=\M^{\leq 2}(\bfk) \oplus \beta_1 \cdot\M^{\leq 1}(\bfk)_9 \oplus \alpha_2 \cdot \M^{0}(\bfk)_2\\
    {\rm Inv}^{\leq 2}(\Xcal_4^{\fr},\M)&=\M^{\leq 2}(\bfk) \oplus \beta_1 \cdot\M^{\leq 1}(\bfk)_9 \oplus \alpha_2 \cdot \M^{0}(\bfk)_2
\end{align*}
where $\alpha_2$ the second Galois-Stiefel-Whitney class of the étale covering $\mathcal{S}^-_3 \to \Mcal_3$ and $\beta_1 \in {\rm Inv}^1(\Mcal_3\smallsetminus \Hcal_3,\K_9)$.
\end{thm}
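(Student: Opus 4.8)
The plan is to deduce the three identities from results already established, handling $\Xcal_4^{\fr}$ first and then propagating the answer to $\Mcal_3\smallsetminus\Hcal_3$ and to $\Mcal_3$; the only genuinely new input needed is the identification of the degree-two $2$-torsion summand, which is supplied by \Cref{prop:explicit}.

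For $\Xcal_4^{\fr}$ I would specialize \Cref{prop: InvX4} to $d=4$. Since $3\nmid 4$ we have $i_4=0$, so the torsion exponent $3^{i_4}(d-1)^2$ equals $(4-1)^2=9$; as $4$ is even, the even-degree case of the proposition applies and gives, via the identification ${\rm Inv}^{\leq 2}(\Xcal_4^{\fr},\M)={\rm A}^0_{\GLt}(\PP(W_4)\smallsetminus Z,\M^{\leq 2})$, the decomposition
\[
{\rm Inv}^{\leq 2}(\Xcal_4^{\fr},\M)=\M^{\leq 2}(\bfk)\oplus \beta_1\cdot\M^{\leq 1}(\bfk)_9 \oplus N[2],\qquad N\subseteq\M^0(\bfk)_2.
\]
Identifying $N[2]=\alpha_2\cdot\M^0(\bfk)_2$ through \Cref{prop:explicit} yields the third formula. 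To obtain the second formula I would apply \Cref{prop:nonHypiso}: the $\Gm$-torsor $\Mcal_3\smallsetminus\Hcal_3\to\Xcal_4^{\fr}$ induces an isomorphism on cohomological invariants, so the invariants of $\Mcal_3\smallsetminus\Hcal_3$ are the pullbacks of those just computed, with $\beta_1$ and $\alpha_2$ understood as their images under $\pi^*$.

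For $\Mcal_3$ I would instead start from \Cref{cor:invM3}, which already records that ${\rm Inv}^{\leq 2}(\Mcal_3,\M)=\M^{\leq 2}(\bfk)\oplus N[2]$; the $\beta_1$-summand is absent precisely because its ramification along $\Hcal_3$ is a unit modulo $9$, hence not $2$-torsion, so it does not extend across the hyperelliptic divisor. Applying \Cref{prop:explicit} once more to rewrite $N[2]=\alpha_2\cdot\M^0(\bfk)_2$ gives the first formula, with $\alpha_2$ restricting on $\Mcal_3\smallsetminus\Hcal_3$ to the class appearing there. Thus all three identities follow by assembling \Cref{prop: InvX4}, \Cref{prop:nonHypiso}, \Cref{cor:invM3} and \Cref{prop:explicit}.

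The assembly itself is essentially bookkeeping --- the numerology $i_4=0\Rightarrow 9$ and the matching of generators under pullback --- so the weight of the argument sits entirely in \Cref{prop:explicit}. The delicate point it resolves is that $N$ is the \emph{full} group $\M^0(\bfk)_2$ over an arbitrary field of characteristic $\neq 2$, and not merely over an algebraically closed field as in \Cref{thm:inv Xdfr}: this is exactly what the explicit étale algebra $S$ arising from the $28$ bitangents guarantees, its second Galois--Stiefel--Whitney class satisfying $x\mapsto\alpha_2(S)\cdot x\neq 0$ for every $x\in\M^\bullet(\bfk)_2$. Granting that proposition, the present statement is a formal consequence.
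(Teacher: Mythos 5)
Your proposal is correct and follows essentially the same route as the paper: the authors likewise assemble the theorem from \Cref{prop: InvX4} (specialized to $d=4$, giving the $9$-torsion exponent), \Cref{prop:nonHypiso} for transferring the answer to $\Mcal_3\smallsetminus\Hcal_3$, \Cref{cor:invM3} for the restriction to $\Mcal_3$, and \Cref{prop:explicit} to identify $N[2]$ with $\alpha_2\cdot\M^0(\bfk)_2$ over an arbitrary field. Your observation that the entire weight rests on \Cref{prop:explicit}, with the rest being bookkeeping, matches the paper's own presentation.
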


Restricting the result above to $\M = \H_{\mu_{\ell}^{\vee}}$, and using the fact \cites{EHKV,KV} that for a smooth, generically tame and separated Deligne-Mumford stack whose coarse moduli space is quasi projective the Brauer group and cohomological Brauer group coincide we obtain the computation of the Brauer groups of $\Mcal_3$ and $\Mcal_3 \smallsetminus \Hcal_3$.

\begin{cor}\label{cor:brauer M3}
Assume our base field $\bfk$ has characteristic different from $2$, and let $\ell$ be a positive integer divisible by $2$ and coprime to ${\rm char}(\bfk)$. Then
\[
{\rm Br}(\Mcal_3)_\ell={\rm Br}(\bfk)_\ell \oplus \ZZ/2\ZZ
\]
where the $\ZZ/2\ZZ$ is the pullback of a generator of ${\rm Br}(\Brm{\rm S}_{28})/{\rm Br}(\bfk)$ through the map induced by  $\mathcal{S}^-_3 \to \Mcal_3$. Moreover we have
\[
{\rm Br}(\Mcal_3 \smallsetminus \Hcal_3)_{\ell}={\rm Br}(\Xcal_4^{\fr})_{\ell}={\rm Br}(\Xcal_4)_{\ell}={\rm Br}(\bfk)_\ell \oplus {\rm H}^1(\bfk,\ZZ/9\ZZ)_{\ell} \oplus \ZZ/2\ZZ
\]
where when ${\rm char}(\bfk)$ is coprime to $3$ the ${\rm H}^1(\bfk,\ZZ/9\ZZ)$ component comes from the cup products of $\beta_1 \in {\rm Inv}^1(\Mcal_3\smallsetminus \Hcal_3,\K_9)={\rm H}^1(\Mcal_3 \smallsetminus \Hcal_3,\mu_9)$ and elements of ${\rm H}^1(\bfk,\ZZ/9\ZZ)$.
\end{cor}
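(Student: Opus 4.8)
The plan is to feed the cohomological-invariant computation of \Cref{thm:inv M3} into the identification ${\rm Inv}^2(\Xcal,\H_{\mu_\ell^{\vee}})={\rm Br}'(\Xcal)_\ell$ from property (7) of \Cref{sec:coh}, which is available because $\Mcal_3$, $\Mcal_3\smallsetminus\Hcal_3$ and $\Xcal_4^{\fr}$ are all presented as smooth quotient stacks. Concretely I would set $\M=\H_{\mu_\ell^{\vee}}$ and read off the degree-two part of each line of \Cref{thm:inv M3}, using $\H^i_{\mu_\ell^{\vee}}(\bfk)=\H^i(\bfk,\ZZ/\ell\ZZ(i-1))$ to evaluate the graded pieces that appear:
\begin{align*}
\M^2(\bfk) &= \H^2(\bfk,\ZZ/\ell\ZZ(1))={\rm Br}(\bfk)_\ell, \\
\M^1(\bfk)_9 &= \H^1(\bfk,\ZZ/\ell\ZZ)_9=\H^1(\bfk,\ZZ/9\ZZ)_\ell, \\
\M^0(\bfk)_2 &= \H^0(\bfk,\ZZ/\ell\ZZ(-1))_2=\ZZ/2\ZZ.
\end{align*}
Here the first line is Kummer theory; in the second line both groups are identified with $\H^1(\bfk,\ZZ/\gcd(\ell,9)\ZZ)$, i.e. with $\Hom(G_\bfk,\ZZ/\gcd(\ell,9)\ZZ)$ (this is where one needs ${\rm char}(\bfk)$ coprime to $3$ for the $\beta_1$-description to make sense, the group being zero otherwise); and the last uses that, $\ell$ being even with ${\rm char}(\bfk)\neq 2$, the $2$-torsion of $\ZZ/\ell\ZZ(-1)$ is $\mu_2^{\otimes-1}\cong\mu_2\subseteq\bfk$ with trivial Galois action.

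For $\Mcal_3$ the degree-two part of \Cref{thm:inv M3} is $\M^2(\bfk)\oplus\alpha_2\cdot\M^0(\bfk)_2$, so the evaluations above give ${\rm Inv}^2(\Mcal_3,\H_{\mu_\ell^{\vee}})={\rm Br}(\bfk)_\ell\oplus\ZZ/2\ZZ$, hence ${\rm Br}'(\Mcal_3)_\ell={\rm Br}(\bfk)_\ell\oplus\ZZ/2\ZZ$. The splitting is automatic, as the two factors are already direct summands in \Cref{thm:inv M3} with ${\rm Br}(\bfk)_\ell$ the image of the pullback from $\Spec(\bfk)$. To replace ${\rm Br}'$ by ${\rm Br}$ I invoke \cites{EHKV,KV}, since $\Mcal_3$ is smooth, generically tame, separated and has quasi-projective coarse space. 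Finally the $\ZZ/2\ZZ$ generator is $\alpha_2=\alpha_2(S)$, which by \Cref{prop:explicit} is the second Galois--Stiefel--Whitney class of $\mathcal{S}^-_3\to\Mcal_3$ and hence the pullback of the universal class $\alpha_2\in{\rm Inv}^2(\Brm{\rm S}_{28},\K_2)$ along the classifying map $\Mcal_3\to\Brm{\rm S}_{28}$; under the same identification this universal class generates the summand $\alpha_2\cdot\M^0(\bfk)_2\cong\ZZ/2\ZZ$ of ${\rm Br}'(\Brm{\rm S}_{28})_\ell$.

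For the open locus the degree-two part of the second and third lines of \Cref{thm:inv M3} is $\M^2(\bfk)\oplus\beta_1\cdot\M^1(\bfk)_9\oplus\alpha_2\cdot\M^0(\bfk)_2$, which by the evaluations above yields ${\rm Br}'(\Mcal_3\smallsetminus\Hcal_3)_\ell={\rm Br}(\bfk)_\ell\oplus\H^1(\bfk,\ZZ/9\ZZ)_\ell\oplus\ZZ/2\ZZ$, again split as a direct sum with the middle factor realized as the cup products $\beta_1\cdot\H^1(\bfk,\ZZ/9\ZZ)_\ell$. The equalities with ${\rm Br}(\Xcal_4^{\fr})_\ell$ and ${\rm Br}(\Xcal_4)_\ell$ follow from the isomorphisms on cohomological invariants coming from \Cref{prop:M3-H3 iso X4} (which gives $\Mcal_3\smallsetminus\Hcal_3\simeq\Xcal_4$) and \Cref{prop:nonHypiso} (the $\Gm$-torsor $\Mcal_3\smallsetminus\Hcal_3\to\Xcal_4^{\fr}$ induces an isomorphism), both of which transport the computation verbatim. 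I expect the only delicate points to be bookkeeping ones: pinning down the Tate twists so that $\M^0(\bfk)_2=\ZZ/2\ZZ$ and $\M^1(\bfk)_9=\H^1(\bfk,\ZZ/9\ZZ)_\ell$, and keeping straight that property (7) outputs the \emph{cohomological} Brauer group, so the passage to the Azumaya group ${\rm Br}$ must be justified separately (via \cites{EHKV,KV} for the Deligne--Mumford representatives $\Mcal_3\smallsetminus\Hcal_3\simeq\Xcal_4$, and through the $\Gm$-gerbe structure, as in \Cref{cor:brauer Xdfr}, for $\Xcal_4^{\fr}$).
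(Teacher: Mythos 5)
Your proposal is correct and is essentially the paper's own argument: the paper obtains the corollary in exactly this way, by specializing \Cref{thm:inv M3} to $\M=\H_{\mu_\ell^{\vee}}$, invoking the identification ${\rm Inv}^2(-,\H_{\mu_\ell^{\vee}})={\rm Br}'(-)_\ell$ for smooth quotient stacks, and citing \cites{EHKV,KV} to pass from the cohomological Brauer group to the Brauer group. Your explicit bookkeeping of the Tate twists (showing $\M^0(\bfk)_2=\ZZ/2\ZZ$ and $\M^1(\bfk)_9=\H^1(\bfk,\ZZ/9\ZZ)_\ell$) and of the ${\rm Br}'$ versus ${\rm Br}$ issue for the non--Deligne--Mumford stack $\Xcal_4^{\fr}$ only makes explicit what the paper leaves implicit.
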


We would like to highlight three remaining questions:
\begin{enumerate}
    \item What is the image of ${\rm Br}(\Mcal_3)$ inside ${\rm Br}(\Hcal_3)$?
    \item Is it possible to compute the full cohomological invariants of $\Mcal_3$, at least over an algebraically closed field?
    \item Do the \'etale algebras obtained by odd and even theta characteristics induce any nontrivial invariants on $\Mcal_g$ for $g \geq 4$?
\end{enumerate}

\section{The Brauer group of $\mathcal{A}_3$}

Let $\mathcal{A}_3$ be the moduli stack of three-dimensional principally polarized abelian varieties (p.p.a.v.) over a field $\bfk$ of characteristic different from $2$. We will show that understanding the Brauer groups of $\Mcal_3$ and $\Mcal_3 \smallsetminus \Hcal_3$ is sufficient to compute the Brauer group of $\mathcal{A}_3$ as well.

\begin{rmk}
We already know the Brauer groups of $\mathcal{A}_1$ and $\mathcal{A}_2$. 
\begin{enumerate}
    \item The map $\Mcal_{1,1} \to \mathcal{A}_1$ is an isomorphism, so by \cite{DilPirPositive}*{Thm. 9.1} we know that its Brauer group is
\[
\begin{cases}
{\rm Br}(\mathbb{A}^1_\bfk) \oplus {\rm H}^{1}(\bfk,\ZZ/12\ZZ) & \mbox{if } {\rm char}(\bfk) \neq 2,\\
{\rm Br}(\mathbb{A}^1_\bfk) \oplus {\rm H}^{1}(\bfk,\ZZ/3\ZZ) \oplus {\rm J} & \mbox{if } {\rm char}(\bfk)=2, \, \mathbb{F}_4 \not\subset \bfk, \\
{\rm Br}(\mathbb{A}^1_\bfk) \oplus {\rm H}^{1}(\bfk,\ZZ/12\ZZ) \oplus \ZZ/2\ZZ & \mbox{if } {\rm char}(\bfk)=2, \, \mathbb{F}_4 \subseteq \bfk,
\end{cases}
\]
where ${\rm H}^1(\bfk,\ZZ/4) \subset {\rm J} \subseteq {\rm H}^1(\bfk,\ZZ/8)$ sits in an exact sequence
\[
0 \to  {\rm H}^1(\bfk,\ZZ/4) \to {\rm J} \to \ZZ/2 \to 0. 
\]

\item The Torelli morphism induces an isomorphism $\Mcal_2^{\rm ct}=\overline{\Mcal}_2 \smallsetminus \Delta_0 \simeq \mathcal{A}_2$, where $\Delta_0$ is the divisor parametrizing curves with a non-separating node. This can be seen as the map is finite, representable and birational \cites{LandProper,Seki,MGNoether}. Then by \cite{DilPirRS}*{Prop. 5.2, 5.3} we get that for a base field $\bfk$ of characteristic different from $2$ and an even $\ell$ not divisible by the characteristic
\[
{\rm Br}(\mathcal{A}_2)_{\ell}={\rm Br}(\bfk)_\ell \oplus {\rm H}^1(\bfk,\ZZ/2\ZZ) \oplus \ZZ/2\ZZ.
\]
\end{enumerate}
\end{rmk}

As all the stacks in the section will be smooth, separated and generically tame Deligne Mumford stacks with quasiprojective coarse moduli space, by \cites{EHKV,KV} the $\ell$-torsion parts of their Brauer groups and cohomological Brauer groups coincide. For simplicity we will always refer to the Brauer group even when in practice we are working with the cohomological Brauer group. Recall that the Torelli morphism
\[
\Mcal_3 \xrightarrow{\tau} \mathcal{A}_3, \quad (C \to S) \mapsto ({\rm Jac}(C) \to S, \Theta(C))
\]
\begin{enumerate}
\item induces an open immersion of moduli spaces, whose image is the locus of indecomposable p.p.a.v. (i.e. not isomorphic to a product of lower dimension p.p.a.v.) of dimension three \cites{Col, LandProper};
\item is representable, and the automorphism group of ${\rm Jac}(C)$ is equal to ${\rm Aut}(C)$ if $C$ is hyperelliptic and ${\rm Aut}(C) \times \ZZ/2\ZZ$ if $C$ is not. The extra $\ZZ/2\ZZ$ comes from the canonical involution on a p.p.a.v. that sends an element to its inverse \cite{Seki};
\item is étale of degree two over $\Mcal_3 \smallsetminus \Hcal_3$, and a locally closed immersion when restricted to $\Hcal_3$ \cites{MGNoether,LandHyp}.
\end{enumerate}

We denote by $\mathcal{A}^0_3$ the substack of indecomposable p.p.a.v. of dimension three. The following lemmas are the key to our computation.

\begin{lm}
The $\ell$-torsion part of the Brauer group of $\mathcal{A}_3$ (resp. $\mathcal{A}_3 \smallsetminus \overline{\tau(\Hcal_3)}$) is isomorphic to that of $\mathcal{A}_3^0$ (resp. $\mathcal{A}^0_3 \smallsetminus \tau(\Hcal_3)$).    
\end{lm}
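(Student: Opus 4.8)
The plan is to derive both isomorphisms from a single codimension estimate, exploiting the fact that the low–degree cohomological invariants of a smooth stack — and hence, through the identification ${\rm Inv}^2(\Xcal,\H_{\mu_\ell^{\vee}})={\rm Br}'(\Xcal)_\ell$ of \Cref{sec:coh}, the $\ell$-torsion of its (cohomological) Brauer group — are unchanged when one removes a closed substack of codimension at least two. Both $\mathcal{A}_3$ and $\mathcal{A}_3^0$ are smooth, and $\mathcal{A}_3^0\hookrightarrow\mathcal{A}_3$ is an open immersion; if its complement has codimension $\geq 2$, then pullback is an isomorphism on cohomological invariants, and specialising to $\M=\H_{\mu_\ell^{\vee}}$ in degree two identifies the $\ell$-torsion Brauer groups (recall that for these stacks ${\rm Br}$ and ${\rm Br}'$ agree on $\ell$-torsion). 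So everything reduces to bounding the codimension of the decomposable locus $\mathcal{A}_3\smallsetminus\mathcal{A}_3^0$.

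First I would run the dimension count. The stack $\mathcal{A}_3$ has dimension $6$, while its complement $\mathcal{A}_3\smallsetminus\mathcal{A}_3^0$, the locus of decomposable (product) principally polarised abelian threefolds, is the union of the images of the two product morphisms $\mathcal{A}_1\times\mathcal{A}_2\to\mathcal{A}_3$ and $\mathcal{A}_1\times\mathcal{A}_1\times\mathcal{A}_1\to\mathcal{A}_3$. These images have dimension at most $1+3=4$ and $3$ respectively, so the decomposable locus has codimension $\geq 2$ in $\mathcal{A}_3$. This immediately yields the first isomorphism ${\rm Br}(\mathcal{A}_3)_\ell\simeq{\rm Br}(\mathcal{A}_3^0)_\ell$.

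For the relative statement I would first identify the complement of $\mathcal{A}_3^0\smallsetminus\tau(\Hcal_3)$ inside $\mathcal{A}_3\smallsetminus\overline{\tau(\Hcal_3)}$. Since the Torelli morphism realises $\mathcal{A}_3^0$ as the indecomposable locus $\tau(\Mcal_3)$, and $\tau(\Hcal_3)$ — being the image of the closed substack $\Hcal_3\subset\Mcal_3$ under an open immersion of coarse spaces — is closed in $\mathcal{A}_3^0$, we get $\overline{\tau(\Hcal_3)}\cap\mathcal{A}_3^0=\tau(\Hcal_3)$; the remaining points of the closure lie in the decomposable locus. Hence $\mathcal{A}_3^0\smallsetminus\tau(\Hcal_3)=(\mathcal{A}_3\smallsetminus\overline{\tau(\Hcal_3)})\cap\mathcal{A}_3^0$ is an open substack of the smooth stack $\mathcal{A}_3\smallsetminus\overline{\tau(\Hcal_3)}$, whose complement $(\mathcal{A}_3\smallsetminus\mathcal{A}_3^0)\smallsetminus\overline{\tau(\Hcal_3)}$ is contained in the decomposable locus and therefore still of codimension $\geq 2$. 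Applying the same invariance property gives the second isomorphism.

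The only genuinely content-bearing step is the codimension estimate, so that is where I would be most careful: one must check that the product loci have the stated dimensions \emph{as substacks} (not merely as subsets of the coarse space) and that indecomposability is an open condition, guaranteeing that $\mathcal{A}_3^0$ is open with closed complement. The identification $\overline{\tau(\Hcal_3)}\cap\mathcal{A}_3^0=\tau(\Hcal_3)$ is the secondary point to pin down, but it is formal once one invokes the Torelli open immersion recalled above together with the fact that $\tau$ carries the closed hyperelliptic divisor to a closed substack of $\mathcal{A}_3^0$.
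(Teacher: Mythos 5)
Your argument is essentially the paper's own proof: bound the decomposable locus $\mathcal{A}_3\smallsetminus\mathcal{A}_3^0$ by the images of $\mathcal{A}_2\times\mathcal{A}_1$ and $(\mathcal{A}_1)^3$ to get codimension $\geq 2$, then invoke invariance of cohomological invariants (hence of the $\ell$-torsion of the cohomological Brauer group) under removal of closed substacks of codimension $\geq 2$. Your extra care with the second case — checking $\overline{\tau(\Hcal_3)}\cap\mathcal{A}_3^0=\tau(\Hcal_3)$ so that the removed locus is again contained in the decomposable one — is a point the paper leaves implicit, and is correctly handled.
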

\begin{proof}
The boundary to $\mathcal{A}_3^0$ in $\mathcal{A}_3$ is the substack of p.p.a.v. which can be written as a product. There is an obvious surjective map $\mathcal{A}_2\times \mathcal{A}_1 \cup (\mathcal{A}_1)^3 \to \mathcal{A}_3 \smallsetminus \mathcal{A}^0_3$, showing that it has dimension at most four, and thus it is a closed subset of codimension at least two, so removing it does not modify cohomological invariants and in particular it does not change the $\ell$-torsion of the cohomological Brauer group.    
\end{proof}

\begin{lm}
The map from $\Mcal_3\smallsetminus \Hcal_3$ to the rigidification of $\mathcal{A}^0_3 \smallsetminus \tau(\Hcal_3)$ with respect to the canonical involution is an isomorphism.
\end{lm}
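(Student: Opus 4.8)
The plan is to realize the map in question as the composite of the Torelli morphism with the rigidification gerbe, and then show this composite is a surjective, flat, locally finitely presented monomorphism, hence an isomorphism. Write $\mathcal{V} := \mathcal{A}^0_3 \smallsetminus \tau(\Hcal_3)$. The inversion $\iota = [-1]$ is defined on every abelian scheme, preserves the principal polarization (one has $[-1]^\vee \circ \lambda \circ [-1] = \lambda$), and lies in the center of every automorphism group since any group homomorphism commutes with $[-1]$; thus it determines a central copy of $\ZZ/2\ZZ$ inside the inertia of $\mathcal{V}$, the rigidification $\pi\colon \mathcal{V} \to \mathcal{R}$ along $\langle\iota\rangle$ is defined, and $\pi$ is a $\ZZ/2\ZZ$-gerbe. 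By property (1) of the Torelli morphism the image of $\Mcal_3\smallsetminus\Hcal_3$ is exactly the indecomposable non-hyperelliptic locus $\mathcal{V}$, so $\tau$ factors as $\Mcal_3\smallsetminus\Hcal_3 \xrightarrow{\tau} \mathcal{V}$ and the morphism under consideration is $\overline{\tau} := \pi\circ\tau$.

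First I would show that $\overline{\tau}$ is fully faithful, hence a monomorphism. For families $C,C'$ of smooth non-hyperelliptic genus three curves over a base $T$, the \emph{strong Torelli theorem} (property (2) read in families) identifies the isomorphism functor $\underline{\mathrm{Isom}}_T\big((\mathrm{Jac}(C),\Theta),(\mathrm{Jac}(C'),\Theta')\big)$ with $\{\pm 1\}\times\underline{\mathrm{Isom}}_T(C,C')$, the sign recording post-composition with the canonical involution. Since forming $\mathcal{R}$ replaces each Hom-functor of $\mathcal{V}$ by its quotient under the central $\langle\iota\rangle$-action, we get $\mathrm{Hom}_{\mathcal{R}}(\overline{\tau}C,\overline{\tau}C') = \underline{\mathrm{Isom}}_T(\mathrm{Jac}(C),\mathrm{Jac}(C'))/\langle -1\rangle \cong \underline{\mathrm{Isom}}_T(C,C') = \mathrm{Hom}_{\Mcal_3\smallsetminus\Hcal_3}(C,C')$, and this bijection is precisely the map induced by $\overline{\tau}$. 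Thus $\overline{\tau}$ is fully faithful, so it is representable by algebraic spaces.

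Next I would verify flatness and finiteness of presentation: $\overline{\tau}$ is the composite of $\tau$, which by property (3) is étale of degree two over $\Mcal_3\smallsetminus\Hcal_3$, and of the gerbe $\pi$, which is smooth; both are flat and locally of finite presentation, and these properties are stable under composition. A flat, locally finitely presented monomorphism of algebraic stacks is an open immersion. Finally $\overline{\tau}$ is surjective, being the composite of the surjection $\tau$ onto $\mathcal{V}$ (property (1)) and the surjective gerbe $\pi$. A surjective open immersion is an isomorphism, which is the claim; geometrically, the étale degree-two ambiguity of $\tau$ is exactly the $\pm 1$-choice of identifying $\mathrm{Jac}(C)$ with a given polarized abelian variety, and rigidifying by $\iota$ merges the two sheets into one.

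The main obstacle is the full faithfulness step, and in particular its two ingredients: (i) upgrading strong Torelli from geometric fibers to the statement about the $\underline{\mathrm{Isom}}$-schemes of polarized Jacobians of non-hyperelliptic \emph{families}, namely that they are $\{\pm 1\}$ times the isomorphisms induced by curve isomorphisms; and (ii) matching this with the Hom-functors of the rigidification, i.e. confirming that $\mathcal{R}$ genuinely quotients the relevant automorphism and isomorphism functors by the central $\langle\iota\rangle$. Once these are established, the flatness, local finite presentation, and surjectivity assertions follow formally from properties (1)–(3) of the Torelli morphism recalled above.
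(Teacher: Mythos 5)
Your proof is correct in outline but takes a genuinely different route from the paper. The paper argues that the map is representable by schemes (because it is quasi-finite, separated, and induces an isomorphism on automorphism groups, exactly as in Proposition \ref{prop:ismomroot}), then observes it is finite and birational and invokes Zariski's main theorem, the target being smooth hence normal. You instead factor the map as the $2\!:\!1$ \'etale Torelli map followed by the $\ZZ/2\ZZ$-gerbe of rigidification, and aim to show the composite is a surjective, flat, locally finitely presented monomorphism, hence a surjective open immersion, hence an isomorphism. The trade-off is where the work sits: your flatness and surjectivity steps are indeed formal, but your monomorphism step requires the full two-object, relative form of the strong Torelli theorem --- that $\underline{\rm Isom}_T\bigl(({\rm Jac}(C),\Theta),({\rm Jac}(C'),\Theta')\bigr)\simeq\{\pm1\}\times\underline{\rm Isom}_T(C,C')$ as sheaves over an arbitrary base $T$ --- which you correctly flag as the main obstacle but do not actually discharge. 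The paper's route needs strictly less: representability only requires faithfulness on automorphism group schemes (injectivity, which follows from the single-object automorphism statement cited from Sekiguchi), and the generic part of full faithfulness is replaced by birationality, which is cheap because the generic non-hyperelliptic curve has trivial automorphisms and Torelli is injective on points. So your argument is viable and arguably more conceptual --- it exhibits the isomorphism as the gerbe exactly cancelling the $2\!:\!1$ ambiguity of $\tau$ --- but as written it leaves its hardest ingredient as an acknowledged verification, whereas the paper's ZMT argument closes using weaker, already-cited inputs.
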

\begin{proof}
 The map in question is representable by schemes exactly as in Prop. \ref{prop:ismomroot} as it is finite and the objects on both sides have the same automorphism groups. It is moreover birational, which implies that it must be an isomorphism by Zariski's main theorem.   
\end{proof}

This is sufficient to show that the Brauer group of $\mathcal{A}_3$ contains that of $\Mcal_3$.

\begin{prop}
The Brauer group of $\mathcal{A}_3^0$ contains a copy of ${\rm Br}(\bfk) \oplus \ZZ/2\ZZ$ which maps bijectively to the corresponding component in the Brauer group of $\Mcal_3$.
\end{prop}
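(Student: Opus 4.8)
The natural comparison map is the pullback $\tau^*\colon {\rm Br}(\mathcal{A}_3^0)_\ell \to {\rm Br}(\Mcal_3)_\ell$ along the Torelli morphism, and by \Cref{cor:brauer M3} the target is ${\rm Br}(\bfk)_\ell \oplus \ZZ/2\ZZ$. The plan is to produce inside ${\rm Br}(\mathcal{A}_3^0)_\ell$ an explicit subgroup $G \simeq {\rm Br}(\bfk)_\ell \oplus \ZZ/2\ZZ$ on which $\tau^*$ restricts to an isomorphism, built as the sum of a ``base'' summand and a single $2$-torsion class $\alpha_2'$ lifting the bitangent invariant $\alpha_2$. By the preceding lemmas I may freely replace ${\rm Br}(\mathcal{A}_3)_\ell$ by ${\rm Br}(\mathcal{A}_3^0)_\ell$, and over the open locus $\mathcal{A}_3^0 \smallsetminus \tau(\Hcal_3)$ I may use that it is the $\mu_2$-gerbe over $\Mcal_3\smallsetminus\Hcal_3$ for which the Torelli map is a section.

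The base summand is immediate. The structure morphism $\mathcal{A}_3^0 \to \Spec(\bfk)$ induces a pullback ${\rm Br}(\bfk)_\ell \to {\rm Br}(\mathcal{A}_3^0)_\ell$; since $\mathcal{A}_3^0$ has a $\bfk$-point (the Jacobian of any smooth genus three curve over $\bfk$, or, when $\bfk$ is finite, after the harmless passage to $\bfk(t)$ as in \Cref{rmk:splitting}(1)) this pullback is split injective. As the structure morphisms commute with $\tau$, the map $\tau^*$ carries this copy of ${\rm Br}(\bfk)_\ell$ isomorphically onto the ${\rm Br}(\bfk)_\ell$ summand of ${\rm Br}(\Mcal_3)_\ell$.

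The content lies in the $\ZZ/2\ZZ$ factor, for which I would construct $\alpha_2' \in {\rm Br}(\mathcal{A}_3^0)_\ell$ with $\tau^*\alpha_2' = \alpha_2$. The key geometric input is that the odd theta characteristics defining $\Scal^-_3 \to \Mcal_3$ are intrinsic to the principally polarized abelian variety: the $28$ odd theta characteristics of a genus three p.p.a.v. assemble into a finite \'etale degree $28$ cover $T \to \mathcal{A}_3^0$, and, because an odd theta characteristic of $C$ is the same datum as an odd theta characteristic of $({\rm Jac}(C),\Theta)$, this cover satisfies $\tau^* T \simeq \Scal^-_3$. The cover $T$ furnishes a morphism $\mathcal{A}_3^0 \to \Brm{\rm S}_{28}$, and I would take $\alpha_2'$ to be the pullback of the class $\alpha_2$ generating the $\ZZ/2\ZZ$ of \Cref{cor:brauer M3} (itself pulled back from ${\rm Br}(\Brm{\rm S}_{28})$). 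This class is $2$-torsion by construction, and the compatibility $\tau^* T \simeq \Scal^-_3$ forces $\tau^*\alpha_2' = \alpha_2$, which is nonzero by \Cref{prop:explicit}. Setting $G = {\rm Br}(\bfk)_\ell \oplus \ZZ/2\ZZ\cdot\alpha_2'$, a short chase (the base part lands in ${\rm Br}(\bfk)$ while $\tau^*\alpha_2'=\alpha_2$ lands in the complementary $\ZZ/2\ZZ$, so the kernel of $\tau^*|_G$ is trivial) shows $\tau^*|_G$ is a bijection onto ${\rm Br}(\Mcal_3)_\ell$.

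The main obstacle is justifying that $T$ is genuinely defined, finite and \'etale of degree $28$ over the \emph{whole} of $\mathcal{A}_3^0$ --- in particular across the hyperelliptic boundary $\tau(\Hcal_3)$ --- and that it pulls back to $\Scal^-_3$ compatibly with the canonical involution. On $\mathcal{A}_3^0 \smallsetminus \tau(\Hcal_3)$ this is governed by the preceding lemmas, which exhibit the locus as the $\mu_2$-gerbe over $\Mcal_3\smallsetminus\Hcal_3$ split by $\tau$, so there one simply transports $\alpha_2$ along the section. The real work is the extension over $\tau(\Hcal_3)$, which I would handle either through the intrinsic theta-characteristic description above (valid for every p.p.a.v., hence automatically defined on the closed locus, with parity preserved away from characteristic two) or, in parallel with \Cref{prop:extensionM3}, by verifying that the ramification of the transported class at $\tau(\Hcal_3)$ vanishes because $\alpha_2$ already extends across $\Hcal_3$ on the $\Mcal_3$ side.
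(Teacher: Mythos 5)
Your fallback route is, in essence, the paper's proof: the class is first produced on $\mathcal{A}_3^0\smallsetminus\tau(\Hcal_3)$ by transporting $\alpha_2$ along the rigidification map (of which the Torelli map is a section), and one then checks that its ramification along $\tau(\Hcal_3)$ vanishes. The one ingredient you leave implicit is why ``$\alpha_2$ already extends across $\Hcal_3$ on the $\Mcal_3$ side'' forces the ramification to vanish on the $\mathcal{A}_3$ side. The paper uses that $\tau$ is flat (a finite representable map of smooth Deligne--Mumford stacks), so that $\tau^*\circ\partial_{\tau(\Hcal_3)}=\partial_{\Hcal_3}\circ\tau^*$, together with the fact that $\tau$ restricted to $\Hcal_3$ is a locally closed immersion, so that the induced map ${\rm A}^0(\tau(\Hcal_3),\K_2)\to{\rm A}^0(\Hcal_3,\K_2)$ is the identity and in particular injective. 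Without that injectivity, the vanishing of $\partial_{\Hcal_3}(\tau^*\alpha)$ would not imply the vanishing of $\partial_{\tau(\Hcal_3)}(\alpha)$: a map of positive degree on the divisor could kill a $2$-torsion ramification class. With this point made explicit, your second route is complete and coincides with the paper's argument.

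Your primary route --- realizing the $28$ odd theta characteristics intrinsically on the abelian variety as the odd symmetric theta divisors, so that the degree-$28$ \'etale cover $T$ and hence $\alpha_2'$ are defined on all of $\mathcal{A}_3^0$ from the outset --- is genuinely different from the paper and more conceptual, but it is not free. You would need (a) that the symmetric line bundles representing the principal polarization form an $A[2]$-torsor whose parity function is locally constant in families in characteristic $\neq 2$, so that the odd ones form a finite \'etale cover of degree $2^{g-1}(2^g-1)=28$ over the whole of $\mathcal{A}_3^0$, and (b) the identification $\tau^*T\simeq\Scal_3^-$, which rests on the Riemann singularity theorem. Both are classical (Mumford's theory of parity of theta characteristics), but they are exactly the inputs the paper's ramification argument is designed to avoid; note that for (b) it suffices to produce the identification over $\Mcal_3\smallsetminus\Hcal_3$, since cohomological invariants of a smooth stack inject under restriction to a dense open substack. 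The base ${\rm Br}(\bfk)$ summand and the concluding diagram chase are handled the same way in both arguments.
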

\begin{proof}
The lemma above and our computation of ${\rm Br}(\Mcal_3 \smallsetminus \Hcal_3)$ shows that the Brauer group of $\mathcal{A}^0_3 \smallsetminus \tau(\Hcal_3)$ contains a copy of ${\rm Br}(\bfk) \oplus \ZZ/2\ZZ$ which pulls back to the appropriate elements in the Brauer group of $\Mcal_3 \smallsetminus \Hcal_3$. We just need to show that the element $\alpha$ generating the copy of $\ZZ/2\ZZ$ does not ramify on $\tau(\Hcal_3)$. 

The map $\Mcal_3 \to \mathcal{A}_3$ is flat (being a representable finite map between smooth Deligne-Mumford stacks). Consider the following diagram which is commutative by the compatibility of ramification and flat pullback:
\[
\xymatrixcolsep{1pc}\xymatrix{
 {\rm A}^0(\mathcal{A}^0_3 \smallsetminus \tau(\Hcal_3), \K_2) \ar[d]^{\partial_1} \ar[r]^{\tau^*} & {\rm A}^0(\Mcal_3 \smallsetminus \Hcal_3, \K_2) \ar[d]^{\partial_2}\\
 {\rm A}^0(\tau(\Hcal_3), \K_2) \ar[r]^{\tau^*} & {\rm A}^0(\Hcal_3, \K_2)
 }
\]
the bottom map is the identity as the Torelli map is an immersion when restricted to the hyperelliptic locus. But then we have
\[\tau^*(\partial_1 \alpha) = \partial_2 (\tau^* \alpha) = 0\] 
showing that $\partial_1 \alpha=0$.
\end{proof}

Lastly we need to compute the image of the hyperelliptic locus in the Picard group of $\mathcal{A}_3$.

\begin{lm}
The class of $\tau(\Hcal_3)$ is divisible by $18$ in the Picard group of $\mathcal{A}_3^0$. In particular if ${\rm char}(\bfk) \neq 3$ then $\H^1(\mathcal{A}^0_3\smallsetminus \tau(\Hcal_3), \mu_{18})$ contains an element $\beta$ of order $18$. If ${\rm char}(\bfk)=3$ then we can find an element $\beta$ of order $2$ in $\H^1(\mathcal{A}^0_3\smallsetminus \tau(\Hcal_3), \mu_{2})$.
\end{lm}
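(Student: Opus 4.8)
The plan is to compute the class $[\tau(\Hcal_3)]$ exactly inside ${\rm Pic}(\mathcal{A}_3^0)$, rather than merely bounding its divisibility. First I would reduce to $\mathcal{A}_3$ itself: since the complement of $\mathcal{A}_3^0$ in $\mathcal{A}_3$ has codimension $\geq 2$ we have ${\rm Pic}(\mathcal{A}_3^0)={\rm Pic}(\mathcal{A}_3)$, and I would invoke the classical identification ${\rm Pic}(\mathcal{A}_3)=\ZZ\cdot\lambda$, where $\lambda$ is the Hodge class (the determinant of the Hodge bundle). Because the cotangent space of ${\rm Jac}(C)$ at the origin is canonically $\H^0(C,\omega_C)$, the Torelli map pulls the Hodge bundle of $\mathcal{A}_3$ back to the Hodge bundle of $\Mcal_3$, so $\tau^*\lambda=\lambda_{\Mcal_3}$. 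Since ${\rm Pic}(\Mcal_3)=\ZZ\cdot\lambda_{\Mcal_3}$ as well, the pullback $\tau^*\colon{\rm Pic}(\mathcal{A}_3^0)\to{\rm Pic}(\Mcal_3)$ carries generator to generator and is therefore an isomorphism. It thus suffices to compute $\tau^*[\tau(\Hcal_3)]$ in ${\rm Pic}(\Mcal_3)$.

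For this I would exploit that $\tau$ is representable, finite and flat of degree two (being étale of degree two away from $\Hcal_3$), while its restriction $\tau|_{\Hcal_3}$ to the hyperelliptic locus is a locally closed immersion, hence of degree one onto its image. As $\tau^{-1}(\tau(\Hcal_3))=\Hcal_3$ set-theoretically and $\Hcal_3$ is irreducible, flat pullback gives $\tau^*[\tau(\Hcal_3)]=m\,[\Hcal_3]$ for a single integer $m\geq 1$; in particular $\tau^*[\tau(\Hcal_3)]=9m\,\lambda_{\Mcal_3}\neq 0$, so $[\tau(\Hcal_3)]\neq 0$ in the torsion-free group ${\rm Pic}(\mathcal{A}_3^0)$. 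To pin down $m$ I would push forward and use the projection formula: on one side $\tau_*\tau^*[\tau(\Hcal_3)]=\deg(\tau)\,[\tau(\Hcal_3)]=2\,[\tau(\Hcal_3)]$, while on the other $\tau_*(m[\Hcal_3])=m\deg(\tau|_{\Hcal_3})\,[\tau(\Hcal_3)]=m\,[\tau(\Hcal_3)]$; comparing these and using $[\tau(\Hcal_3)]\neq 0$ forces $m=2$. Feeding in the classical plane-quartic value $[\Hcal_3]=9\,\lambda_{\Mcal_3}$ then gives $\tau^*[\tau(\Hcal_3)]=2[\Hcal_3]=18\,\lambda_{\Mcal_3}$, and by the isomorphism $\tau^*$ I conclude $[\tau(\Hcal_3)]=18\,\lambda$, in particular divisible by $18$.

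The ``in particular'' clause would follow from the Kummer sequence on $\mathcal{U}:=\mathcal{A}_3^0\smallsetminus\tau(\Hcal_3)$, whose Picard group is ${\rm Pic}(\mathcal{A}_3^0)/\langle[\tau(\Hcal_3)]\rangle=\ZZ\lambda/18\ZZ\lambda\cong\ZZ/18\ZZ$ independently of the characteristic. When ${\rm char}(\bfk)\neq 3$ the sheaf $\mu_{18}$ is étale (as ${\rm char}(\bfk)\neq 2$ is standing), so the boundary map $\H^1(\mathcal{U},\mu_{18})\to{\rm Pic}(\mathcal{U})[18]={\rm Pic}(\mathcal{U})$ is surjective and any lift $\beta$ of the generator $\lambda|_{\mathcal{U}}$ has order $18$. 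When ${\rm char}(\bfk)=3$ I would instead write $[\tau(\Hcal_3)]=2\cdot(9\lambda)$, observe that $9\lambda|_{\mathcal{U}}$ is the unique element of order two in ${\rm Pic}(\mathcal{U})$, and lift it through the surjection $\H^1(\mathcal{U},\mu_2)\to{\rm Pic}(\mathcal{U})[2]$ to obtain a class $\beta$ of order two.

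The main obstacle is securing the multiplicity $m=2$, since this is precisely the factor that upgrades the divisibility from $9$ (the value visible on $\Mcal_3\smallsetminus\Hcal_3$ through $\beta_1$) to $18$; it encodes the extra $\ZZ/2$ coming from the canonical involution that distinguishes $\mathcal{A}_3$ from $\Mcal_3$, and mishandling either $\deg(\tau)$ or the behaviour of $\tau$ along $\Hcal_3$ would change the final answer. A secondary point requiring care is that the two Picard-theoretic inputs, namely ${\rm Pic}(\mathcal{A}_3)=\ZZ\lambda$ and $[\Hcal_3]=9\,\lambda$, are used integrally over an arbitrary field of characteristic different from $2$, positive characteristic included, and not merely over $\mathbb{C}$ or rationally.
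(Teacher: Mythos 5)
Your computation runs ``upstairs'': you pull $[\tau(\Hcal_3)]$ back to $\Mcal_3$, identify the pullback as $2[\Hcal_3]=18\lambda_{\Mcal_3}$, and then descend through $\tau^*$. The paper works ``downstairs'' in one line: since $\tau|_{\Hcal_3}$ is an immersion, $[\tau(\Hcal_3)]=\tau_*[\Hcal_3]=\tau_*(9\,\tau^*\lambda)=9\deg(\tau)\,\lambda=18\lambda$ by the projection formula. Both arguments rest on the same geometric inputs (degree two of the Torelli map, immersion along the hyperelliptic locus, $[\Hcal_3]=9\lambda_{\Mcal_3}$, compatibility of Hodge bundles), and your determination of the multiplicity $m=2$ by comparing $\tau_*\tau^*[\tau(\Hcal_3)]$ with $\tau_*(m[\Hcal_3])$ is sound. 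The real difference is how much you need to know about ${\rm Pic}(\mathcal{A}_3^0)$.

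That is also where the gap sits, and it is exactly the point you flag without resolving: your descent requires ${\rm Pic}(\mathcal{A}_3^0)=\ZZ\lambda$ \emph{integrally} over an arbitrary field of characteristic different from $2$ --- torsion-freeness to get $[\tau(\Hcal_3)]\neq 0$ (hence $m=2$), and generation by $\lambda$ to invert $\tau^*$. Without it, your argument only yields that $[\tau(\Hcal_3)]-18\lambda$ lies in $\ker\tau^*$, and since $\tau_*\tau^*$ is multiplication by $2$ this kernel is a priori only contained in the $2$-torsion, so divisibility by $18$ does not follow. This Picard-group input is nowhere established in the paper and is not an innocuous citation in positive characteristic; it is also unnecessary, because the pushforward identity above gives the divisibility with no knowledge of ${\rm Pic}(\mathcal{A}_3^0)$ at all. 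The same dependence propagates into your Kummer-sequence step, where you need ${\rm Pic}(\mathcal{A}_3^0\smallsetminus\tau(\Hcal_3))\cong\ZZ/18\ZZ$; the paper instead lifts $1\in{\rm A}^0(\tau(\Hcal_3),\K_{18}^0)=\ZZ/18\ZZ$ through the localization sequence, using only that $i_*(1)=[\tau(\Hcal_3)]$ vanishes modulo $18$, so the lift $\beta$ has order $18$ because its boundary does. Replacing the descent through $\tau^*$ by the direct pushforward repairs the argument; the rest of your write-up then goes through.
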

\begin{proof}
Let $\Xcal_3 \to \mathcal{A}_3$ be the universal abelian variety and consider the Hodge bundle on $\mathcal{A}_3$. We can see it as the pullback of $\Omega_{\Xcal_3/\mathcal{A}_3}$ through the zero section. Recall that for a smooth projective scheme $\pi\colon X\to S$, the relative tangent sheaf of the Picard variety $\underline{\rm Pic}^0_{X/S}$ is canonically identified with $R^1\pi_*\Ocal_{X}$. Applying this to the case of a smooth curve $C\to S$ of genus three, we deduce that the pullback of the Hodge bundle on $\mathcal{A}_3$ through the Torelli map is equal to the Hodge bundle on $\Mcal_3$.

 Now, in the Picard group of $\Mcal_3$ we have $\left[\Hcal_3\right] = 9\lambda_1 $, where $\lambda_1$ is the first Chern class of the Hodge bundle, and the class of $\tau(\Hcal_3)$ is equal to $\tau_*\left[\Hcal_3\right]$ as the Torelli map is an immersion when restricted to $\Hcal_3$. Using the projection formula we get
\[
\left[\tau(\Hcal_3)\right] = \tau_*(9\lambda_1)=9\tau_*(\tau^*\lambda_1)=18\lambda_1.
\]
We prove the second statement for ${\rm char}(\bfk)\neq 3$: the proof for ${\rm char}(\bfk)= 3$ is identical. Consider the exact sequence
\[
{\rm A}^0(\mathcal{A}_3^0,\K_{18}) \to {\rm A}^0(\mathcal{A}_3^0\smallsetminus \tau(\Hcal_3), \K_{18}) \to {\rm A}^0(\tau(\Hcal_3), \K_{18}) \xrightarrow{i_*} {\rm A}^1(\mathcal{A}_3^0, \K_{18})
\]
By the projection formula the image of $1 \in {\rm A}^0(\tau(\Hcal_3), \K_{18})$ through $i_*$ is zero, so there must be an element $\beta \in {\rm A}^0(\mathcal{A}_3^0\smallsetminus \tau(\Hcal_3), \K^1_{18})$ mapping to it. We conclude by observing that ${\rm A}^0(\mathcal{A}_3^0\smallsetminus \tau(\Hcal_3), \K^1_{18}) = \H^1(\mathcal{A}_3^0\smallsetminus \tau(\Hcal_3), \mu_{18})$ by \cite{DilPirBr}*{Lm. 2.18}.
\end{proof}
We first compute the Brauer group of $\mathcal{A}_3 \smallsetminus \overline{\tau(\Hcal_3)}$.

\begin{prop}\label{prop:BrA3minus}
Let $\bfk$ be a field of characteristic different from $2$, and let $\ell$ be an even positive integer not divisible by ${\rm char}(\bfk)$. Then
\begin{enumerate}
    \item If ${\rm char}(\bfk)=0$ then ${\rm Br}(\mathcal{A}_3 \smallsetminus \overline{\tau(\Hcal_3)}) = {\rm Br}(\bfk) \oplus \H^1(\bfk,\ZZ/18\ZZ) \oplus \ZZ/2\ZZ.$
    \item If ${\rm char}(\bfk)\neq 0$ then ${\rm Br}(\mathcal{A}_3 \smallsetminus \overline{\tau(\Hcal_3)})_\ell = {\rm Br}(\bfk)_\ell \oplus \H^1(\bfk,\ZZ/18\ZZ)_\ell \oplus \ZZ/2\ZZ$.
\end{enumerate}
\end{prop}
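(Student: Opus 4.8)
The plan is to reduce the whole computation to the trivialized $\mu_2$-gerbe structure of $\mathcal{A}^0_3 \smallsetminus \tau(\Hcal_3)$ over $\Mcal_3 \smallsetminus \Hcal_3$ and then quote Theorem \ref{thm:inv M3}. First I would replace ${\rm Br}(\mathcal{A}_3 \smallsetminus \overline{\tau(\Hcal_3)})_\ell$ by ${\rm Br}(\mathcal{A}^0_3 \smallsetminus \tau(\Hcal_3))_\ell$ by the first lemma of this section, since the decomposable locus has codimension $\geq 2$ and so does not affect the $\ell$-torsion of the cohomological Brauer group.

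The key structural point is that $\mathcal{A}^0_3 \smallsetminus \tau(\Hcal_3) \to \Mcal_3 \smallsetminus \Hcal_3$ is a gerbe banded by the order-two group generated by the canonical involution $-1$, which is $\mu_2$ since ${\rm char}(\bfk)\neq 2$; this is exactly the rigidification lemma. The Torelli morphism $\tau$ is a global section of this gerbe, because its composite with the rigidification map is the identity (indeed the isomorphism of the rigidification lemma \emph{is} the one induced by $\tau$). Hence the gerbe is neutral, and therefore
\[
\mathcal{A}^0_3 \smallsetminus \tau(\Hcal_3) \simeq (\Mcal_3\smallsetminus\Hcal_3)\times \Brm\mu_2 .
\]
From this viewpoint the section $\tau$ is precisely the degree-two étale $\mu_2$-torsor $\Mcal_3\smallsetminus\Hcal_3 \to \mathcal{A}^0_3\smallsetminus\tau(\Hcal_3)$, which is what "$\tau$ is étale of degree two over $\Mcal_3\smallsetminus\Hcal_3$" means at the stacky level.

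Next I would compute ${\rm Inv}^{\leq 2}$ of this product. Using ${\rm Inv}^\bullet(\Brm\mu_2,\M)=\M^\bullet(\bfk)\oplus w_1\cdot\M^\bullet(\bfk)_2$, where $w_1\in{\rm Inv}^1(\Brm\mu_2,\K_2)$ is the class of the universal $\mu_2$-torsor (the $2$-torsion coefficients forced as in Remark \ref{rmk:ellproduct}), together with equivariant approximation of the $\mu_2$-factor — realizing $\Brm\mu_2$ as the complement of the zero section in $\Ocal(-2)$ over $\PP^{n-1}$, whose only effect in codimension zero is to adjoin $w_1$ with $2$-torsion coefficients — the invariants of the product form a free module on $1$ and $w_1$:
\[
{\rm Inv}^{\leq 2}(\mathcal{A}^0_3\smallsetminus\tau(\Hcal_3),\M)={\rm Inv}^{\leq 2}(\Mcal_3\smallsetminus\Hcal_3,\M)\ \oplus\ w_1\cdot{\rm Inv}^{\leq 1}(\Mcal_3\smallsetminus\Hcal_3,\M)_2 .
\]
Feeding in Theorem \ref{thm:inv M3} and using that $\beta_1\cdot\M^\bullet(\bfk)_9$ is $9$-torsion (hence has no $2$-torsion), the second summand contributes only $w_1\cdot\M^{\leq 1}(\bfk)_2$. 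Specializing to $\M=\H_{\mu_\ell^\vee}$ and extracting degree two, the term $\M^2(\bfk)$ gives ${\rm Br}(\bfk)_\ell$, the term $\alpha_2\cdot\M^0(\bfk)_2$ gives the $\ZZ/2\ZZ$ (the bitangent/theta class pulled back from $\Mcal_3$), while the degree-two pieces $\beta_1\cdot\M^1(\bfk)_9$ and $w_1\cdot\M^1(\bfk)_2$ give $\H^1(\bfk,\ZZ/9\ZZ)_\ell$ and $\H^1(\bfk,\ZZ/2\ZZ)$. Since $\gcd(2,9)=1$ these assemble into $\H^1(\bfk,\ZZ/18\ZZ)_\ell$, matching the order-$18$ class $\beta$ of the last lemma (which is $\beta_1+w_1$ up to units); the case ${\rm char}(\bfk)=3$ is identical, the $9$-part simply degenerating in accordance with $\H^1(\bfk,\ZZ/18\ZZ)_\ell=\H^1(\bfk,\ZZ/2\ZZ)_\ell$. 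The splittings of ${\rm Br}(\bfk)_\ell$ and of the $\H^1(\bfk,\ZZ/18\ZZ)_\ell$ summand follow from the existence of a $\bfk$-point and the cup-product description, exactly as in Corollary \ref{cor:brauer M3}.

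The main obstacle I expect is justifying the product (Künneth-type) formula for ${\rm Inv}^{\leq 2}$ of $(\Mcal_3\smallsetminus\Hcal_3)\times\Brm\mu_2$: that adjoining the trivial $\mu_2$-gerbe introduces only the single generator $w_1$ with $2$-torsion coefficients and no cross terms in this range, and that the twists are tracked carefully enough that the $9$-torsion ($\beta_1$) and $2$-torsion ($w_1$) contributions genuinely combine into $\H^1(\bfk,\ZZ/18\ZZ)$ rather than remaining separate. Verifying the neutrality of the gerbe rigorously — that $\tau$ is a section in the stacky sense, so that the product decomposition is literally valid — is a secondary, more routine, point to pin down.
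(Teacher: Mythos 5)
Your proposal is correct, but it takes a genuinely different route from the paper. The paper never trivializes the gerbe: it runs the seven-term Leray exact sequence for $\Gm$ along $\pi\colon\mathcal{A}^0_3\smallsetminus\tau(\Hcal_3)\to\Mcal_3\smallsetminus\Hcal_3$, computes the stalks ${\rm R}^1\pi_*\Gm=\ZZ/2\ZZ$ and $({\rm R}^2\pi_*\Gm)_\ell=0$ by applying \cite{DilPirBr}*{Prop. 4.4} to $\Brm\ZZ/2\ZZ\times\Spec(R)$ for $R$ strictly Henselian, and then splits the resulting surjection ${\rm Br}(\mathcal{A}^0_3\smallsetminus\tau(\Hcal_3))_\ell\to\H^1(\bfk,\ZZ/2\ZZ)$ by cup product with the explicit class $9\beta$, where $\beta$ is the order-$18$ class produced from the $18$-divisibility of $[\tau(\Hcal_3)]$ in ${\rm Pic}(\mathcal{A}_3^0)$. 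You instead observe that the Torelli map is a section of the rigidification gerbe, so the gerbe is neutral and $\mathcal{A}^0_3\smallsetminus\tau(\Hcal_3)\simeq(\Mcal_3\smallsetminus\Hcal_3)\times\Brm\mu_2$, and then apply the product formula for invariants; note that this formula is not really your ``main obstacle'' --- it is exactly \cite{DilPirBr}*{Prop. 4.4}, which the paper itself invokes (only stalk-wise), and your equivariant-approximation sketch reproves it. Conversely, what you call the ``secondary, more routine'' point --- that $\tau$ really is a section, via the rigidification lemma, so that the banded $\mu_2$-gerbe is trivial --- is the load-bearing observation of your argument and is the one step you should spell out (the paper only uses the weaker fact that $\tau^*$ splits $\pi^*$ on cohomology). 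Your route is arguably cleaner and yields the full low-degree invariants of $\mathcal{A}^0_3\smallsetminus\tau(\Hcal_3)$, not just the Brauer group; the paper's route has the advantage that the class $\beta$ it constructs is reused immediately afterwards to show that the $\H^1(\bfk,\ZZ/18\ZZ)$ summand ramifies along $\tau(\Hcal_3)$, which your $w_1$ does not directly provide (your parenthetical identification of $\beta$ with $\beta_1+w_1$ up to units would need a separate check if you wanted to carry it into the final theorem). Two small points to pin down: you should say that $(\Mcal_3\smallsetminus\Hcal_3)\times\Brm\mu_2$ is a smooth quotient stack so that ${\rm Inv}^2(-,\H_{\mu_\ell^\vee})={\rm Br}'(-)_\ell$ applies, and you should distinguish ${\rm Inv}^1(-,\M)_2$ from ${\rm Inv}^1(-,\M_2)$ (the statement of the product formula uses the latter), though for $\M=\H_{\mu_\ell^\vee}$ with $\ell$ even both give $\H^1(\bfk,\ZZ/2\ZZ)$ here.
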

\begin{proof}
It suffices to prove the statement for $\mathcal{A}_3^0 \smallsetminus \tau(\Hcal_3)$. 
Consider the $\ZZ/2\ZZ$-gerbe $\mathcal{A}^0_3 \smallsetminus \tau(\Hcal_3) \to \Mcal_3\smallsetminus \Hcal_3$ induced by the rigidification. We know that the induced pullback is injective on cohomology as there is a splitting. The seven-terms exact sequence obtained applying the Leray spectral sequence for $\Gm$ gives us
\[ 0 \to \H^2(\Mcal_3 \smallsetminus \Hcal_3, \Gm) \to {\rm Ker}(\H^2(\mathcal{A}_3^0 \smallsetminus \tau(\Hcal_3),\Gm)\to \H^0(\Mcal_3 \smallsetminus \Hcal_3, {\rm R^2} \pi_* \Gm)) \]\[\to \H^1(\Mcal_3 \smallsetminus \Hcal_3, {\rm R}^1\pi_* \Gm).\]
The stalks of our morphism are equal to $\Brm \ZZ/2\ZZ \times \Spec(R)$ where $R$ is a regular (in fact, smooth over the base) strictly Henselian ring. Using the projection formula we see that 
\[\H^1(\Brm \ZZ/2\ZZ \times \Spec(R), \Gm)={\rm Pic}(\Brm \ZZ/2\ZZ \times \Spec(R))\]
is $2$-torsion. Then we have to check the image of
\[\H^1(\Brm \ZZ/2\ZZ \times \Spec(R), \mu_2)\to \H^1(\Brm \ZZ/2\ZZ \times \Spec(R), \Gm)_2 \to 0.\]
The group on the left is equal to ${\rm Inv}^1(\Brm \ZZ/2\ZZ \times \Spec(R),\K_2)$ which by \cite{DilPirBr}*{Prop. 4.4}  is equal to
\[{\rm Inv}^1(\Spec(R),\K_2) \oplus {\rm Inv}^0(\Spec(R),\K_2)\left[1\right]=\ZZ/2\ZZ\]
because the cohomological invariants with coefficients in $\H_D$ of an Henselian ring that is smooth over the base field are equal to its cohomology (Sec. \ref{sec:coh}, point 3) and $\H^{\bullet}(R,D)=\H^0(R,D)=D$. The copy of $\ZZ/2\ZZ$ cannot map to zero as its pullback to 
\[{\rm Inv}^1(\Brm \ZZ/2\ZZ \times \Spec(R/m),\K_2)=\ZZ/2\ZZ\]
is nonzero. This allows us to conclude that ${\rm R}^1\pi_*\Gm = \ZZ/2\ZZ$. Our exact sequence is now
\[ 0 \to \H^2(\Mcal_3 \smallsetminus \Hcal_3, \Gm) \to {\rm Ker}(\H^2(\mathcal{A}_3^0 \smallsetminus \tau(\Hcal_3),\Gm)\to \H^0(\Mcal_3 \smallsetminus \Hcal_3, {\rm R^2} \pi_* \Gm))\]\[\to \H^1(\Mcal_3 \smallsetminus \Hcal_3, \ZZ/2\ZZ).\]
we are only interested in the $\ell$-torsion part of these groups, so we can restrict to
\[ 0 \to \H^2(\Mcal_3 \smallsetminus \Hcal_3, \Gm)_{\ell} \to {\rm Ker}(\H^2(\mathcal{A}_3^0 \smallsetminus \tau(\Hcal_3),\Gm)\to \H^0(\Mcal_3 \smallsetminus \Hcal_3, {\rm R^2} \pi_* \Gm))_{\ell}\]\[\to \H^1(\Mcal_3 \smallsetminus \Hcal_3, \ZZ/2\ZZ).\]
Now we consider $\H^2(\Brm \ZZ/2\ZZ \times \Spec(R),\Gm)_{\ell}$. We have that
\[\H^2(\Brm \ZZ/2\ZZ \times\Spec(R),\Gm)_\ell={\rm Br}'(\Brm \ZZ/2\ZZ \times \Spec(R))_\ell = {\rm Inv}^2(\Brm \ZZ/2\ZZ \times \Spec(R),\H_{\mu_\ell^\vee})\]
% Using the Hochschild-Serre spectral sequence \cite{Mil}*{Thm 2.20} we have that 
% \[\H^2(\Brm \ZZ/2\ZZ \times \Spec(R),\mu_\ell)=\H^2_{\rm Grp}(\ZZ/2\ZZ,\ZZ/\ell\ZZ)=\ZZ/2\ZZ\]
% which toghether with the fact that ${\rm Pic}(\Brm \ZZ/2\ZZ \times \Spec(R))=\ZZ/2\ZZ$ proves that 
% \[\H^2(\Brm \ZZ/2\ZZ \times \Spec(R),\Gm)_{\ell}=0 \Rightarrow \H^0(\Mcal_3 \smallsetminus \Hcal_3, {\rm R^2} \pi_* \Gm)_\ell=0\]
and again by \cite{DilPirBr}*{Prop. 4.4} this is equal to
\[{\rm Inv}^2(\Spec(R),\H_{\mu_\ell^\vee}) \oplus {\rm Inv}^1(\Spec(R),\H_{\mu_2^\vee})\left[1\right]=0\]
which immediately implies that 
\[\H^0(\Mcal_3 \smallsetminus \Hcal_3, {\rm R^2} \pi_* \Gm)_\ell=0\]
and our sequence becomes
\[ 0 \to \H^2(\Mcal_3 \smallsetminus \Hcal_3, \Gm)_\ell \to \H^2(\mathcal{A}^0_3 \smallsetminus \tau(\Hcal_3),\Gm)_\ell \to \H^1(\Mcal_3 \smallsetminus \Hcal_3, \ZZ/2\ZZ).\]
We know that 
\[\H^2(\Mcal_3 \smallsetminus \Hcal_3, \Gm)_{\ell}={\rm Br}(\bfk)_\ell \oplus {\rm H}^1(\bfk, \ZZ/9\ZZ)_\ell \oplus \ZZ/2\ZZ.\]
Moreover, by our description of the low degree cohomological invariants of $\Mcal_3 \smallsetminus \Hcal_3$ and identifying $\mu_2=\ZZ/2\ZZ$ we get
\[\H^1(\Mcal_3 \smallsetminus \Hcal_3, \ZZ/2\ZZ)={\rm Inv}^1(\Mcal_3 \smallsetminus \Hcal_3,\K_2)=\H^1(\bfk,\ZZ/2\ZZ)\]
so we get
\[0 \to {\rm Br}(\bfk)_\ell \oplus {\rm H}^1(\bfk, \ZZ/9\ZZ)_\ell \oplus \ZZ/2\ZZ \to {\rm Br}(\mathcal{A}^0_3 \smallsetminus \tau(\Hcal_3))_\ell \to \H^1(\bfk,\ZZ/2\ZZ).\]
We will now construct a splitting of the last map.

Consider the $2$-torsion element $9\beta \in \H(\mathcal{A}^0_3 \smallsetminus \tau(\Hcal_3),\mu_2)=\H(\mathcal{A}^0_3 \smallsetminus \tau(\Hcal_3),\ZZ/2\ZZ)$  we defined earlier. Looking at the low degree exact sequence
\[ 0 \to H^1(\Mcal_3 \smallsetminus \Hcal_3, \Gm) \to {\rm H}^1(\mathcal{A}^0_3 \smallsetminus \tau(\Hcal_3), \Gm) \to H^0(\Mcal_3 \smallsetminus \Hcal_3, \ZZ/2\ZZ)=\ZZ/2\ZZ\]
we conclude that the boundary map must send $9\beta$ to $1$. Now if we take the subgroup $9\beta \cap {\rm H}^1(\bfk, \ZZ/2\ZZ) \subset {\rm Br}(\mathcal{A}^0_3 \smallsetminus \tau(\Hcal_3))$, by the compatibility of the spectral sequence with the cap product structure the image of $9\beta \cap x$ in $\H^1(\Mcal_3 \smallsetminus \Hcal_3, \ZZ/2\ZZ)$ is $x$, so that the map $x \mapsto 9\beta \cap x$ is the splitting we were looking for.
\end{proof}

All we have to do now is showing that the extra elements in ${\rm Br}(\mathcal{A}_3 \smallsetminus \tau(\Hcal_3))$ ramify on the image of the hyperelliptic locus.

\begin{thm}
Let $\bfk$ be a field of characteristic different from $2$, and let $\mathcal{A}_3$ be the stack of three-dimensional principally polarized abelian varieties over $\bfk$. Then
\begin{enumerate}
    \item If ${\rm char}(\bfk)=0$ we have ${\rm Br}(\mathcal{A}_3)={\rm Br}(\bfk) \oplus \ZZ/2\ZZ$.
    \item If ${\rm char}(\bfk)=p$ we have ${\rm Br}(\mathcal{A}_3)={\rm Br}(\bfk) \oplus \ZZ/2\ZZ\oplus B''_p$ where $B''_p$ is a $p$-primary torsion subgroup.
\end{enumerate}
\end{thm}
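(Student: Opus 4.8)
The plan is to derive the theorem from the computation of ${\rm Br}(\mathcal{A}_3 \smallsetminus \overline{\tau(\Hcal_3)})$ in \Cref{prop:BrA3minus} by deciding which of its classes extend across the hyperelliptic divisor. First I would reduce to the indecomposable locus: since $\mathcal{A}_3 \smallsetminus \mathcal{A}_3^0$ has codimension at least two, the preceding lemma gives ${\rm Br}(\mathcal{A}_3)_\ell = {\rm Br}(\mathcal{A}_3^0)_\ell$, so it suffices to compute the latter for every even $\ell$ coprime to ${\rm char}(\bfk)$. As $\mathcal{A}_3^0$ is a smooth quotient stack and $\tau(\Hcal_3)$ is a reduced codimension-one closed substack, the localization sequence for Chow groups with coefficients, the injectivity of restriction to an open, and the identification ${\rm Inv}^2(-,\H_{\mu_\ell^\vee}) = {\rm Br}'(-)_\ell = {\rm Br}(-)_\ell$ together identify
\[ {\rm Br}(\mathcal{A}_3^0)_\ell = \ker\bigl(\partial\colon {\rm Br}(\mathcal{A}_3^0 \smallsetminus \tau(\Hcal_3))_\ell \longrightarrow {\rm A}^0(\tau(\Hcal_3),\H_{\mu_\ell^\vee}^1)\bigr), \]
where $\partial$ is the boundary of the genuine localization sequence on the stack $\mathcal{A}_3^0$, the very one used earlier to construct $\beta$.

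I would then run through the three summands of ${\rm Br}(\mathcal{A}_3^0 \smallsetminus \tau(\Hcal_3))_\ell = {\rm Br}(\bfk)_\ell \oplus \H^1(\bfk,\ZZ/18\ZZ)_\ell \oplus \ZZ/2\ZZ$ and compute their images under $\partial$. The summand ${\rm Br}(\bfk)_\ell$ consists of constant, hence unramified, classes, so it lies in $\ker\partial$. The copy of $\ZZ/2\ZZ$ generated by the theta/Stiefel–Whitney class $\alpha$ also lies in $\ker\partial$: this is precisely the content of the earlier proposition, whose commutative square comparing the ramifications on $\mathcal{A}_3^0 \smallsetminus \tau(\Hcal_3)$ and on $\Mcal_3 \smallsetminus \Hcal_3$ along the Torelli map — an immersion, and the identity, on the hyperelliptic locus — forces $\partial\alpha = 0$.

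The heart of the argument is to show that the whole summand $\H^1(\bfk,\ZZ/18\ZZ)_\ell$ ramifies nontrivially, and in particular that its two-torsion part is not confused with the surviving $\alpha$. Because $[\tau(\Hcal_3)] = 18\lambda_1$ in ${\rm Pic}(\mathcal{A}_3^0)$, the divisibility lemma produces a class $\beta \in {\rm Inv}^1(\mathcal{A}_3^0 \smallsetminus \tau(\Hcal_3),\K_{18})$ whose ramification $\partial\beta$ generates ${\rm A}^0(\tau(\Hcal_3),\K_{18}^0) = \ZZ/18\ZZ$, and the summand in question is $\beta\cdot\H^1(\bfk,\ZZ/18\ZZ)$. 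For $x$ pulled back from the base field, the compatibility of $\partial$ with the module product (R3f, as in \Cref{rmk:splitting}) gives $\partial(\beta\cdot x) = (\partial\beta)\cdot\bar x = \bar x$; since $\tau(\Hcal_3)\simeq\Hcal_3$ carries a $\bfk$-point — or after the harmless base change to $\bfk(t)$ of \Cref{rmk:splitting} — the restriction $\M^1(\bfk)\to{\rm A}^0(\tau(\Hcal_3),\M^1)$ is injective, so $\partial(\beta\cdot x)\neq 0$ for every $x\neq 0$. In particular the two-torsion products $9\beta\cdot x$ ramify to $9\bar x \neq 0$, which is exactly what separates them from the unramified $\alpha$. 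Hence $\ker\partial = {\rm Br}(\bfk)_\ell \oplus \ZZ/2\ZZ$, so that ${\rm Br}(\mathcal{A}_3^0)_\ell = {\rm Br}(\bfk)_\ell \oplus \ZZ/2\ZZ$.

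Finally I would assemble the characteristic cases: ranging over all even $\ell$ coprime to ${\rm char}(\bfk)$ recovers the prime-to-$p$ torsion of ${\rm Br}(\mathcal{A}_3)$ as ${\rm Br}(\bfk)\oplus\ZZ/2\ZZ$ away from $p$, which in characteristic zero is the full Brauer group, while in characteristic $p>2$ one gathers the uncontrolled $p$-primary torsion into $B''_p$. The step I expect to be the genuine obstacle is not this formal assembly but the fine separation of the two-torsion: one must be sure that exactly the theta-characteristic class $\alpha$ survives, and that none of the $\beta$-derived two-torsion hidden inside $\H^1(\bfk,\ZZ/18\ZZ)$ does. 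This rests on the two inputs $\partial\alpha=0$ and the statement that $\partial\beta$ generates $\ZZ/18\ZZ$, and it is here — through the immersion property of the Torelli map on $\Hcal_3$, which makes both ramifications explicitly computable — that the geometry of the problem does the essential work.
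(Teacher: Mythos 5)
Your proposal is correct and follows essentially the same route as the paper: reduce to $\mathcal{A}_3^0$, observe that ${\rm Br}(\bfk)\oplus\ZZ/2\ZZ$ extends by the earlier ramification computation for $\alpha$ along the Torelli immersion, and kill the $\beta\cdot\H^1(\bfk,\ZZ/18\ZZ)$ summand via the localization sequence together with the compatibility of the boundary with the module product (R3f), using $\partial\beta=1$. The only detail the paper adds that you elide is the ${\rm char}(\bfk)=3$ case, where one replaces $18$ and $\beta$ by $2$ and $9\beta$ — your argument already contains the needed computation, so this is cosmetic.
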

\begin{proof}
It suffices to prove the statement for $\mathcal{A}_3^0$. We already know that the ${\rm Br}(\bfk) \oplus \ZZ/2\ZZ$ component of the Brauer group of $\mathcal{A}^0_e \smallsetminus \tau(\Hcal_3)$ belongs to the Brauer group of $\mathcal{A}_3^0$, so all that is left is to check the $\H^1(\bfk,\ZZ/18\ZZ)$ component. Assume for simplicity that ${\rm char}(\bfk) \neq 3$. Consider the exact sequence 
\[A^0(\mathcal{A}^0_3,\H_{\mu_{18}^{\vee}}) \to A^{0}(\mathcal{A}^0_3 \smallsetminus \tau(\Hcal_3),\H_{\mu_{18}^{\vee}}) \to A^{0}(\tau(\Hcal_3),\H_{\mu_{18}^{\vee}}).\]

By construction $\beta \in A^{0}(\mathcal{A}^0_3,\K^1_{18})$ maps to $1$ in $A^{0}(\tau(\Hcal_3),\K^0_{18})$, so by the compatibility of the boundary with the product \cite{Rost}*{R3f, p.330} we see that the subgroup $\beta \cdot \H^1(\bfk,\ZZ/18\ZZ) \subset A^0(\mathcal{A}^0_3,\H^2_{\mu_{18}^{\vee}})$ maps injectively to $\H^1(\bfk,\ZZ/18\ZZ) \subset A^0(\tau(\Hcal_3),\H^1_{\mu_{18}^{\vee}})$, proving that it does not belong to the Brauer group of $\mathcal{A}^0_3$.

If ${\rm char}(\bfk)=3$ we can just repeat the same reasoning using $2$ and $9\beta$ instead of $18$ and $\beta$.
\end{proof}

\begin{bibdiv}
	 \begin{biblist}
	%     \bib{AizAvn}{article}{
	%    author={Aizenbud, A.},
	%    author={Avni, A.},
	%    title={Pointwise surjective presentations of stacks},
	%    journal={Available on ar{X}iv:1912.11437 [Math:AG]},
	%    	    }
	    \bib{AntMeiEll}{article}{
	        author={Antieau, B.},
	        author={Meier, L.},
	        title={The Brauer group of the moduli stack of elliptic curves},
	        journal={Algebra \& Number Theory},
	        volume={14},
	        number={9},
	        year={2020},
	        pages={2295–2333}
	        }
         \bib{ArbCor87}{article}{
	        author={Arbarello, E.},
	        author={Cornalba, M.},
	        title={The Picard groups of the moduli spaces of curves},
	        journal={Topology},
	        volume={26},
	        number={2},
	        year={1987},
	        pages={153-171}
	        }

\bib{Col}{article}{
title={A simple proof of the theorem of Torelli based on Torelli's approach},
author={Collino, A.},
journal={Proc. Amer. Math. Soc.},
volume={100},
number={1},
year={1987},
pages={16-20}
}
\bib{CL}{article}{
title={The {C}how rings of the moduli spaces of curves of genus 7, 8, and 9},
author={Canning, S.},
author={Larson, H.},
journal={J. Algebraic Geom.},
volume={33},
year={2024},
pages={55-116}
}
\bib{DilPicPositive}{article}{
   author={Di Lorenzo, A.},
   title={Picard group of moduli of curves of low genus in positive characteristic},
   journal={Manuscripta Math.},
   volume={165},
   date={2021},
   pages={339-361},
   doi={10.1007/s00229-020-01212-3},
}
	\bib{DilFulVis}{article}{
		    author={Di Lorenzo, A.},
		    author={Fulghesu, D.},
                author={Vistoli, A.},
		    title={The integral Chow ring of the stack of smooth non-hyperelliptic curves of genus three},
		   journal={Trans. Amer. Math. Soc.},
		   date={2021},
		   volume={374},
		   pages={5583-5622}
		}
% 		\bib{DilK3}{article}{			
% 			author={Di Lorenzo, A.},
% 			title={Integral Picard group of the stack of quasi-polarized K3 surfaces of low degree},	
% 			journal={ar{X}iv:1910.08758 [math.AG]}		
% 		}
		\bib{DilPir}{article}{
		    author={Di Lorenzo, A.},
		    author={Pirisi, R.},
		    title={A complete description of the cohomological invariants of even genus Hyperelliptic curves},
		   journal={Documenta Mathematica},
		   date={2021},
		   volume={26},
		   pages={199-230}
		}
		\bib{DilPirBr}{article}{
		    author={Di Lorenzo, A.},
		    author={Pirisi, R.},
		    title={Brauer groups of moduli of hyperelliptic curves via cohomological invariants},
		   journal={Forum of Mathematics, Sigma},
		   date={2021},
		   volume={9},
		}
		\bib{DilPirRS}{article}{
		    author={Di Lorenzo, A.},
		    author={Pirisi, R.},
		    title={Cohomological invariants of root stacks and admissible double coverings},
		   journal={Canadian Journal of Mathematics},
		   date={2021},
		   doi={10.4153/S0008414X21000602},
		}
  \bib{DilPirPositive}{article}{
		    author={Di Lorenzo, A.},
		    author={Pirisi, R.},
		    title={Cohomological invariants and Brauer groups of algebraic stacks in positive characteristic},
		   journal={ar{X}iv:2207.08792 [math:AG]},
		   date={2022},
		}
		\bib{EF}{article}{
 			author={Edidin, D.},
 			author={Fulghesu, D.},
 			title={The integral Chow ring of the stack of hyperelliptic curves of
 				even genus},
 			journal={Math. Res. Lett.},
 			volume={16},
 			date={2009},
 			number={1},
 		}
        \bib{Dolg}{book}{
        author={Dolgachev, I. V.},
        title={Classical Algebraic Geometry: A Modern View},
        publisher={Cambridge University Press},
        place={Cambridge},
        year={2012},
        }
   
		\bib{EG}{article}{
			author={Edidin, D.},
			author={Graham, W.},
			title={Equivariant intersection theory (With an Appendix by Angelo Vistoli: The Chow ring of $\Mcal_2$)},
			journal={Invent. Math.},
			volume={131},
			date={1998},
			number={3},
			pages={595-634}
		}
 		\bib{EHKV}{article}{
 		author={Edidin, D.},
 		author={Hasset, B.},
 		author={Kresch, A.},
 		author={Vistoli, A.},
 		title={Brauer group and quotient stacks},
 		journal={Amer. J. Math.},
 		volume={123},
 		date={2001},
 		number={4},
 		pages={ 761--777},
 		}
   %      \bib{EKLV}{article}{
 		% author={Esnault, H.},
 		% author={Kahn, B.},
 		% author={Levine, M.},
 		% author={Viehweg, E.},
 		% title={The Arason invariant and mod 2 algebraic cycles},
 		% journal={J. Amer. Math. Soc.},
 		% volume={11},
 		% date={1998},
 		% pages={73-118},
 		% }
 	
%         \bib{FV}{article}{
% 		    author={Fulghesu, D.},
% 		    author={Viviani, F.},
% 		    title={The Chow ring of the stacks of cyclic covers of the projective line},
% 		    journal={Annales de l'Institut Fourier},
% 		    volume={61},
% 		    date={2011},
% 		    number={6},
% 		}
   %  \bib{Endl}{book}{
 		% author={Endler, O.},
 		% title={Valuation theory},
 		% series={Proceedings of the {E}dinburgh mathematical society},
 		% publisher={Springer-Verlag, Berlin–Heidelberg–New York},
 		% year={1972},
 		% }
\bib{Fab}{article}{
 author = {Carel Faber},
 journal = {Annals of Mathematics},
 number = {2},
 pages = {331--419},
 publisher = {Annals of Mathematics},
 title = {Chow Rings of Moduli Spaces of Curves I: The Chow Ring of $\overline{\mathcal{M}}_3$},
 volume = {132},
 year = {1990},
}
   	\bib{FriPirBrauer}{article}{
 		    author={Fringuelli, R.},
 			author={Pirisi, R.},
 			title={The Brauer group of the universal moduli space of vector bundles over smooth curves},
 			journal={Int. Math. Res. Not.},
 			doi={https://doi.org/10.1093/imrn/rnz300},
			date={2019},
 		}
   \bib{FriViv}{article}{
        author={Fringuelli, R.},
        author={Viviani, F.},
        title={On the Picard group scheme of the moduli stack of stable pointed curves},
        journal={ar{X}iv:2005.06920  [Math:AG]},
        year={2020}
   }
   \bib{Ful}{book}{
	Author = {Fulton, William},
	Edition = {Second},
	Pages = {xiv+470},
	Publisher = {Springer-Verlag},
	Series = {{A Series of Modern Surveys in Mathematics}},
	Title = {{Intersection theory}},
	Volume = {2},
	Year = {1998},
 }
	\bib{FulOl}{article}{
			author={Fulton, W.},
			author={Olsson, M.},
			title={The Picard group of $\mathscr{M}_{1,1}$},
			journal={Algebra Number Theory},
			volume={4},
			date={2010},
			number={1},
			pages={87-104},
		}
 	\bib{FulVis}{article}{
 		    author={Fulghesu, D.},
 			author={Vistoli, A.},
 			title={The Chow Ring of the Stack of Smooth Plane Cubics},
 			journal={Mich. Math. J.},
 			volume={67},
                pages={3-29},
			date={2018},
 		}
% 		\bib{Gab81}{collection}{
% 		author={Gabber, O.},
% 		title={Some theorems on {A}zumaya algebras},
% 		series={The Brauer group, (Sem. Les plans-sur-Bex, 1980), Lecture notes in Math.},
% 		volume={844},
% 		publisher={Springer, Berlin},
% 		year={1981},
% 		}

   %      \bib{GeisLev}{article}{
		 %    author={Geisser, T.},
		 %    author={Levine, M.},
		 %    title={The ${\rm K}$-theory of fields in characteristic $p$},
		 %    journal={Inventiones Mathematicae},
		 %    volume={139},
		 %    date={2000},
		 %    pages={459-493},
 		% }
 		% \bib{GilSam}{book}{
 		% author={Gille, P.},
 		% author={Szamuely, T.},
 		% title={Central simple algebras and {G}alois cohomology},
 		% series={Cambridge {S}tudies in {A}dvanced {M}athematics},
 		% volume={101},
 		% publisher={Cambridge {U}niversity {P}ress, {C}ambridge},
 		% year={2006},
 		% }
 		% \bib{GilHir}{article}{
 		% author={Gille, S.},
 		% author={Hirsch, C.},
 		% title={On the splitting principle for cohomological invariants of reflection groups},
 		% journal={Transformation Groups, DOI https://doi.org/10.1007/s00031-020-09637-6},
 		% year={2021},
 		% }
		\bib{GMS}{collection}{
			author={Garibaldi, S.},
			author={Merkurjev, A.},
			author={Serre, J.-P.},
			title={Cohomological invariants in Galois cohomology},
			series={University Lecture Series},
			volume={28},
			publisher={American Mathematical Society, Providence, RI},
			date={2003},
		}
  \bib{GS}{article}{
    author = {Geraschenko, A.},
    author = {Satriano, M.},
    title = {A bottom-up characterization of smooth {D}eligne-{M}umford stacks},
    journal = {International Mathematics Research Notices},
    volume = {2017},
    number = {21},
    pages = {6469-6483},
    year = {2016},
}
		% \bib{GrSu}{article}{
		% author={Gros, M.},
		% author={Suwa, N.},
		% title={La conjecture de Gersten pour les faisceaux de Hodge-Witt logarithmique},
		% journal={Duke Mathematical Journal},
		% volume={57},
		% year={1988},
		% number={2},
		% pages={615-628}
		% }
 		\bib{Guil}{article}{			
 			author={Guillot, P.},
 			title={Geometric methods for cohomological invariants},			
 			journal={Doc. Math.},
 			volume={12},			
 			date={2007},
 		}
            \bib{Harris}{article}{			
			author={Harris, J.},
			title={Galois groups of enumerative problems},		
                journal={Duke Math. J.},
			volume={46},
			number={4},
			date={1979},
			pages={685-724},
                doi={10.1215/S0012-7094-79-04635-0},
		}
            \bib{Hartshorne}{book}{
 		 author={Hartshorne, R.}, 		
 		 title={Algebraic Geometry},
 		 address={Berlin-{N}ew {Y}ork},
 		 publisher={{S}pringer-{V}erlag},
 		 year={1979},
              doi={10.1007/978-1-4757-3849-0},
 		 }
    \bib{GMB}{book}{
 		 author={Laumon, G.},
              author={Moret-Bailly, L.},
 		 title={Champs Alg{\'e}brique},
 		 address={Heidelberg},
 		 publisher={{S}pringer-{B}erlin},
 		 year={2000},
              doi={https://doi.org/10.1007/978-3-540-24899-6},
 		 }
		% \bib{Ill}{article}{			
		% 	author={Illusie, L.},
		% 	title={Complexe de de Rham-Witt et cohomologie cristalline},		journal={Annales scientifiques de l’\'E.N.S.},
		% 	series={4e série},
		% 	volume={12},
		% 	number={4},
		% 	date={1979},
		% 	pages={501-661},
		% }
		% \bib{Izh}{article}{			
		% 	author={Izhboldin, O.},
		% 	title={On the cohomology groups of the field of rational functions},			journal={Mathematics in St. Petersburg},
		% 	series={American Math. Soc. Transl. Ser. 2},
		% 	volume={174},			
		% 	date={1996},
		% 	pages={21-44},
		% }
		% \bib{IzhK}{article}{			
		% 	author={Izhboldin, O.},
		% 	title={On $p$-torsion in ${\rm K}^M_*$ for fields of characteristic $p$},
		% 	journal={Adv. Soviet Math. },
		% 	volume={4},			
		% 	date={1991},
		% 	pages={181-197},
		% }
\bib{Iza}{article}{
author={Izadi, E.},
title={The {C}how Ring of the Moduli Space of Curves of Genus 5},
booktitle={The Moduli Space of Curves},
year={1995},
publisher={Birkh{\"a}user Boston},
address={Boston, MA},
pages={267--303},
}
        \bib{KV}{article}{
        author={Kresch, A.},
        author={Vistoli, A.},
        title={On coverings of Deligne–Mumford stacks and surjectivity of the Brauer map},
        journal={Bull. Lon. Math. Soc.},
        volume={36},
        date={2004},
        number={2},
                }
% 		\bib{KL}{article}{
% 			author={Kleiman, S. L.},
% 			author={L\o nsted, K.},
% 			title={Basics on families of hyperelliptic curves},
% 			journal={Compositio Math.},
% 			volume={38},
% 			date={1979},
% 			number={1},
% 		}
% 		\bib{Lie}{article}{
% 		author={Lieblich, M.},
% 		title={Period and index in the Brauer group of an arithmetic surface},
% 		journal={ J. Reine Angew. Math.},
% 		volume={659},
% 		date={2011},
% 		note={With an appendix by Daniel Krashen},
% 		}
   %      \bib{Kato}{collection}{
 		% author={Kato, K.},
 		% title={Galois cohomology of complete discrete valuation fields, Algebraic K-theory, part II (Oberwolfach, 1980)},
 		% series={Lecture Notes in Math.},
 		% publisher={Springer},
 		% volume={967},
 		% date={1982},
 		% pages={215-238},
 		% }
 		% \bib{KaKu}{article}{			
			% author={Kato, K.},
			% author={Kuzumaki, T.},			
			% title={The dimension of fields and algebraic K-theory},			
			% volume={24},			
			% journal={J. Number Theory},
			% date={1986},
			% number={8},
			% pages={229–244},	}	
            \bib{KorSt03}{article}{			
			author={Korkmaz, M.},
			author={Stipsicz, A.I.},			
			title={The second homology groups of mapping class groups of oriented surfaces},			
			journal={Math. Proc. Cambridge Philos. Soc.},
                volume={134},
			number={3},
			pages={479-489},
                date={2003},
   }	
     \bib{LandProper}{article}{
author = {Landesman, A.},
title = {Properness of the Torelli map on compact type curves},
journal = {available at https://people.math.harvard.edu/~landesman/assets/properness-of-compact-type.pdf},
}
  \bib{LandHyp}{article}{
author = {Landesman, A.},
title = {Theta characteristics of an algebraic curve},
journal = {Trans. Amer. Math. Soc.},
pages = {354-378},
volume = {8},
year = {2021},
DOI={https://doi.org/10.1090/btran/64},
}
 		% \bib{Lou}{article}{
 		%     author={Lourdeaux, A.},
 		%     title={Degree 2 cohomological invariants of linear algebraic groups},
 		%     journal={Jour. Pur. Appl. Alg.},
 		%     volume={226},
 		%     number={10},
 		%     year={2022},
 		% }
		% 	\bib{VoeMot}{book}{
		% author={Mazza, C.},
		% author={Voevodsky, V.},
		% author={Weibel, C.},
		% title={Lectures on motivic cohomology},
		% series={Clay Mathematical Monographs},
		% publisher={Amer. Math. Soc.},
		% volume={2},
		% year={2006},
		% 	}
		% \bib{Mei}{article}{
		% author={Meier,L.},
		% title={Computing Brauer groups via coarse moduli},
		% journal={available at \\ http://www.staff.science.uu.nl/~meier007/CoarseBrauer.pdf}
		% }
		% \bib{Mil}{collection}{
		% 	author={Milne, J.},
		% 	title={\'Etale cohomology (PMS-33)},
		% 	series={Princeton MAthematical Series},
		% 	publisher={Princeton University Press,  Princeton University},
		% 	date={2016},
		% }
% 		\bib{Mor}{article}{
%    author={Morrow, M.},
%    title={$K$-theory and logarithmic {H}odge-{W}itt sheaves of formal schemes in
%    characteristic $p$},
%    language={English, with English and French summaries},
%    journal={Ann. Sci. \'{E}c. Norm. Sup\'{e}r. (4)},
%    volume={52},
%    date={2019},
%    number={6},
%    pages={1537-1601}
% }
		% \bib{PirCohHypEven}{article}{
		% 	author={Pirisi, R.},
		% 	title={Cohomological invariants of hyperelliptic curves of even genus},
		% 	journal={Algebr. Geom.},
		% 	volume={4},
		% 	date={2017},
		% 	number={4},
		% 	pages={424-443}}
         \bib{MGNoether}{article}{			
			author={Martins, R.V.},
			author={Gagliardi, E.M.},			
			title={Max Noether theorem for singular curves},			
			journal={Manuscripta Math.},
                volume={173},
			pages={1217–1232},
                date={2024},
   }	
  
   \bib{MumPic}{article}{
author = {Mumford, David},
journal = {{A}rithmetical {A}lgebraic {G}eometry, {P}roc. {C}onf. {P}urdue {U}niv.},
title = {Picard groups of moduli problems},
pages = {33-81},
year = {1965},
} 
\bib{MumEnum}{article}{
author={Mumford, David},
title={Towards an Enumerative Geometry of the Moduli Space of Curves},
bookTitle={Arithmetic and Geometry: Papers Dedicated to I.R. Shafarevich on the Occasion of His Sixtieth Birthday. Volume II: Geometry},
year={1983},
publisher={Birkh{\"a}user Boston},
address={Boston, MA},
pages={271--328},
}
  \bib{MumSpin}{article}{
author = {Mumford, David},
journal = {Annales scientifiques de l'École Normale Supérieure},
language = {eng},
number = {2},
pages = {181-192},
publisher = {Elsevier},
title = {Theta characteristics of an algebraic curve},
volume = {4},
year = {1971},
}
\bib{Oort}{article}{
			author={Oort, F.},
			title={Singularities of the moduli scheme for curves of genus three},			journal={Indagationes Mathematicae (Proceedings)},
			volume={78},
                number={2},
			date={1975},
			pages={170-174}}

		\bib{PirAlgStack}{article}{			
			author={Pirisi, R.},
			title={Cohomological invariants of algebraic stacks},			
			journal={Trans. Amer. Math. Soc.},
			volume={370},			
			date={2018},			
			number={3},
			pages={1885-1906}}
		\bib{PirCohHypThree}{article}{
			author={Pirisi, R.},
			title={Cohomological invariants of hyperelliptic curves of genus 3},			
			journal={Doc. Math.},
			volume={23},
			date={2018},
			pages={969-996}}
   \bib{PoonHyp}{article}{
			author={Poonen, B.},
			title={Varieties without extra automorphisms III: hypersurfaces},			
			journal={Finite Fields Appl.},
			volume={11},
                number={2},
			date={2005},
			pages={230-268}}
   % \bib{Popp}{article}{
			% author={Popp, H.},
			% title={The singularities of the moduli scheme of curves},			
			% journal={Journal of number theory},
			% volume={1},
			% date={1969},
			% pages={90-107}}
		% \bib{ReVi}{article}{			
		% 	author={Reichstein, Z.},
		% 	author={Vistoli, A.},
		% 	title={Essential dimension of finite groups in prime characteristic},	journal={C. R. Math. Acad. Sci. Paris},
		% 	volume={356},
		% 	date={2018},			
		% 	number={13},
		% 	pages={463–467}}	
		\bib{Rost}{article}{			
			author={Rost, M.},
			title={Chow groups with coefficients},			
			journal={Doc. Math.},
			volume={1},			
			date={1996},			
			number={16},
			pages={319-393}}
		% \bib{Salt}{article}{			
		% 	author={Saltman, D.J.},
		% 	title={Noether’s problem over an algebraically closed field},		
		% 	journal={Invent. Math.},
		% 	volume={77},			
		% 	date={1984},			
		% 	pages={71-84}	}	
            \bib{Sch05}{article}{			
			author={Schr{\"o}er, S.},
			title={Topological methods for complex-analytic Brauer groups},			
			journal={Topology},
			volume={44},			
			date={2005},			
			number={5},
			pages={875–894},
   }
% 		\bib{Sch}{article}{
%               author={Schur, J.},
%               title={\"{U}ber die Darstellung der symmetrischen und der alternierenden
%               Gruppe durch gebrochene lineare Substitutionen},
%               language={German},
%               journal={J. Reine Angew. Math.},
%               volume={139},
%               date={1911},
%             }
   %      \bib{SerGal}{collection}{
			% author={Serre, J.-P.},
			% title={Galois Cohomology},
			% publisher={Springer},
			% date={2002},
			% }
   \bib{ShiBr}{article}{			
			author={Shin, M.},
			title={The Cohomological Brauer Group of a Torsion Gm-Gerbe},	
			journal={Int. Math. Res. Not.},
			volume={2021},
			date={2021},			
			number={19},
			pages={14480–14507},
   DOI={https://doi.org/10.1093/imrn/rnz235}}
	    \bib{Shi}{article}{			
			author={Shin, M.},
			title={The Brauer group of the moduli stack of elliptic curves over algebraically closed fields of characteristic 2},	
			journal={J. Pure Appl. Algebra},
			volume={223},
			date={2019},			
			number={5},
			pages={1966-1999}}
		% \bib{Sil}{book}{
		% author={Silverman, J.H.},
		% title={The arithmetic of elliptic curves},
		% series={Graduate text in mathematics},
		% number={106},
		% publisher={Springer {N}ew {Y}ork, {NY}},
  %       year={2009},
		% }	
		% \bib{StPr}{article}{
		%     label={Stacks},
		%     title={{S}tacks {P}roject},
  %           author={The {S}tacks  {P}roject {A}uthors},
  %           eprint = {https://stacks.math.columbia.edu},
  %           date = {2022}
  %       }	
         \bib{Tot}{article}{
         author = {Totaro, B.},
         title = {The Chow Ring of a Classifying Space},
         journal={Algebraic K-Theory},
         booktitle = {Proc. Symposia in Pure Math. 67},
         year = {1999},
         pages = {249-281},
         }
        % \bib{Totp}{article}{
        % author = {Totaro, B.},
        % title = {Cohomological invariants in positive characteristic},
        % journal={International Mathematical Research Letters},
        % volume={2022},
        % number={9},
        % year = {2022},
        % pages = {7152-7201},
        % }
        % \bib{TotSp}{article}{
        % author = {Totaro, B.},
        % title = {Essential dimension of the spin groups in characteristic 2},
        % journal={Commentarii Mathematici Helvetici},
        % volume={94},
        % year = {2019},
        % pages = {1-20},
        % }
% 		\bib{Voe}{article}{
% 		    author={Voevodsky, V.},
% 		    title={On motivic cohomology with $Z/l$-coefficients},
% 		    journal={Annals of Mathematics},
% 		    volume={174},
% 		    pages={401-438},
% 		    date={2011},
% 		}	
	
%		\bib{Vis}{article}{			
%			author={Vistoli, A.},
%			title={The Chow ring of $\cl{M}_2$. Appendix to "Equivariant intersection theory"},			
%			journal={Invent. Math.},
%			volume={131},			
%			date={1998}
%			number={3}	}
		% \bib{Wit}{article}{
 	% 		author={Witt, E.},
 	% 		title={Theorie der quadratischen Formen in beliebigen K\"orpern},
 	% 		language={German},
 	% 		journal={J. Reine Angew. Math.},
 	% 		volume={176},
 	% 		date={1937},
 	% 		pages={31-44}
		
 	% 	}
\bib{Seki}{article}{
 		    author={Sekiguchi, T.},
		    title={On the fields of rationality for curves and their Jacobian varieties},
		    journal={Nagoya Math. J.},
		    volume={88},
		    pages={197-212},
		    date={1982},
		}			
		
	\end{biblist}
\end{bibdiv}
\end{document}